\newcolumntype{C}{>{$}c<{$}} 
\newtheorem{theorem}{Theorem}[section]
\theoremstyle{plain}
\newtheorem{corollary}[theorem]{Corollary}
\newtheorem{lemma}[theorem]{Lemma}
\newtheorem{proposition}[theorem]{Proposition}
\numberwithin{equation}{section}
\theoremstyle{definition}
\newtheorem{definition}[theorem]{Definition}
\newtheorem{remark}[theorem]{Remark}
\let\@setaddresses\relax
\begin{document}
	
	
	\title[Groups with triality and code loops]{Construction of Groups with Triality \\ and their Corresponding Code Loops}
	
	
	
	
	\author[R.~Miguel Pires]{Rosemary Miguel Pires}
	\thanks{Departamento de Matemática, Instituto de Ciências Exatas,
		Universidade Federal Fluminense, Volta Redonda, Rio de Janeiro, 27213-145, Brazil.
		\textit{Email}: \texttt{rosemarypires@id.uff.br}.}
	
	\author[A.~Grishkov]{Alexandre Grishkov}
	\thanks{Departamento de Matemática, Universidade de São Paulo, 
		Caixa Postal 66281, São Paulo, 05311-970, Brazil, 
		and Omsk State University n.a.\ F.\ M.\ Dostoevskii, Omsk, Russia.
		\textit{Email}: \texttt{grishkov@ime.usp.br}.}

	\author[R.L.~Rodrigues]{Rodrigo Lucas Rodrigues}
	\thanks{Departamento de Matemática, Universidade Federal do Ceará, Campus do Pici, Bloco 914, 60455-760, Fortaleza, Brazil.
		\textit{Email}: \texttt{rodrigo@mat.ufc.br}.}
	
	\author[M.~Rasskazova]{Marina Rasskazova}
	\thanks{Centro de Matem\'atica, Computa\c c\~ao  e Cogni\c c\~ao, Universidade Federal do ABC, Santo André, S\~ao Paulo, 09280-560, Brazil, and Siberian State Automobile and Highway University,  Omsk, Omsk Oblast, 644080, Rússia
		\textit{Email}: \texttt{marinarasskazova@yandex.ru}.}

	\begin{abstract}

		We generalize the global construction of code loops introduced by Nagy~\cite{Nagy}, which is based on the connection between Moufang loops and groups with triality.
		This follows from the construction of a nilpotent group \( G_n \) of class~3 with triality and \(2n\) generators, based on embeddings of \( G_n \) into direct products of copies of \( G_3 \).
		In the finite case, where \( G_n \) is a group such that \( |G_n| = 2^{4n+m} \) with \( n \geq 3 \) and \( m = 3 {n \choose 2} + 2 {n \choose 3} \), we prove that the corresponding Moufang loop is the free loop \( \mathcal{F}_n \) with \( n \) generators in the variety \( \mathcal{E} \) generated by code loops. 
		The result depends on a construction similar to that of \( G_n \), namely, embedding \( \mathcal{F}_n \) into direct products of copies of \( \mathcal{F}_3 \), the free code loop associated with \( G_3 \).
		
	\end{abstract}
	
	\keywords{groups with triality, Moufang loops}
	\subjclass[2020]{Primary: 20N05. Secondary: 20B25.}
	
	\maketitle

	\section{Introduction}  
	A set $L$ with a binary operation $(x,y) \mapsto x\cdot y = xy$ is called a \textit{loop} if the equation $xy = z$ can be solved uniquely if two of the variables are given and there exists an identity element $1$ with $1x = x = x1$ for every $x \in L$. We use the notation $x^{-1}$ to denote the unique 2-sided inverse of $x$. 
	
	A loop $L$ is said to be Moufang if it satisfies any of the three (equivalent) Moufang identities:
	\begin{table}[h]
		\centering
		\begin{tabular}{ccc}
			$((xy)x)z = x(y(xz)),$    &  $((xy)z)y = x(y(zy)),$ & $(xy)(zx) = (x(yz))x,$
		\end{tabular}
	\end{table}
	\\
	the left Moufang identity, the right Moufang identity, and the middle Moufang identity, respectively. For basic definitions and properties of Moufang loops, we refer the reader to \cite{Bruck}.

	In a loop $L$, the right and left translation maps $R_x$ and $L_x$, $x \in L$, are defined by 
	$a R_x = ax$ and $a L_x = xa$ for all $a \in L$. The set of left and right translation maps generates a group ${\rm Mlt}(L)$, called the multiplication group of $L$, which is a subgroup of the symmetric group on the set $L$. We denote it by
	${\rm Mlt}(L) = \langle L_x, R_x \ | \ x \in L \rangle.$
	
	Let $a, b$ and $c$ three elements of a loop $L$. The (loop) commutator of $a$ and $b$ is the unique element $(a, b)$ of $L$ which satisfies $ab = (ba) (a, b)$ and the (loop) associator of $a, b$ and $c$ is the unique element $(a, b, c)$ of $L$ which satisfies $(ab)c = {a(bc)}(a,b,c)$. The nucleus of $L$ is
	$$N(L) = \{a \in L \ | \ (a, x, y) = (x, y, a) = (x, a, y) = 1, \ {\rm for} \ {\rm all} \ x, y \in L \}.$$
	and the center of $L$ is
	$$Z(L) = \{x \in N(L) \ | \ (a, x) = 1, \ {\rm for} \ {\rm all} \ a \in L  \}.$$

	Glauberman \cite{Glauberman} pointed out that for Moufang loops $L$ with a trivial nucleus, the multiplication group ${\rm Mlt}(L)$ admits a
	certain dihedral group of automorphisms of order 6. 
	In 1978, Doro \cite{Doro} introduced the concept of a group with triality. 
	We call $G$ a group with triality $S$ if $G$ is a group, 
	$S$ is a subgroup of the automorphism group of $G$ such that $S = \langle \sigma, \rho \ | \ \sigma^2 = \rho^3 = (\sigma \rho)^2 = 1 \rangle \cong S_3$ and for all $g \in G$ the triality identity $[g, \sigma] [g, \sigma]^{\rho} [g, \sigma]^{\rho^2} = 1$ holds, where $[g, \sigma] = g^{-1} g^{\sigma}$. He also proved that from any group $G$ with triality $S$, it is possible to construct a Moufang loop $L$ in which the group $G$ has a permutation representation isomorphic to ${\rm Mlt}(L)$; every Moufang loop arises in this way from a group with triality; associative Moufang loops correspond to wreath products or to closely related constructions, while simple non-associative Moufang loops correspond to simple groups with triality. Since the discovery of
	the relation between groups with triality and Moufang loops, there appeared new ways of constructing Moufang loops using groups
	with triality.
	
	In \cite{Paige}, Paige defines, for each finite field $GF(q)$, a finite simple Moufang loop $M(q)$ which is nonassociative. In 1987, Liebeck \cite{Liebeck} proved that if $M$ is a finite simple Moufang loop, then either $M$ is associative (and hence is a finite simple group), or $M$ is isomorphic to one of the loops $M(q)$. He emphasized that the existence of the simple Moufang loops $M(q)$ is of course intimately
	related to the fact that the simple groups ${\rm Mlt}(M(q))$ are groups with triality, and he used the classification of finite simple
	groups to prove that the only finite simple groups with triality are the simple groups ${\rm Mlt}(M(q))$.
	
	Hall and Nagy \cite{HallNagy} extended the partial relationship between Moufang loops and groups with triality, given by Doro, by showing that the following concepts are equivalent: groups with triality and trivial center, Moufang 3-nets, latin square designs in which every point is the center of an automorphism, and isotopy classes of Moufang loops. Using this new approach, they also gave a simple proof to a theorem due to Doro.
	
	Grishkov and Zavarnitsine (\cite{GrishkovZavarnitsine05}, \cite{GrishkovZavarnitsine06}) determined the correspondence between the maximal subloops of a given Moufang loop and some subgroups of the corresponding group with triality, and 
	described all possible, in a sense, groups with triality associated with a given Moufang loop, introducing several universal groups with triality and discussing their properties.  They also presented another way to construct a Moufang loop corresponding to a given group with triality.
	
	Nagy \cite{Nagy} used that Moufang loops can be
	equivalently given by the specific group theoretical concept of groups with triality to present a global construction for the loop. 
	Gagola III \cite{GagolaIII10} obtained information about the smallest group with triality associated with a finite Moufang loop that is solvable.
	
	In 2012, 2013 and 2016, groups with triality were used by Gagola III \cite{GagolaIII12} to show that any Moufang loop obtained by a cyclic extension of an abelian group of odd order, by a cyclic group with an order coprime to 3, is necessarily a group; and by 
	Grishkov and Zavarnitsine \cite{GrishkovZavarnitsine13} 
	to construct a series of nonassociative Moufang loops, where certain members of these series contain an abelian normal subloop with the corresponding quotient being a cyclic group. In particular, they gave a new series of examples of finite abelian-by-cyclic Moufang loops, the smallest having order $3.2^6$ and some of the examples were shown to be embeddable into a Cayley algebra. Furthermore, 
	they obtained some general multiplication formulas in Moufang loops, constructed Moufang extensions of abelian groups, and described the structure of minimal extensions for finite simple Moufang loops over abelian groups.

	A Moufang loop $L$ such that $L/A$ is an elementary abelian $2$-group for some central subgroup $A$ of order $2$ is called a code loop. The notion and construction of code loops were introduced by Griess \cite{Griess}. Chein and Goodaire \cite{CG} proved that finite code loops may be characterized as Moufang loops $L$ such that $|L^2| \leq 2$. They also proved that these code loops have a unique nonidentity commutator, a unique nonidentity associator, and a unique nonidentity square. In \cite{GP}, Grishkov and R. M. Pires presented a classification of code loops with $3$ and $4$ generators and the corresponding groups of automorphisms. For this purpose they introduced the notion of a \emph{characteristic vector}. More precisely, if  $\{a_1,a_2,a_3\ldots,a_n\}$ is a mininal set of generators for a code loop $L$, then the characteristic vector is defined by $\overline{\lambda}=(\lambda_1,\ldots,\lambda_n,\lambda_{1,2}, \ldots,\lambda_{n-1,n},\lambda_{1,2,3},\ldots,\lambda_{n-2,n-1,n})$ where $\lambda_i$, $\lambda_{i,j}$, $\lambda_{i,j,k}\in\{0,1\}$, with $i<j<~k$, and the values of these parameters are determined by the relations $a_i^2=(-1)^{\lambda_i}$, $[a_i,a_j]=(-1)^{\lambda_{i,j}}$ and $(a_i,a_j,a_k)=(-1)^{\lambda_{i,j,k}}$.
	
	Furthermore, in \cite{GP}, the authors constructed a free Moufang loop in the variety generated by code loops and proved that code loops with $n$ generators can be characterized as a homomorphic image of a free Moufang loop with $n$ generators.

	The above-mentioned articles established the connection between groups with triality and Moufang loops, as well as several important constructions and classifications. 
	Our purpose in this paper is to present a construction of a nilpotent group \( G_n \) of class~3 with triality and \( 2n \) generators, generalizing the case of \( G_3 \). 
	The group \( G_n \) is defined as an infinite nilpotent group of class~3. In the finite case, obtained by adding further relations, we determine its order and show that the corresponding Moufang loop is precisely the free Moufang loop \( \mathcal{F}_n \) in the variety \( \mathcal{E} \) generated by code loops. This work generalizes the construction introduced by Nagy~\cite{Nagy}: while Nagy’s approach produces, for each given code, a specific group with triality whose associated loop is the corresponding code loop, our construction provides a universal group \( G_n \) whose associated loop \( \mathcal{F}_n \) is the free object in the same variety, from which all code loops of rank~\( n \) arise as quotients.
	
	The paper is organized as follows. 
	In Section~\ref{section:group.with.triality}, we construct a nilpotent group \( G_n \) of class~3 with triality and \( 2n \) generators, and show how \( G_n \) can be embedded into a direct product of copies of \( G_3 \). 
	In Section~\ref{section:variety.code.loops}, we review some concepts and results related to the variety \( \mathcal{E} \) generated by code loops. 
	In Section~\ref{section:loop.corresponding}, we present the Moufang loop corresponding to \( G_n \) in the finite case, and in Section~\ref{section:comparison}, we establish its relation to the group introduced by Nagy~\cite{Nagy}, showing that our approach generalizes his result.
	

	
	
	

	\section{A group with triality}\label{section:group.with.triality}
	
	In this section, we provide a construction of a nilpotent group of class~3 with triality. 
	For a group \( G \) and elements \( a, b \in G \), the commutator is defined by 
	\([a,b] = a^{-1} b^{-1} a b\). 
	The derived subgroup (or commutator subgroup) of \( G \) is denoted by 
	\([G,G] = \langle [a,b] : a,b \in G \rangle.\)
	
	Recall that a group \( G \) is said to be \emph{nilpotent of class~\(n\)} 
	if its lower central series
	\[
	\gamma_1(G) = G, \qquad 
	\gamma_{i+1}(G) = [\gamma_i(G), G] \quad (i \ge 1)
	\]
	satisfies \( \gamma_{n+1}(G) = 1 \) but \( \gamma_n(G) \ne 1 \).

	Let $G_n$ be a nilpotent group of class $3$ with $2n$ generators $\left\{a_1,\ldots,a_n, b_1, \ldots,b_n \right\}$.  For $1 \leq i,j \leq n$, we denote 
	$u_{ij} := [a_i,a_j]$, $v_{ij} := [b_i,b_j]$ and $p_{ij} := [a_i,b_j]$.
	The defining relations of $G_n$, valid for all $1 \leq i,j,k \leq n$, are the following:

	\begin{itemize}
		\item[$(1)$] \refstepcounter{enumi} \label{rel.1} 
		$[a_i, b_i] = 1$, $[a_i^2, G_n] = [b_i^2, G_n] = 1$. 
		
		\item[$(2)$] \refstepcounter{enumi} \label{rel.2} 
		$u_{ij} = u_{ji}$, $v_{ij} = v_{ji}$, $u_{ij}^2 = v_{ij}^2 = 1$, for $i < j$.
		
		\item[$(3)$] \refstepcounter{enumi} \label{rel.3} 
		$[u_{ij}, G_n] = [v_{ij}, G_n] = 1$, for $i < j$.
		
		\item[$(4)$] \refstepcounter{enumi} \label{rel.4} 
		$p_{ij} = p_{ji}$, $p_{ij}^2 = 1$, for $i < j$.
		
		\item[$(5)$] \refstepcounter{enumi} \label{rel.5} 
		$[p_{ij}, a_k] := z_{ijk}$, and $z_{ijk} = z_{jik} = z_{kij}$,  for $|\{i,j,k\}|=3$.
		
		\item[$(6)$] \refstepcounter{enumi} \label{rel.6} 
		$[p_{ij}, b_k] := t_{ijk}$, and $t_{ijk} = t_{jik} = t_{kij}$, for $|\{i,j,k\}|=3$.
		
		\item[$(7)$] \refstepcounter{enumi} \label{rel.7} 
		$t_{ijk}^2 = z_{ijk}^2 = 1$, for $|\{i,j,k\}|=3$. 
		
		\item[$(8)$] \refstepcounter{enumi} \label{rel.8} 
		$[z_{ijk},G_n]=[t_{ijk},G_n]=1$, for $|\{i,j,k\}|=3$. 
		
	\end{itemize}
	
	
	
	
	Observe that, in fact, by construction and using relations above, $G_n$ is nilpotent of class $3$.
	

	In what follows, we focus on the case $n=3$ and recall a normal form for the elements of $G_3$.

	\begin{remark}\label{rem.normalform}
		Let $G = G_3$ be the nilpotent group generated by $\left\{a_1,a_2, a_3, b_1, b_2, b_3 \right\}$ satisfying the relations \ref{rel.1} through \ref{rel.8} above. Note that any element $x \in G$ can be written as 
		\begin{equation} 
			x = a_{1}^{\alpha_1}a_{2}^{\alpha_2}a_{3}^{\alpha_3} b_{1}^{\alpha_4}b_{2}^{\alpha_5}b_{3}^{\alpha_6} u_{12}^{\alpha_{12}^{1}}u_{13}^{\alpha_{13}^{1}}u_{23}^{\alpha_{23}^{1}
			}v_{12}^{\alpha_{12}^{2}}v_{13}^{\alpha_{13}^{2}}v_{23}^{\alpha_{23}^{2}}p_{12}^{\alpha_{12}^{3}}p_{13}^{\alpha_{13}^{3}}p_{23}^{\alpha_{23}^{3}} z_{123}^{\alpha_{123}^{1}} t_{123}^{\alpha_{123}^{2}},
		\end{equation}
		where $\alpha_l \in \mathbb{Z}$ for $l=1,\dots,6$, and the other exponents are equal to $0$ or $1$.\\
		
		Moreover, the element $\mu = u_{12}^{\alpha_{12}^{1}}u_{13}^{\alpha_{13}^{1}}u_{23}^{\alpha_{23}^{1}
		}v_{12}^{\alpha_{12}^{2}}v_{13}^{\alpha_{13}^{2}}v_{23}^{\alpha_{23}^{2}}z_{123}^{\alpha_{123}^{1}} t_{123}^{\alpha_{123}^{2}}$ belongs to the center $Z(G)$. Therefore, the element $x$ can be rewritten as  
		$x = g\mu$, with $$g = a_{1}^{\alpha_1}a_{2}^{\alpha_2}a_{3}^{\alpha_3} b_{1}^{\alpha_4}b_{2}^{\alpha_5}b_{3}^{\alpha_6}p_{12}^{\alpha_{12}^{3}}p_{13}^{\alpha_{13}^{3}}p_{23}^{\alpha_{23}^{3}}.$$

	\end{remark}

	Consider $x = g\mu \in G$, with $g$ and $\mu$ defined as in Remark~\ref{rem.normalform}. 
	For convenience, we introduce the following matrix notation to represent $g$:
	
	$$g :=\begin{pmatrix}
		\alpha_1 & \alpha_2 & \alpha_3 \\ 
		\alpha_4 & \alpha_5 & \alpha_6 \\
		\alpha_{12}^3 & \alpha_{13}^3 & \alpha_{23}^3
	\end{pmatrix}.$$

	Using the matrix notation above, the product of two elements in $G$ can be described as follows.
	\begin{lemma} \label{lemma.product}
		Let $G$ be a group as stated in Remark $\ref{rem.normalform}$, and let $g_1, g_2 \in G$ be given by 
		\[
		g_1 = 
		\begin{pmatrix}
			\alpha_1 & \alpha_2 & \alpha_3 \\ 
			\alpha_4 & \alpha_5 & \alpha_6 \\
			\alpha_{12}^3 & \alpha_{13}^3 & \alpha_{23}^3
		\end{pmatrix}, 
		\qquad
		g_2 = 
		\begin{pmatrix}
			\beta_1 & \beta_2 & \beta_3 \\ 
			\beta_4 & \beta_5 & \beta_6 \\
			\beta_{12}^3 & \beta_{13}^3 & \beta_{23}^3
		\end{pmatrix},
		\]
		where $\alpha_i, \beta_i \in \mathbb{Z}$ for $i = 1,\dots,6$, and the other exponents are equal to $0$ or $1$. 
		Then
		\[
		g_{1} g_{2} = 
		\begin{pmatrix}
			\alpha_1 + \beta_1 & \alpha_2 + \beta_2 & \alpha_3 + \beta_3 \\ 
			\alpha_4 + \beta_4 & \alpha_5 + \beta_5 & \alpha_6 + \beta_6 \\
			\alpha_{12}^{3} + \beta_{12}^{3} + \alpha_4 \beta_2 + \alpha_5 \beta_1 & 
			\alpha_{13}^{3} + \beta_{13}^{3} + \alpha_4 \beta_3 + \alpha_6 \beta_1 & 
			\alpha_{23}^{3} + \beta_{23}^{3} + \alpha_5 \beta_3 + \alpha_6 \beta_2
		\end{pmatrix} \mu,
		\]
		where addition in the third row is taken modulo $2$, and  $\mu \in Z(G)$ is given by
		\[
		\mu \;=\; 
		u_{12}^{\alpha_2 \beta_1}\,u_{13}^{\alpha_3 \beta_1}\,u_{23}^{\alpha_3 \beta_2}\,
		v_{12}^{\alpha_5 \beta_4}\,v_{13}^{\alpha_6 \beta_4}\,v_{23}^{\alpha_6 \beta_5}\,
		z_{123}^{\varepsilon_z}\, t_{123}^{\varepsilon_t},
		\]
		where
		\[
		\begin{aligned}
			\varepsilon_z &:= \alpha_{23}^{3}\beta_1 + \alpha_{13}^{3}\beta_2 + \alpha_{12}^{3}\beta_3
			+ \alpha_5 \beta_1 \beta_3 + \alpha_6 \beta_1 \beta_2 + \alpha_4 \beta_2 \beta_3,\\
			\varepsilon_t &:= \alpha_{23}^{3}\beta_4 + \alpha_{13}^{3}\beta_5 + \alpha_{12}^{3}\beta_6
			+ \alpha_4 \alpha_5 \beta_3 + \alpha_5 \alpha_6 \beta_1 + \alpha_4 \alpha_6 \beta_2 \\
			&\quad + \alpha_6 \beta_2 \beta_4 + \alpha_5 \beta_3 \beta_4 + \alpha_6 \beta_1 \beta_5
			+ \alpha_4 \beta_3 \beta_5 + \alpha_5 \beta_1 \beta_6 + \alpha_4 \beta_2 \beta_6.
		\end{aligned}
		\]
	\end{lemma}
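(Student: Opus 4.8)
The plan is to derive the formula by a Hall-style commutator collection, rewriting the product $g_1g_2$ into the normal form of Remark~\ref{rem.normalform} and reading off the exponents. Before collecting, I would pin down the complete commutator table on the generators. Relations~(1)--(4) already give $[a_i,a_j]=u_{ij}$, $[b_i,b_j]=v_{ij}$, $[a_i,b_j]=p_{ij}$ (with $[a_i,b_i]=1$ and $p_{ij}=p_{ji}$), all of $u_{ij},v_{ij}$ central and $u_{ij}^2=v_{ij}^2=p_{ij}^2=1$, while relations~(5)--(8) give the central weight-$3$ generators $[p_{ij},a_k]=z_{ijk}$ and $[p_{ij},b_k]=t_{ijk}$, symmetric in their indices and of order~$2$. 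The one point that is not immediate, and which I would record first, is that $p_{ij}$ commutes with $a_i,a_j,b_i,b_j$: expanding $[x^2,g]=[x,g]^2[[x,g],x]$ in a class-$3$ group and using that $a_i^2,b_i^2$ are central (relation~(1)) together with $p_{ij}^2=1$ (relation~(4)) forces $[p_{ij},a_i]=[p_{ij},a_j]=[p_{ij},b_i]=[p_{ij},b_j]=1$. Consequently $z_{ijk},t_{ijk}$ are nontrivial only when $|\{i,j,k\}|=3$, so in $G_3$ only $z_{123},t_{123}$ survive, and moreover $[p_{ij},p_{kl}]=1$ since this commutator lies in $\gamma_4(G)=1$.

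Writing $g_r=A_rB_rP_r$ where $A_r,B_r,P_r$ denote the $a$-, $b$- and $p$-blocks, I would collect $g_1g_2=A_1B_1P_1A_2B_2P_2$ in three stages, extracting the central factors $u,v,z,t$ as they appear. In Stage~I, I commute $P_1$ rightward past $A_2B_2$; since $[p_{ij},a_k]=z_{ijk}$ and $[p_{ij},b_k]=t_{ijk}$, this yields exactly $z_{123}^{\alpha_{23}^3\beta_1+\alpha_{13}^3\beta_2+\alpha_{12}^3\beta_3}$ and $t_{123}^{\alpha_{23}^3\beta_4+\alpha_{13}^3\beta_5+\alpha_{12}^3\beta_6}$, the linear parts of $\varepsilon_z,\varepsilon_t$. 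In Stage~II, I commute $A_2$ leftward past $B_1$ and then past $A_1$: the relations $[b_i,a_k]=p_{ik}$ ($i\neq k$) create induced $p$-factors $p_{ik}^{\alpha_{b_i}\beta_{a_k}}$ summing to the bilinear corrections $\alpha_4\beta_2+\alpha_5\beta_1$, $\alpha_4\beta_3+\alpha_6\beta_1$, $\alpha_5\beta_3+\alpha_6\beta_2$ of the third row, while $A_2$ crossing $A_1$ produces the block $u_{12}^{\alpha_2\beta_1}u_{13}^{\alpha_3\beta_1}u_{23}^{\alpha_3\beta_2}$. Two weight-$3$ effects occur simultaneously: an induced $p$ created by $a_k$ may still have to cross a $b$ of $B_1$ lying to its right, giving the $t$-terms $\alpha_4\alpha_5\beta_3+\alpha_5\alpha_6\beta_1+\alpha_4\alpha_6\beta_2$, and a later $a_l$ of $A_2$ must cross a $p$ induced by an earlier $a_k$, giving the $z$-terms $\alpha_5\beta_1\beta_3+\alpha_6\beta_1\beta_2+\alpha_4\beta_2\beta_3$.

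In Stage~III, I push the accumulated induced $p$-block rightward past $B_2$ to merge it with $P_1P_2$, and collect $B_1B_2$. Crossing $B_2$ contributes the remaining $t$-terms $\alpha_6\beta_2\beta_4+\alpha_5\beta_3\beta_4+\alpha_6\beta_1\beta_5+\alpha_4\beta_3\beta_5+\alpha_5\beta_1\beta_6+\alpha_4\beta_2\beta_6$, while $B_1B_2$ yields $v_{12}^{\alpha_5\beta_4}v_{13}^{\alpha_6\beta_4}v_{23}^{\alpha_6\beta_5}$. Finally I add all $p$-exponents modulo~$2$ (relation~(4)) and gather the central factors, using $z_{123}^2=t_{123}^2=1$ (relation~(7)) to reduce $\varepsilon_z,\varepsilon_t$ modulo~$2$; since $z,t$ are central and symmetric (relations~(5),(6),(8)), the contributions of the three stages combine into the stated $\varepsilon_z$ and $\varepsilon_t$, and the first two rows simply add the $a$- and $b$-exponents.

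I expect the main obstacle to be the bookkeeping of the induced $p$-factors across Stages~II and~III. Because these factors are created in the interior of the word and are themselves non-central, each must be tracked as it crosses the remaining $a$'s of $A_2$ (producing $z$) and the $b$'s of $B_1$ and $B_2$ (producing $t$); the collection order within $A_2$ must be fixed once and for all (say $a_1,a_2,a_3$) so that every weight-$3$ monomial is counted exactly once. Verifying that the three stages reproduce precisely the twelve monomials of $\varepsilon_t$ and the six of $\varepsilon_z$, with nothing omitted or doubled, is the only delicate point; the additive first two rows, the bilinear third row, and the $u$- and $v$-blocks are then routine.
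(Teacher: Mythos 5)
Your proposal is correct and follows the same route the paper intends: the paper's proof consists of the single remark that the result ``follows by applying the defining commutator and the square relations'' with the long computation omitted, and your three-stage collection (move $P_1$ right past $A_2B_2$, collect $A_2$ leftward through $B_1$ and $A_1$, then push the induced $p$-block right through $B_2$) is exactly that computation made explicit, with the resulting monomials matching $\varepsilon_z$ and $\varepsilon_t$ term by term. The auxiliary fact you isolate first --- that $[p_{ij},a_i]=[p_{ij},a_j]=[p_{ij},b_i]=[p_{ij},b_j]=1$, deduced from $[a_i^2,G]=[b_i^2,G]=1$ and $p_{ij}^2=1$ via $[x^2,g]=[x,g]^2\,[[x,g],x]$ --- is the one step the paper leaves implicit, and it is precisely what kills the binomial correction terms so that the collection closes with the stated exponents.
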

	
	\begin{proof}
		The result follows by applying the defining commutator and the square relations of $G$. 
		Since the calculation is straightforward, but long, we omit the details. 
	\end{proof}

	We now introduce the group $\mathbb{P}_n$, 
	which gives a natural coordinate description of $G_n$. 
	This will allow us to prove that $G_n$ and $\mathbb{P}_n$ are indeed isomorphic.
	
	\subsection{Group isomorphic to $G_n$}

	For $n\geq 3$, set $m = 3 {n \choose 2} + 2 {n \choose 3}$, and consider $\mathbb{P}_n = \mathbb{Z}^{2n} \oplus \mathbb{F}_2^m$. Let:
	\vspace{-0.2in}
	
	
	
	\begin{equation}\label{def:alpha}
		\begin{aligned}
			\overline{\alpha} = (&\alpha_1, \ldots, \alpha_n;\; 
			\alpha_{n+1}, \ldots, \alpha_{2n};\;
			\underbrace{\alpha_{12}^1, \alpha_{13}^1, \ldots, \alpha_{1n}^1, 
				\alpha_{23}^1, \ldots, \alpha_{n-1,\, n}^1}_{{n \choose 2}}; \\
			&\underbrace{\alpha_{12}^2, \alpha_{13}^2, \ldots, \alpha_{1n}^2, 
				\alpha_{23}^2, \ldots, \alpha_{n-1,\, n}^2}_{{n \choose 2}}; \underbrace{\alpha_{12}^3, \alpha_{13}^3, \ldots, \alpha_{1n}^3, 
				\alpha_{23}^3, \ldots, \alpha_{n-1,\, n}^3}_{{n \choose 2}}; \\
			&\underbrace{\alpha_{123}^1, \alpha_{124}^1, \ldots, 
				\alpha_{n-2,\, n-1,\, n}^1}_{{n \choose 3}};\;
			\underbrace{\alpha_{123}^2, \alpha_{124}^2, \ldots, 
				\alpha_{n-2,\, n-1,\, n}^2}_{{n \choose 3}})
		\end{aligned}
	\end{equation}
	and let $\overline{\beta}$ be defined similarly.
	We define a product on $\mathbb{P}_n$ by:
	\vspace{-0.1in}
	
	$$\overline{\alpha} \cdot \overline{\beta} = \overline{\gamma} \in \mathbb{P}_{n},$$
	where
	\vspace{-0.3in}

	
	
	\begin{equation}\label{def:gamma}
		\begin{aligned}
			\overline{\gamma} &= \overline{\alpha} + \overline{\beta} + \overline{\tau}, \\[6pt]
			\overline{\tau} = (&\underbrace{0,\ldots,0}_{n};\;
			\underbrace{0,\ldots,0}_{n};\;
			\tau_{12}^1, \ldots, \tau_{n-1,\, n}^1;\;
			\tau_{12}^2, \ldots, \tau_{n-1,\, n}^2;\;
			\tau_{12}^3, \ldots, \tau_{n-1,\, n}^3; \\
			&\tau_{123}^1, \ldots, \tau_{n-2,\, n-1,\, n}^1;\;
			\tau_{123}^2, \ldots, \tau_{n-2,\, n-1,\, n}^2).
		\end{aligned}
	\end{equation}

	The coordinates of $\overline{\tau}$, starting from the $(2n+1)$-th position, are given modulo $2$ by:
	
	\begin{equation*}\label{def:tau}
		\begin{aligned}
			&\text{For } i<j: \\
			&\quad \tau_{ij}^1 = \alpha_j \beta_i, \\
			&\quad \tau_{ij}^2 = \alpha_{j+n} \beta_{i+n}, \\
			&\quad \tau_{ij}^3 = \alpha_{i+n} \beta_{j} + \alpha_{j+n}\beta_{i}; \\[6pt]
			&\text{For } i<j<k: \\
			&\quad \tau_{ijk}^1 = \alpha_{ij}^3 \beta_{k} + \alpha_{ik}^3 \beta_{j} + \alpha_{jk}^3 \beta_{i} 
			+ \alpha_{i+n} \beta_{j}\beta_{k} + \alpha_{j+n} \beta_{i}\beta_{k} + \alpha_{k+n} \beta_{i}\beta_{j}, \\
			&\quad \tau_{ijk}^2 = \alpha_{ij}^3 \beta_{k+n} + \alpha_{ik}^3 \beta_{j+n} + \alpha_{jk}^3 \beta_{i+n} 
			+ \alpha_{i+n}\alpha_{j+n}\beta_{k} + \alpha_{i+n}\alpha_{k+n}\beta_{j} + \alpha_{j+n}\alpha_{k+n}\beta_{i} \\
			&\quad\qquad + \alpha_{i+n}(\beta_j \beta_{k+n} + \beta_{j+n}\beta_{k})
			+ \alpha_{j+n}(\beta_i \beta_{k+n} + \beta_{i+n}\beta_{k})
			+ \alpha_{k+n}(\beta_i \beta_{j+n} + \beta_{i+n}\beta_j).
		\end{aligned}
	\end{equation*}
	

	The following remark presents a simpler way to compute the product of two elements in $\mathbb{P}_3$.
	
	\begin{remark}\label{remark:product_P3}
		Let $\overline{\alpha}, \overline{\beta} \in \mathbb{P}_3$ and put $\overline{\gamma}=\overline{\alpha}\overline{\beta}$.
		In Table~\ref{tab:mult.p3}, we present in columns~2, 3 and~4 the coordinates of $\overline{\alpha}$, $\overline{\beta}$, and $\overline{\gamma}$, respectively.
		In the 5th column, we make explicit the corresponding terms $\tau_{ij}^{k}$ and $\tau_{ijk}^{\ell}$ as given in~\eqref{def:gamma}.
		This table gives a simple rule to compute the product of two arbitrary elements of $\mathbb{P}_3$. 
		
		For convenience, and to facilitate the visualization of the coordinates of an arbitrary element $\overline{x}\in\mathbb{P}_3$, 
		we write $\overline{x}=(x_1,\dots,x_{17})$ according to the order in the first column 
		(in particular, $x_7=x_{12}^{1}$, $x_8=x_{13}^{1}$, and so on, up to $x_{17}=x_{123}^{2}$).
		
		For instance, if $\overline{\alpha}, \overline{\beta}\in \mathbb{P}_3$ and $\overline{\gamma}=\overline{\alpha}\overline{\beta}$ as above, 
		we can write the coordinates of $\overline{\gamma}$ as $(\gamma_1,\dots,\gamma_{17})$, that is,
		$\gamma_1=\alpha_1+\beta_1, \dots, \gamma_6=\alpha_6+\beta_6, \gamma_7=\alpha_{12}^{1}+\beta_{12}^{1}+\tau_{12}^{1}, \gamma_8=\alpha_{13}^{1}+\beta_{13}^{1}+\tau_{13}^{1}$, and so on.
		Entries corresponding to rows $7$--$17$ (the $\mathbb{F}_2$ part) are taken modulo~2.
	\end{remark}

	\begin{table}[H]\centering
		\footnotesize
		\begin{tabular}{l||l|l|l|p{8cm}}
			& $\overline{\alpha}$ & $\overline{\beta}$ & $\overline{\gamma}=\overline{\alpha}\overline{\beta}$ & formulas for $\tau_{ij}^{k}$ and $\tau_{ijk}^{\ell}$ \\ \hline\hline
			1  & $\alpha_1$          & $\beta_1$          & $\alpha_1 + \beta_1$                                  & 0 \\
			2  & $\alpha_2$          & $\beta_2$          & $\alpha_2 + \beta_2$                                  & 0 \\
			3  & $\alpha_3$          & $\beta_3$          & $\alpha_3 + \beta_3$                                  & 0 \\
			4  & $\alpha_4$          & $\beta_4$          & $\alpha_4 + \beta_4$                                  & 0 \\
			5  & $\alpha_5$          & $\beta_5$          & $\alpha_5 + \beta_5$                                  & 0 \\
			6  & $\alpha_6$          & $\beta_6$          & $\alpha_6 + \beta_6$                                  & 0 \\
			7  & $\alpha_{12}^1$     & $\beta_{12}^1$     & $\alpha_{12}^{1}+ \beta_{12}^{1} + \tau_{12}^{1}$     & $\tau_{12}^{1} = \alpha_2 \beta_1$ \\
			8  & $\alpha_{13}^1$     & $\beta_{13}^1$     & $\alpha_{13}^{1}+ \beta_{13}^{1} + \tau_{13}^{1}$     & $\tau_{13}^{1} = \alpha_3 \beta_1$ \\
			9  & $\alpha_{23}^1$     & $\beta_{23}^1$     & $\alpha_{23}^{1}+ \beta_{23}^{1} + \tau_{23}^{1}$     & $\tau_{23}^{1} = \alpha_3 \beta_2$ \\
			10 & $\alpha_{12}^2$     & $\beta_{12}^2$     & $\alpha_{12}^{2}+ \beta_{12}^{2} + \tau_{12}^{2}$     & $\tau_{12}^{2} = \alpha_5 \beta_4$ \\
			11 & $\alpha_{13}^2$     & $\beta_{13}^2$     & $\alpha_{13}^{2}+ \beta_{13}^{2} + \tau_{13}^{2}$     & $\tau_{13}^{2} = \alpha_6 \beta_4$ \\
			12 & $\alpha_{23}^2$     & $\beta_{23}^2$     & $\alpha_{23}^{2}+ \beta_{23}^{2} + \tau_{23}^{2}$     & $\tau_{23}^{2} = \alpha_6 \beta_5$ \\
			13 & $\alpha_{12}^3$     & $\beta_{12}^3$     & $\alpha_{12}^{3}+ \beta_{12}^{3} + \tau_{12}^{3}$     & $\tau_{12}^{3} = \alpha_4 \beta_2 + \alpha_5 \beta_1$ \\
			14 & $\alpha_{13}^3$     & $\beta_{13}^3$     & $\alpha_{13}^{3}+ \beta_{13}^{3} + \tau_{13}^{3}$     & $\tau_{13}^{3} = \alpha_4 \beta_3 + \alpha_6 \beta_1$ \\
			15 & $\alpha_{23}^3$     & $\beta_{23}^3$     & $\alpha_{23}^{3}+ \beta_{23}^{3} + \tau_{23}^{3}$     & $\tau_{23}^{3} = \alpha_5 \beta_3 + \alpha_6 \beta_2$ \\
			16 & $\alpha_{123}^1$    & $\beta_{123}^1$    & $\alpha_{123}^{1}+ \beta_{123}^{1} + \tau_{123}^{1}$  & $\tau_{123}^{1} = \alpha_{12}^{3}\beta_3 + \alpha_{13}^{3}\beta_2 + \alpha_{23}^{3}\beta_1 + \alpha_{4}\beta_2\beta_3 + \alpha_5\beta_1\beta_3 + \alpha_6\beta_1\beta_2$ \\
			17 & $\alpha_{123}^2$    & $\beta_{123}^2$    & $\alpha_{123}^{2}+ \beta_{123}^{2} + \tau_{123}^{2}$  & $\tau_{123}^{2} = \alpha_{12}^{3}\beta_6 + \alpha_{13}^{3}\beta_5 + \alpha_{23}^{3}\beta_4 + \alpha_4\alpha_5\beta_3 + \alpha_4\alpha_6\beta_2 + \alpha_5\alpha_6\beta_1 + \alpha_{4}(\beta_2\beta_6+\beta_5\beta_3) + \alpha_{5}(\beta_1\beta_6+\beta_4\beta_3) + \alpha_{6}(\beta_1\beta_5+\beta_4\beta_2)$
		\end{tabular}
		\caption{Multiplication in $\mathbb{P}_3$}
		\label{tab:mult.p3}
	\end{table}
	\vspace{-0.4cm}
	Using the notation and rules described in the previous remark, we can now prove that $\mathbb{P}_3$ is a group.
	
	\begin{lemma}\label{lemma:p3group}
		$\mathbb{P}_3$ is a group.
	\end{lemma}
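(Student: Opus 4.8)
The plan is to check the three group axioms directly from the product $\overline{\alpha}\cdot\overline{\beta}=\overline{\alpha}+\overline{\beta}+\overline{\tau}$ of \eqref{def:gamma}, using the explicit entries of $\overline{\tau}$ recorded, for $n=3$, in Table~\ref{tab:mult.p3}. The neutral element is the zero vector $\overline{0}$: every monomial appearing in the entries of $\overline{\tau}$ is divisible both by some coordinate of $\overline{\alpha}$ and by some coordinate of $\overline{\beta}$, so $\overline{\tau}(\overline{\alpha},\overline{0})=\overline{\tau}(\overline{0},\overline{\beta})=\overline{0}$ and hence $\overline{\alpha}\cdot\overline{0}=\overline{\alpha}=\overline{0}\cdot\overline{\alpha}$. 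The real content is associativity; once that is in hand, inverses will follow formally.

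For associativity I would avoid comparing all seventeen coordinates of $(\overline{\alpha}\cdot\overline{\beta})\cdot\overline{\gamma}$ and $\overline{\alpha}\cdot(\overline{\beta}\cdot\overline{\gamma})$ simultaneously, and instead exploit the strictly triangular way in which $\overline{\tau}$ depends on the coordinates. Three observations organise this. First, the first six coordinates carry no $\tau$-term (rows $1$--$6$ of Table~\ref{tab:mult.p3}), so projection onto them is operation-preserving onto the group $\mathbb{Z}^6$. Second, the coordinates $x_{16},x_{17}$ (the $z_{123},t_{123}$ part) never occur in any entry of $\overline{\tau}$, so the subset $A\cong\mathbb{F}_2^{2}$ supported on them is central and adds coordinatewise. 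Third, in the quotient $Q:=\mathbb{P}_3/A$ (coordinates $x_1,\dots,x_{15}$, product governed by the $\tau_{ij}^{k}$ alone) the subset $\overline{C}\cong\mathbb{F}_2^{9}$ where $x_1=\dots=x_6=0$ is central, because each $\tau_{ij}^{k}$ is bilinear in $x_1,\dots,x_6$. Thus $Q$ is a central extension of $\mathbb{Z}^6$ by $\mathbb{F}_2^{9}$ with factor set $f_2=(\tau_{ij}^{k})$, and $\mathbb{P}_3$ is a central extension of $Q$ by $\mathbb{F}_2^{2}$ with factor set $f_3=(\tau_{123}^{1},\tau_{123}^{2})$. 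Associativity of $\mathbb{P}_3$ then reduces to verifying the two-cocycle identity
\[
f(\overline{\alpha},\overline{\beta})+f(\overline{\alpha}\cdot\overline{\beta},\overline{\gamma})=f(\overline{\beta},\overline{\gamma})+f(\overline{\alpha},\overline{\beta}\cdot\overline{\gamma})
\]
separately for $f_2$ and for $f_3$.

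The identity for $f_2$ is automatic: each $\tau_{ij}^{k}$ is bilinear in the freely adding coordinates $x_1,\dots,x_6$, and any bilinear form is a $2$-cocycle. The identity for $f_3$ is where the work lies, and I expect it to be the main obstacle. The entries $\tau_{123}^{\ell}$ depend on the $p$-coordinates $x_{13},x_{14},x_{15}$, and these themselves acquire the corrections $\tau_{ij}^{3}$ when the middle product $\overline{\alpha}\cdot\overline{\beta}$ is formed in $Q$; the cubic terms $\alpha_4\alpha_5\beta_3,\dots$ and the mixed terms $\alpha_4(\beta_2\beta_6+\beta_5\beta_3),\dots$ in $\tau_{123}^{2}$ are exactly what is needed to absorb these corrections so that the cocycle identity closes up. This is a finite but genuinely three-variable computation over $\mathbb{F}_2$, which I would carry out coordinate by coordinate using the rule of Remark~\ref{remark:product_P3}. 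Granting associativity and the neutral element, inverses are then produced by solving $\overline{\alpha}\cdot\overline{\beta}=\overline{0}$ along the filtration: put $\beta_i=-\alpha_i$ for $1\le i\le6$, then read off $x_7,\dots,x_{15}$, and finally $x_{16},x_{17}$, each block being uniquely determined over $\mathbb{F}_2$ once the strictly lower blocks are fixed, since $\overline{\tau}$ depends only on strictly lower coordinates. The symmetric computation gives a left inverse, and associativity forces the two to agree, completing the proof that $\mathbb{P}_3$ is a group.
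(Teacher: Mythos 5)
Your proposal is correct in outline but organises the verification quite differently from the paper. The paper proves associativity by brute force: it expands all eleven nontrivial coordinates of $(\overline{a}\,\overline{b})\,\overline{c}$ (Table~\ref{tab:assoc.P3}), observes that the expansion of $\overline{a}\,(\overline{b}\,\overline{c})$ is obtained by the substitution $x\mapsto a$, $c\mapsto y$, and compares; it then writes down the inverse explicitly, coordinate by coordinate. You instead exhibit $\mathbb{P}_3$ as a tower of central extensions $\mathbb{F}_2^{2}\to\mathbb{P}_3\to Q$ and $\mathbb{F}_2^{9}\to Q\to\mathbb{Z}^{6}$, and reduce associativity to two cocycle identities. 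Your structural claims all check out against Table~\ref{tab:mult.p3}: no entry of $\overline{\tau}$ involves the coordinates $x_{16},x_{17}$, so that block is central; the entries $\tau_{ij}^{k}$ governing coordinates $7$--$15$ depend only on $x_1,\dots,x_6$ and are biadditive in them, so the $f_2$-cocycle identity is automatic and $Q$ is a group with no computation at all. This genuinely buys something: nine of the eleven coordinate verifications in the paper's Table~\ref{tab:assoc.P3} (rows $7$--$15$) become instances of the general fact that a bilinear factor set over an abelian base is a $2$-cocycle. The inverse argument via the strictly triangular dependence of $\overline{\tau}$ on lower coordinates, plus the standard right-inverse-equals-left-inverse argument in a monoid, is also sound.

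The one caveat is that the remaining $f_3$-cocycle identity --- the content of rows $16$ and $17$ of Table~\ref{tab:assoc.P3}, where $\tau_{123}^{1}$ and $\tau_{123}^{2}$ are quadratic rather than linear in $\beta_1,\dots,\beta_6$ and also involve the already-corrected coordinates $x_{13},x_{14},x_{15}$ of the middle product --- is exactly where the paper's computation is nontrivial, and you assert rather than perform it. Your reduction does not make this step easier; it isolates it. So the proposal is a valid and somewhat more conceptual proof strategy, but to be complete it still owes the same two-coordinate $\mathbb{F}_2$-verification that the paper carries out explicitly.
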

	\begin{proof}
		
		Using the notation and rules described in Remark~\ref{remark:product_P3}, 
		we compute the product of three elements to verify associativity.
		Let $\overline{a}, \overline{b}, \overline{c} \in \mathbb{P}_3$, 
		$\overline{x} = \overline{a}\,\overline{b}$, 
		$\overline{y} = \overline{b}\,\overline{c}$, 
		$\overline{r} = (\overline{a}\,\overline{b})\,\overline{c}$, 
		and $\overline{s} = \overline{a}\,(\overline{b}\,\overline{c})$. 
		We need to prove that $\overline{r} = \overline{s}$.
		
		It is clear that $r_i = s_i$ for $i=1,\dots,6$. 
		Table~\ref{tab:assoc.P3} lists the values of $r_i$ for $i=7,\dots,17$. 
		The corresponding values of $s_i$ can be obtained analogously by replacing, 
		in the second column, $x$ with $a$ and $c$ with $y$. 
		By comparing these expressions with the entries of Table~\ref{tab:assoc.P3}, 
		we conclude that $r_i = s_i$ for all $i=7,\dots,17$.

		Therefore, the product defined on $\mathbb{P}_3$ is associative. 
		Moreover, the neutral element is the zero vector of $\mathbb{P}_3$ 
		and, by a direct computation, the inverse of an element $\overline{\alpha}$ is obtained as follows. 

		Let 
		$
		\overline{\alpha} = (\alpha_1,\ldots,\alpha_6;\;
		\alpha_{12}^1,\alpha_{13}^1,\alpha_{23}^1;\;
		\alpha_{12}^2,\alpha_{13}^2,\alpha_{23}^2;\;
		\alpha_{12}^3,\alpha_{13}^3,\alpha_{23}^3;\;
		\alpha_{123}^1,\alpha_{123}^2),
		$
		and let its inverse be 
		$
		\overline{\beta} = -\overline{\alpha} = (\beta_1,\ldots,\beta_6;\;
		\beta_{12}^1,\beta_{13}^1,\beta_{23}^1;\;
		\beta_{12}^2,\beta_{13}^2,\beta_{23}^2;\;
		\beta_{12}^3,\beta_{13}^3,\beta_{23}^3;\;
		\beta_{123}^1,\beta_{123}^2).
		$
		Recall that the last $11$ coordinates are taken modulo $2$.
		Each coordinate of the inverse of $\overline{\alpha}$ is given by: 
		\begin{equation*}\label{eq:inverse-P3}
			\begin{aligned}
				& \beta_i = -\alpha_i \quad (i=1,\dots,6), \\[4pt]
				& \beta_{12}^{1} = \alpha_{12}^{1} + \alpha_1\alpha_2,\quad
				\beta_{13}^{1} = \alpha_{13}^{1} + \alpha_1\alpha_3,\quad
				\beta_{23}^{1} = \alpha_{23}^{1} + \alpha_2\alpha_3, \\
				& \beta_{12}^{2} = \alpha_{12}^{2} + \alpha_4\alpha_5,\quad
				\beta_{13}^{2} = \alpha_{13}^{2} + \alpha_4\alpha_6,\quad
				\beta_{23}^{2} = \alpha_{23}^{2} + \alpha_5\alpha_6, \\
				& \beta_{12}^{3} = \alpha_{12}^{3} + \alpha_4\alpha_2 + \alpha_5\alpha_1,\quad
				\beta_{13}^{3} = \alpha_{13}^{3} + \alpha_4\alpha_3 + \alpha_6\alpha_1,\quad
				\beta_{23}^{3} = \alpha_{23}^{3} + \alpha_5\alpha_3 + \alpha_6\alpha_2, \\[4pt]
				& \beta_{123}^{1} = \alpha_{123}^{1}
				+ \alpha_{12}^{3}\alpha_3 + \alpha_{13}^{3}\alpha_2 + \alpha_{23}^{3}\alpha_1
				+ \alpha_4\alpha_2\alpha_3 + \alpha_5\alpha_1\alpha_3 + \alpha_6\alpha_1\alpha_2, \\[2pt]
				& \beta_{123}^{2} = \alpha_{123}^{2}
				+ \alpha_{12}^{3}\alpha_6 + \alpha_{13}^{3}\alpha_5 + \alpha_{23}^{3}\alpha_4
				+ \alpha_4\alpha_5\alpha_3 + \alpha_4\alpha_6\alpha_2 + \alpha_5\alpha_6\alpha_1 \\
				& \hspace{3.8em}
				+ \alpha_4(\alpha_2\alpha_6 + \alpha_5\alpha_3)
				+ \alpha_5(\alpha_1\alpha_6 + \alpha_4\alpha_3)
				+ \alpha_6(\alpha_1\alpha_5 + \alpha_4\alpha_2).
			\end{aligned}
		\end{equation*}
		It follows from the above computations that $\mathbb{P}_3$ is a group. \qedhere
		
		\begin{table}[H]
			\footnotesize 
			\begin{tabular}{c||l|l}
				$i$ & $r_i$ & $r_i$ \\ \hline \hline 
				7   & $x_7 + c_7 + x_2 c_1$ & $a_7 + b_7 + c_7 + a_2 b_1 + (a_2 + b_2)c_1$ \\
				8   & $x_8 + c_8 + x_3 c_1$ & $a_8 + b_8 + c_8 + a_3 b_1 + (a_3 + b_3)c_1$ \\
				9   & $x_9 + c_9 + x_3 c_2$ & $a_9 + b_9 + c_9 + a_3 b_2 + (a_3 + b_3)c_2$ \\
				10  & $x_{10} + c_{10} + x_5 c_4$ & $a_{10} + b_{10} + c_{10} + a_5 b_4 + (a_5 + b_5)c_4$ \\
				11  & $x_{11} + c_{11} + x_6 c_4$ & $a_{11} + b_{11} + c_{11} + a_6 b_4 + (a_6 + b_6)c_4$ \\
				12  & $x_{12} + c_{12} + x_6 c_5$ & $a_{12} + b_{12} + c_{12} + a_6 b_5 + (a_6 + b_6)c_5$ \\
				13  & $x_{13} + c_{13} + x_4 c_2 + x_5 c_1$ &
				$a_{13} + b_{13} + c_{13} + a_4 b_2 + a_5 b_1 + (a_4 + b_4)c_2 + (a_5 + b_5)c_1$\\
				14  & $x_{14} + c_{14} + x_4 c_3 + x_6 c_1$ &
				$a_{14} + b_{14} + c_{14} + a_4 b_3 + a_6 b_1 + (a_4 + b_4)c_3 + (a_6 + b_6)c_1$\\
				15  & $x_{15} + c_{15} + x_5 c_3 + x_6 c_2$ &
				$a_{15} + b_{15} + c_{15} + a_5 b_3 + a_6 b_2 + (a_5 + b_5)c_3 + (a_6 + b_6)c_2$\\ 
				16  & \begin{tabular}[t]{@{}l@{}}$x_{16} + c_{16} + x_{13}c_3 + x_{14}c_2$ \\ 
					$+ x_{15}c_1 + x_4 c_2 c_3 + x_5 c_1 c_3 + x_6 c_1 c_2$\end{tabular} &
				\begin{tabular}[t]{@{}l@{}}$a_{16} + b_{16} + c_{16} + a_{13}b_3 + a_{14}b_2 + a_{15}b_1 + a_4 b_2 b_3$ \\ 
					$+ a_5 b_1 b_3 + a_6 b_1 b_2 + (a_{13} + b_{13} + a_4 b_2 + a_5 b_1)c_3$ \\ 
					$+ (a_4 + b_4)c_2 c_3 + (a_{14} + b_{14} + a_4 b_3 + a_6 b_1)c_2$ \\ 
					$+ (a_5 + b_5)c_1 c_3 + (a_6 + b_6)c_1 c_2 + (a_{15} + b_{15} + a_5 b_3 + a_6 b_2)c_1$\end{tabular} \\
				17  & \begin{tabular}[t]{@{}l@{}}$x_{17} + c_{17} + x_{13}c_6 + x_{14}c_5$ \\ 
					$+ x_{15}c_4 + x_4 x_5 c_3 + x_4 x_6 c_2 + x_5 x_6 c_1$ \\ 
					$+ x_{4}(c_2 c_6 + c_5 c_3) + x_{5}(c_1 c_6 + c_4 c_3)$ \\ 
					$+ x_{6}(c_1 c_5 + c_4 c_2)$\end{tabular} &
				\begin{tabular}[t]{@{}l@{}}$a_{17} + b_{17} + c_{17} + a_{13}b_6 + a_{14}b_5 + a_{15}b_4 + a_4 a_5 b_3 + a_4 a_6 b_2 $ \\ 
					$ + a_5 a_6 b_1 + a_{4}(b_2 b_6 + b_5 b_3) + (a_6 + b_6)(c_1 c_5 + c_4 c_2)$ \\ 
					$+ a_{5}(b_1 b_6 + b_4 b_3) + (a_{14} + b_{14} + a_4 b_3 + a_6 b_1)c_5$ \\ 
					$+ (a_{13} + b_{13} + a_4 b_2 + a_5 b_1)c_6 + a_{6}(b_1 b_5 + b_4 b_2)$ \\ 
					$+ (a_{15} + b_{15} + a_5 b_3 + a_6 b_2)c_4 + (a_4 + b_4)(a_5 + b_5)c_3$ \\ 
					$+ (a_4 + b_4)(a_6 + b_6)c_2 + (a_5 + b_5)(a_6 + b_6)c_1$ \\ 
					$+ (a_4 + b_4)(c_2 c_6 + c_5 c_3) + (a_5 + b_5)(c_1 c_6 + c_4 c_3)$\end{tabular}
			\end{tabular}
			\caption{Explicit computation of $r_i$ for $i=7,\dots,17$}
			\label{tab:assoc.P3}
		\end{table}
	\end{proof}
	For the general case, similar computations show that $\mathbb{P}_n$ is a group. 
	For reference, the product of elements in $\mathbb{P}_n$ is given in Table~\ref{tab:mult.pn} (see Appendix~\ref{appendice:tables}), 
	and the corresponding computations for associativity are summarized in Tables~\ref{tab:assoc.pn.parte1}--\ref{tab:assoc.pn.parte2}.

	\begin{proposition}
		$\mathbb{P}_n$ is a group.  
	\end{proposition}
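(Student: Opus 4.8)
The plan is to avoid repeating the long coordinate-by-coordinate associativity check and instead to deduce the group axioms for $\mathbb{P}_n$ from Lemma~\ref{lemma:p3group} via an embedding into a direct product of copies of $\mathbb{P}_3$, in the spirit of the embedding of $G_n$ into products of $G_3$. For each $3$-subset $T=\{i,j,k\}$ with $i<j<k$, let $\pi_T\colon\mathbb{P}_n\to\mathbb{P}_3$ be the map that reads off the coordinates of $\overline{\alpha}$ supported on $T$: the entries $\alpha_i,\alpha_j,\alpha_k$ and $\alpha_{i+n},\alpha_{j+n},\alpha_{k+n}$, the pair-coordinates $\alpha_{ij}^{\ell},\alpha_{ik}^{\ell},\alpha_{jk}^{\ell}$ for $\ell=1,2,3$, and the triple-coordinates $\alpha_{ijk}^{1},\alpha_{ijk}^{2}$, under the identification $(i,j,k)\mapsto(1,2,3)$ and $(i+n,j+n,k+n)\mapsto(4,5,6)$. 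I then assemble these into one map $\Phi=(\pi_T)_T\colon\mathbb{P}_n\to\prod_{T}\mathbb{P}_3$.

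First I would verify that $\Phi$ is injective; this is where $n\ge 3$ enters. Every first-block coordinate $\alpha_l$ and every second-block coordinate $\alpha_{l+n}$ lies in some triple, every pair-coordinate $\alpha_{ij}^{\ell}$ is recovered from any triple containing $\{i,j\}$ (which exists since $n\ge 3$), and each triple-coordinate is read off by its own $\pi_T$, so $\ker\Phi=\{\overline{0}\}$. The key step is to show that each $\pi_T$ is multiplicative, i.e. $\pi_T(\overline{\alpha}\,\overline{\beta})=\pi_T(\overline{\alpha})\,\pi_T(\overline{\beta})$. This rests on the \emph{locality} of the correction term $\overline{\tau}$ in \eqref{def:gamma}: inspecting its defining formulas, each $\tau_{ij}^{\ell}$ depends only on input coordinates indexed within $\{i,j,i+n,j+n\}$ and each $\tau_{ijk}^{\ell}$ only on those indexed within $\{i,j,k,i+n,j+n,k+n\}$, and under the identification above these expressions coincide termwise with the $\mathbb{P}_3$ formulas of Table~\ref{tab:mult.p3}. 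Hence restricting the $\mathbb{P}_n$-product to the coordinates of $T$ reproduces exactly the $\mathbb{P}_3$-product of the restrictions, so $\Phi$ is a homomorphism into the group $\prod_T\mathbb{P}_3$, which is a group by Lemma~\ref{lemma:p3group}.

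Associativity on $\mathbb{P}_n$ is then immediate: for any $\overline{a},\overline{b},\overline{c}$, both $(\overline{a}\,\overline{b})\,\overline{c}$ and $\overline{a}\,(\overline{b}\,\overline{c})$ are sent by $\Phi$ to the common product $\Phi(\overline{a})\Phi(\overline{b})\Phi(\overline{c})$ in $\prod_T\mathbb{P}_3$, so injectivity of $\Phi$ forces them to be equal. The zero vector is the identity, since $\overline{\tau}$ vanishes whenever a factor equals $\overline{0}$. For inverses I would define $-\overline{\alpha}$ by the same formulas as in the $\mathbb{P}_3$ case applied within each pair and triple of indices; since those formulas are themselves local, the value prescribed on a shared pair-coordinate agrees across all triples containing that pair, so $-\overline{\alpha}$ is a well-defined element of $\mathbb{P}_n$ with $\pi_T(-\overline{\alpha})=\pi_T(\overline{\alpha})^{-1}$ for every $T$. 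Then $\Phi\bigl(\overline{\alpha}\cdot(-\overline{\alpha})\bigr)=\Phi(\overline{0})$, and injectivity gives $\overline{\alpha}\cdot(-\overline{\alpha})=\overline{0}$, with the opposite side handled symmetrically.

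I expect the main obstacle to be precisely the locality and compatibility check in the multiplicative step: one must confirm by inspection that no monomial of any $\tau_{ij}^{\ell}$ or $\tau_{ijk}^{\ell}$ mixes indices lying outside the relevant pair or triple, and that the surviving monomials match the $\mathbb{P}_3$ formulas exactly. This is a finite, mechanical verification, but it is the conceptual heart of the proof, as it is what genuinely reduces the global structure of $\mathbb{P}_n$ to the already-settled case of $\mathbb{P}_3$. If one prefers, the direct route used for $\mathbb{P}_3$ also works verbatim, checking $r_i=s_i$ coordinate by coordinate as in Table~\ref{tab:assoc.P3}, but it is longer and less transparent than the embedding argument.
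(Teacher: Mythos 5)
Your proof is correct, but it takes a genuinely different route from the paper. The paper proves the proposition by brute force: it records the general multiplication formula for $\mathbb{P}_n$ in Table~\ref{tab:mult.pn} and then verifies associativity coordinate by coordinate in Tables~\ref{tab:assoc.pn.parte1}--\ref{tab:assoc.pn.parte2}, exactly as in the $n=3$ case of Lemma~\ref{lemma:p3group}, with inverses exhibited by explicit formulas. You instead reduce to the already-settled case $n=3$ by assembling the restriction maps $\pi_T$ into an injection $\Phi\colon\mathbb{P}_n\to\prod_T\mathbb{P}_3$ and checking that each $\pi_T$ is multiplicative. The key locality claim does hold: every monomial of $\tau_{ij}^{\ell}$ involves only indices in $\{i,j,i+n,j+n\}$, every monomial of $\tau_{ijk}^{\ell}$ only indices in $\{i,j,k,i+n,j+n,k+n\}$ together with the pair-coordinates of pairs inside $\{i,j,k\}$, and under the order-preserving relabeling $(i,j,k)\mapsto(1,2,3)$ these formulas coincide with those of Table~\ref{tab:mult.p3}; likewise the inverse formulas from the proof of Lemma~\ref{lemma:p3group} depend only on the pair or triple in question, so your globally defined $-\overline{\alpha}$ is consistent. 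Injectivity of $\Phi$ is immediate for $n\ge 3$ since each coordinate of $\overline{\alpha}$ is read off by some $\pi_T$. What your argument buys is the elimination of the appendix-length computation and, more importantly, an early appearance of exactly the embedding $G_n\hookrightarrow\bigoplus_{\alpha}A_{\alpha}$ that the paper only introduces later in Theorem~\ref{theorem:gn.triality}; your $\Phi$ is its coordinate incarnation, so the two results could share one verification. What the paper's direct approach buys is that the explicit general product and triple-product formulas are written down once and reused elsewhere (e.g.\ in the triality computation), whereas your route still requires the finite inspection of the $\tau$-formulas in \eqref{def:gamma} that you correctly identify as the conceptual heart of the reduction.
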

	

	Since $\mathbb{P}_n$ is a group whose structure reflects the relations of $G_n$, we can naturally establish an isomorphism between $\mathbb{P}_n$ and $G_n$, as given in the following proposition.

	\begin{proposition}\label{prop.iso}
		The map $\varphi \colon \mathbb{P}_n \to G_n$ defined by
		\begin{equation*}
			\begin{aligned}
				\varphi(\overline{\alpha}) \;=\;& 
				a_1^{\alpha_1}\cdots a_n^{\alpha_n}\,
				b_1^{\alpha_{1+n}}\cdots b_n^{\alpha_{2n}} \,
				u_{12}^{\alpha_{12}^1}\cdots u_{n-1,\,n}^{\alpha_{n-1,\,n}^1}
				\, v_{12}^{\alpha_{12}^2}\cdots v_{n-1,\,n}^{\alpha_{n-1,\,n}^2} \\[4pt]
				& p_{12}^{\alpha_{12}^3}\cdots p_{n-1,\,n}^{\alpha_{n-1,\,n}^3} \,
				z_{123}^{\alpha_{123}^1}\cdots z_{n-2,\,n-1,\,n}^{\alpha_{n-2,\,n-1,\,n}^1} \,
				t_{123}^{\alpha_{123}^2}\cdots t_{n-2,\,n-1,\,n}^{\alpha_{n-2,\,n-1,\,n}^2}
			\end{aligned}
		\end{equation*}
		is an isomorphism.
	\end{proposition}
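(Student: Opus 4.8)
The plan is to construct an explicit homomorphism $\psi\colon G_n \to \mathbb{P}_n$ in the reverse direction and to show that $\psi$ is a bijection with $\varphi$ as its inverse; since $\mathbb{P}_n$ has already been shown to be a group, this is the most economical route, because the inverse of a bijective homomorphism is automatically a homomorphism. Write $e_k$ for the element of $\mathbb{P}_n$ having $1$ in its $k$-th coordinate (among the first $2n$) and $0$ elsewhere. First I would define $\psi$ on generators by $\psi(a_i) = e_i$ and $\psi(b_i) = e_{n+i}$, and invoke the universal property of the presentation of $G_n$: to see that $\psi$ extends to a group homomorphism it suffices to check that these images satisfy each of the defining relations \ref{rel.1}--\ref{rel.8} inside $\mathbb{P}_n$. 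Using the product rule \eqref{def:gamma} and the formulas for $\overline{\tau}$, one computes the commutators $[e_i,e_j]$, $[e_{n+i},e_{n+j}]$ and $[e_i,e_{n+j}]$ and identifies them with the coordinate vectors labelled $u_{ij}$, $v_{ij}$ and $p_{ij}$, while the iterated commutators $[p_{ij},a_k]$ and $[p_{ij},b_k]$ land in the coordinates labelled $z_{ijk}$ and $t_{ijk}$; verifying in addition that all the prescribed squares vanish and that the appropriate elements are central (everything modulo $2$ in the $\mathbb{F}_2$ block) confirms the remaining relations and makes $\psi$ well defined.

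Next I would establish that $\varphi$ is surjective by generalizing the normal form of Remark~\ref{rem.normalform} from $G_3$ to $G_n$. Using relations \ref{rel.1}--\ref{rel.8} together with class-$3$ nilpotency, every element of $G_n$ can be rewritten by moving all occurrences of the $a_i$ and $b_i$ to the left and collecting the weight-$2$ commutators $u_{ij}, v_{ij}, p_{ij}$ and the central weight-$3$ terms $z_{ijk}, t_{ijk}$ on the right, in exactly the order displayed in the definition of $\varphi$. Hence every $g \in G_n$ equals $\varphi(\overline{\alpha})$ for some $\overline{\alpha} \in \mathbb{P}_n$, so $\varphi$ is onto.

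Finally I would verify the single identity $\psi \circ \varphi = \mathrm{id}_{\mathbb{P}_n}$ by tracking coordinates. Applying $\psi$ to $\varphi(\overline{\alpha})$ replaces each factor by the corresponding coordinate vector of $\mathbb{P}_n$, and since these factors already appear in normal order, the product rule contributes no further $\overline{\tau}$-corrections, so the product reassembles precisely $\overline{\alpha}$; the commutator calculus recorded in Lemma~\ref{lemma.product} and its $n$-variable analogue is exactly what guarantees this. Granting $\psi\varphi = \mathrm{id}$, the map $\varphi$ is injective and $\psi$ is surjective, and the surjectivity of $\varphi$ then forces $\psi$ to be injective as well (if $\psi(g) = \psi(g')$, write $g = \varphi(\overline{\alpha})$ and $g' = \varphi(\overline{\alpha}')$ and apply $\psi\varphi = \mathrm{id}$ to obtain $\overline{\alpha} = \overline{\alpha}'$). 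Thus $\psi$ is an isomorphism and $\varphi = \psi^{-1}$ is the desired isomorphism.

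The main obstacle will be the relation-checking that makes $\psi$ well defined, and specifically the symmetry relations \ref{rel.5} and \ref{rel.6}, that is $z_{ijk} = z_{jik} = z_{kij}$ and $t_{ijk} = t_{jik} = t_{kij}$. These equalities amount to requiring that the distinct iterated commutators $[p_{ij},a_k]$, $[p_{ik},a_j]$ (and their $b$-analogues) coincide in $\mathbb{P}_n$, yet the formulas for $\tau_{ijk}^1$ and $\tau_{ijk}^2$ are not visibly symmetric in the three index positions---the expression for $\tau_{ijk}^2$ is considerably longer and mixes the positions asymmetrically. Reconciling these commutators modulo $2$ is the delicate bookkeeping at the heart of the construction; once it is in place, everything else reduces to the routine, if lengthy, computations already summarized in Lemma~\ref{lemma.product} and Table~\ref{tab:mult.p3}.
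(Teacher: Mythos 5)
The paper states Proposition~\ref{prop.iso} with no proof at all, so there is nothing to match your argument against line by line; what you propose is correct and is, in effect, the argument the paper leaves implicit. Your route --- define $\psi$ on the generators $a_i\mapsto e_i$, $b_i\mapsto e_{n+i}$, verify relations \ref{rel.1}--\ref{rel.8} (and class-$3$ nilpotency of $\mathbb{P}_n$) so that von Dyck extends $\psi$ to a homomorphism, prove surjectivity of $\varphi$ by collection to normal form, and check $\psi\circ\varphi=\mathrm{id}$ --- is not just one valid option but essentially the only rigorous one here: collection gives \emph{existence} of the normal form (surjectivity of $\varphi$), but its \emph{uniqueness} (injectivity of $\varphi$) cannot be read off from the presentation and is precisely what the concrete model $\mathbb{P}_n$ supplies via the reverse homomorphism. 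Your final bookkeeping step also checks out: multiplying the basis vectors in the prescribed order $a$'s, $b$'s, $u$'s, $v$'s, $p$'s, $z$'s, $t$'s produces no $\overline{\tau}$-corrections, because every $\tau^{k}_{ij}$ and $\tau^{\ell}_{ijk}$ term contains a coordinate of the right-hand factor in a position that is still zero at that stage of the product. One reassurance on the point you flag as the main obstacle: the symmetry relations $z_{ijk}=z_{jik}=z_{kij}$ and $t_{ijk}=t_{jik}=t_{kij}$ are less delicate than they look, since $\mathbb{P}_n$ carries a \emph{single} coordinate per unordered triple and the formula $\tau^{1}_{ijk}=\alpha^{3}_{ij}\beta_k+\alpha^{3}_{ik}\beta_j+\alpha^{3}_{jk}\beta_i+\cdots$ already treats the three iterated commutators $[\,p_{ij},a_k]$, $[\,p_{ik},a_j]$, $[\,p_{jk},a_i]$ on an equal footing, each contributing $1$ to that same coordinate; the apparent asymmetry of $\tau^{2}_{ijk}$ only affects lower-order cross terms that are central and cancel in the commutator computation.
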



	The groups $\mathbb{P}_n$ and $G_n$ are isomorphic, then we can use this isomorphism to prove that $G_n$ is a group with triality. In the next subsection, for the case $n = 3$, we introduce automorphisms $\sigma$ and $\rho$ of $G_3$ of order $2$ and $3$, respectively, in order to prove that $G_3$ is a group with triality $S = \langle \sigma, \rho \ \mid \ \sigma^2 = \rho^3 = (\sigma \rho)^2 = 1 \rangle \cong S_3$. This result will later be extended to establish the general case for $G_n$ (see Subsection~\ref{subsec:general.case}).
	

	
	\subsection{Automorphisms of $G_3$}
	Throughout this subsection, we consider $G = G_3$, a nilpotent group of class $3$ generated by $\left\{a_1,a_2, a_3, b_1, b_2, b_3 \right\}$ and satisfying the relations \ref{rel.1} through \ref{rel.8}. 
	We suppose $u_{ij} = [a_i, a_j],$ $v_{ij} = [b_i, b_j]$, $p_{ij} = [a_i, b_j]$ for $i < j$ and $z_{123}$, $t_{123}$ defined accordingly by these relations. Let the maps $\sigma $, $\tau$ of $G$ be defined by:
	
	\begin{equation}\label{eq.sigma}
		a_{i}^{\sigma} = b_{i}, b_{i}^{\sigma} = a_i, u_{ij}^{\sigma} = v_{ij}, v_{ij}^{\sigma} = u_{ij}, p_{ij}^\sigma = p_{ij}, z_{ijk}^\sigma = t_{ijk}, t_{ijk}^\sigma = z_{ijk}.  
	\end{equation}
	
	\begin{equation}\label{eq.tau}
		a_i^\tau = a_i, b_i^\tau = a_{i}^{-1}b_{i}^{-1}, u_{ij}^\tau = u_{ij}, v_{ij}^\tau = u_{ij}v_{ij}, p_{ij}^\tau = u_{ij}p_{ij}, z_{ijk}^{\tau} = z_{ijk}, t_{ijk}^{\tau} = z_{ijk}t_{ijk}.  
	\end{equation}
	
	
	Next, we first prove that $\sigma$ and $\tau$ are automorphismos of $G$ of order $2$. Then, we will construct another automorphism from those of order $3$ (see Remark \ref{remark:rho}).

	\begin{lemma}\label{lemma.sigma}
		Let  $\sigma$ be defined as above and write $x \in G$ in the following form: 
		\begin{equation} \label{eq.normalform2}
			x = a_{1}^{\alpha_1}a_{2}^{\alpha_2}a_{3}^{\alpha_3} b_{1}^{\alpha_4}b_{2}^{\alpha_5}b_{3}^{\alpha_6} u_{12}^{\alpha_{7}}u_{13}^{\alpha_{8}}u_{23}^{\alpha_{9}
			}v_{12}^{\alpha_{10}}v_{13}^{\alpha_{11}}v_{23}^{\alpha_{12}}p_{12}^{\alpha_{13}}p_{13}^{\alpha_{14}}p_{23}^{\alpha_{15}} z_{123}^{\alpha_{16}} t_{123}^{\alpha_{17}},
		\end{equation}
		where $\alpha_l \in \mathbb{Z}$ for $l=1,\dots,6$ and $\alpha_l \in \mathbb{F}_2$ for $l=7,\dots,17$.
		Then:
		\begin{equation} 
			x^\sigma = a_{1}^{\alpha_4}a_{2}^{\alpha_5}a_{3}^{\alpha_6} b_{1}^{\alpha_1}b_{2}^{\alpha_2}b_{3}^{\alpha_3} u_{12}^{\alpha_{10}}u_{13}^{\alpha_{11}}u_{23}^{\alpha_{12}
			}v_{12}^{\alpha_{7}}v_{13}^{\alpha_{8}}v_{23}^{\alpha_{9}}p_{12}^{\alpha_{13}^{'}}p_{13}^{\alpha_{14}^{'}}p_{23}^{\alpha_{15}^{'}} z_{123}^{\alpha_{16}^{'}} t_{123}^{\alpha_{17}^{'}},
		\end{equation}
		where\\ $\alpha_{13}^{'} = \alpha_{13} + \alpha_{1}\alpha_{5} + \alpha_{2}\alpha_{4}$, $\alpha_{14}^{'} = \alpha_{14} + \alpha_{1}\alpha_{6} + \alpha_{3}\alpha_{4}$, $\alpha_{15}^{'} = \alpha_{15} + \alpha_{2}\alpha_{6} + \alpha_{3}\alpha_{5}$, \linebreak $\alpha_{16}^{'} = \alpha_{17} + \alpha_{1}\alpha_{5}\alpha_{6} + \alpha_{2}\alpha_{4}\alpha_{6} + \alpha_{3}\alpha_{4}\alpha_{5}$ and $\alpha_{17}^{'} = \alpha_{16} + \alpha_{1}\alpha_{2}\alpha_{6} + \alpha_{1}\alpha_{3}\alpha_{5} + \alpha_{2}\alpha_{3}\alpha_{4}$.
	\end{lemma}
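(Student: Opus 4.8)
The plan is to treat $\sigma$ as a homomorphism and reduce the whole computation of $x^\sigma$ to the product formula already recorded in Lemma~\ref{lemma.product}. First I would check that $\sigma$ is well defined on $G$, i.e.\ that the prescribed images in \eqref{eq.sigma} of the derived generators $u_{ij},v_{ij},p_{ij},z_{ijk},t_{ijk}$ agree with the values forced by applying $\sigma$ to their commutator expressions in the $a_i,b_i$. For $u_{ij}=[a_i,a_j]$ and $v_{ij}=[b_i,b_j]$ this is immediate; for $p_{ij}=[a_i,b_j]$ one computes $p_{ij}^\sigma=[b_i,a_j]=[a_j,b_i]^{-1}=p_{ji}^{-1}=p_{ji}=p_{ij}$, using the relations $p_{ij}=p_{ji}$ and $p_{ij}^2=1$; and for $z_{ijk}=[p_{ij},a_k]$, $t_{ijk}=[p_{ij},b_k]$ the interchange $z_{ijk}\leftrightarrow t_{ijk}$ follows likewise. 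Since $\sigma^2$ fixes every generator, $\sigma$ is an automorphism, so it may be applied factor by factor to the normal form \eqref{eq.normalform2}.

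Next I would split $x=g\mu$ exactly as in Remark~\ref{rem.normalform}, with $g=a_{1}^{\alpha_1}a_{2}^{\alpha_2}a_{3}^{\alpha_3}b_{1}^{\alpha_4}b_{2}^{\alpha_5}b_{3}^{\alpha_6}p_{12}^{\alpha_{13}}p_{13}^{\alpha_{14}}p_{23}^{\alpha_{15}}$ and $\mu$ the central tail in $u,v,z,t$. Because $\sigma$ is a homomorphism, $x^\sigma=g^\sigma\mu^\sigma$, and the two factors can be handled separately. The factor $\mu^\sigma$ is routine: by \eqref{eq.sigma} it merely interchanges $u_{ij}\leftrightarrow v_{ij}$ and $z_{123}\leftrightarrow t_{123}$, and since all of these lie in $Z(G)$ they reorder freely, yielding $u_{12}^{\alpha_{10}}u_{13}^{\alpha_{11}}u_{23}^{\alpha_{12}}v_{12}^{\alpha_7}v_{13}^{\alpha_8}v_{23}^{\alpha_9}z_{123}^{\alpha_{17}}t_{123}^{\alpha_{16}}$.

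The substance is the factor $g^\sigma=b_{1}^{\alpha_1}b_{2}^{\alpha_2}b_{3}^{\alpha_3}a_{1}^{\alpha_4}a_{2}^{\alpha_5}a_{3}^{\alpha_6}p_{12}^{\alpha_{13}}p_{13}^{\alpha_{14}}p_{23}^{\alpha_{15}}$, which fails to be in normal form because the $b$'s precede the $a$'s. Rather than move the $a$'s past the $b$'s by hand and collect commutators, I would write $g^\sigma=g_1g_2$ with $g_1=b_{1}^{\alpha_1}b_{2}^{\alpha_2}b_{3}^{\alpha_3}$ and $g_2=a_{1}^{\alpha_4}a_{2}^{\alpha_5}a_{3}^{\alpha_6}p_{12}^{\alpha_{13}}p_{13}^{\alpha_{14}}p_{23}^{\alpha_{15}}$, both already in normal form and represented by the matrices
\[
g_1=\begin{pmatrix}0&0&0\\ \alpha_1&\alpha_2&\alpha_3\\ 0&0&0\end{pmatrix},\qquad
g_2=\begin{pmatrix}\alpha_4&\alpha_5&\alpha_6\\ 0&0&0\\ \alpha_{13}&\alpha_{14}&\alpha_{15}\end{pmatrix},
\]
and then invoke Lemma~\ref{lemma.product} verbatim. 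Substituting this zero/nonzero pattern, the first two rows of the product return the $a$- and $b$-exponents $(\alpha_4,\alpha_5,\alpha_6)$ and $(\alpha_1,\alpha_2,\alpha_3)$; the third row returns the corrected $p$-exponents $\alpha_{13}',\alpha_{14}',\alpha_{15}'$; and in the central factor of the lemma every $u,v$-contribution vanishes while $\varepsilon_z$ and $\varepsilon_t$ collapse to $\alpha_1\alpha_5\alpha_6+\alpha_2\alpha_4\alpha_6+\alpha_3\alpha_4\alpha_5$ and $\alpha_1\alpha_2\alpha_6+\alpha_1\alpha_3\alpha_5+\alpha_2\alpha_3\alpha_4$, respectively.

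Finally I would recombine $x^\sigma=g^\sigma\mu^\sigma$ and add the central exponents: the $z_{123}$-exponent becomes $\alpha_{17}+\varepsilon_z=\alpha_{16}'$ and the $t_{123}$-exponent becomes $\alpha_{16}+\varepsilon_t=\alpha_{17}'$, while the $u,v,p$-parts are exactly those read off above, giving precisely the asserted normal form. The only genuine obstacle is bookkeeping: one must match the generic symbols $\alpha_i,\beta_i$ of Lemma~\ref{lemma.product} against the specific coordinates of $g_1$ and $g_2$ without index confusion. Once the homomorphism property of $\sigma$ and the splitting $x=g\mu$ are in place, no further conceptual input is needed.
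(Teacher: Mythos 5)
Your proposal is correct and follows essentially the same route as the paper: apply $\sigma$ letter by letter to the normal form, then rewrite the resulting word as the product of $g_1=b_{1}^{\alpha_1}b_{2}^{\alpha_2}b_{3}^{\alpha_3}$ and $g_2=a_{1}^{\alpha_4}a_{2}^{\alpha_5}a_{3}^{\alpha_6}p_{12}^{\alpha_{13}}p_{13}^{\alpha_{14}}p_{23}^{\alpha_{15}}$ and invoke Lemma~\ref{lemma.product}, whose specialization you have carried out correctly. The only difference is that you front-load the well-definedness of $\sigma$ on the derived generators, which the paper defers to Proposition~\ref{prop.sigma}; this is harmless and arguably tidier.
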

	
	\begin{proof}
		
		Let $x\in G$ defined by Equation \ref{eq.normalform2}. Applying $\sigma$ we get: 
		
		\begin{equation} 
			x^\sigma = b_{1}^{\alpha_1}b_{2}^{\alpha_2}b_{3}^{\alpha_3} a_{1}^{\alpha_4}a_{2}^{\alpha_5}a_{3}^{\alpha_6} v_{12}^{\alpha_{7}}v_{13}^{\alpha_{8}}v_{23}^{\alpha_{9}
			}u_{12}^{\alpha_{10}}u_{13}^{\alpha_{11}}u_{23}^{\alpha_{12}}p_{12}^{\alpha_{13}}p_{13}^{\alpha_{14}}p_{23}^{\alpha_{15}} t_{123}^{\alpha_{16}} z_{123}^{\alpha_{17}},
		\end{equation}
		where $\alpha_l \in \mathbb{Z}$, $l=1,\dots,6$ and $\alpha_l \in \mathbb{Z}_2$, $l=7,\dots,17$.
		
		We need to write $x^\sigma$ in the form given by Remark \ref{rem.normalform}.  Consider $g_1 = a_{1}^0 a_{2}^{0}a_{3}^{0} b_{1}^{\alpha_1}b_{2}^{\alpha_2}b_{3}^{\alpha_3}$ and $g_2 = a_{1}^{\alpha_4}a_{2}^{\alpha_5}a_{3}^{\alpha_6} b_{1}^{0}b_{2}^{0}b_{3}^{0}p_{12}^{\alpha_{13}}p_{13}^{\alpha_{14}}p_{23}^{\alpha_{15}}$. Applying the Lemma \ref{lemma.product}, we obtain the desire result.
	\end{proof}
	
	As a consequence, using the isomorphism $G_3 \cong \mathbb{P}_3$, 
	the action of $\sigma$ can be described in terms of the exponents as follows.
	
	\begin{remark}\label{rem.sigma}
		Let $\sigma$ be given by Definition \ref{eq.sigma}. If $\overline{\alpha} = (\alpha_1, \ldots, \alpha_{17}) \in \mathbb{P}_3$, then as a direct consequence of the Lemma \ref{lemma.sigma} we can write 
		\begin{equation}\label{eq.sigma2}
			\overline{\alpha}^{\sigma} = (\alpha_{4}, \alpha_{5}, \alpha_{6},\alpha_{1},\alpha_{2},\alpha_{3},\alpha_{10},\alpha_{11},\alpha_{12},\alpha_{7},\alpha_{8},\alpha_{9},\alpha_{13}^{'},\alpha_{14}^{'},\alpha_{15}^{'},\alpha_{16}^{'},\alpha_{17}^{'}),
		\end{equation}
		where $\alpha_{l}^{'}$ for $l=13,\dots,17$, are obtained as in Lemma \ref{lemma.sigma}.
	\end{remark}
	
	From the previous description, we can now prove that $\sigma$ defines an automorphism of $G$ of order $2$.
	
	\begin{proposition}\label{prop.sigma}
		The map $\sigma$ given by Definition $\ref{eq.sigma}$ is an automorphism of $G$ of order $2$.
	\end{proposition}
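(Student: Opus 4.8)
The plan is to work through the isomorphism $G \cong \mathbb{P}_3$ of Proposition~\ref{prop.iso} and to verify three things: that $\sigma$ extends to an endomorphism of $G$, that $\sigma^2 = \id$, and that $\sigma \ne \id$; the last two together force $\sigma$ to be a bijection of order exactly $2$.

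First I would argue that $\sigma$ is a homomorphism. Since $G$ is presented on the generators $a_i, b_i$ by the relations \ref{rel.1}--\ref{rel.8}, it suffices, by the universal property of such a presentation, to check that the assignment $a_i \mapsto b_i$, $b_i \mapsto a_i$ respects all defining relations. The key observation is that the whole presentation is symmetric under the involution interchanging the two families of generators: this swap sends $u_{ij}=[a_i,a_j]$ to $[b_i,b_j]=v_{ij}$ and $z_{ijk}$ to $t_{ijk}$, and fixes each $p_{ij}=[a_i,b_j]$ up to the symmetry and involutivity in \ref{rel.4}, exactly as prescribed in~\eqref{eq.sigma}. Relations \ref{rel.1}--\ref{rel.8} either are individually invariant under this swap (for instance $p_{ij}=p_{ji}$ and $p_{ij}^2=1$ in \ref{rel.4}) or occur in matched $a$/$b$ pairs (for instance $[a_i^2,G_n]=1$ and $[b_i^2,G_n]=1$ in \ref{rel.1}, or $u_{ij}^2=v_{ij}^2=1$ in \ref{rel.2}), so each relation is carried to a consequence of the relations. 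Hence $\sigma$ extends to an endomorphism of $G$, and its action on an arbitrary element in normal form is precisely the one computed in Lemma~\ref{lemma.sigma}; equivalently, in coordinates it is the map of Remark~\ref{rem.sigma}. As a cross-check one may instead verify directly, using the multiplication formula of Lemma~\ref{lemma.product}, that $(\overline{\alpha}\,\overline{\beta})^{\sigma} = \overline{\alpha}^{\sigma}\,\overline{\beta}^{\sigma}$.

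Next I would compute $\sigma^2$ from the coordinate description in Remark~\ref{rem.sigma}. The first twelve coordinates are permuted by the double transposition $(\alpha_1,\alpha_2,\alpha_3)\leftrightarrow(\alpha_4,\alpha_5,\alpha_6)$ and $(\alpha_7,\alpha_8,\alpha_9)\leftrightarrow(\alpha_{10},\alpha_{11},\alpha_{12})$, so they are fixed by $\sigma^2$. The only point requiring care is the behaviour of the nonlinear correction terms attached to the coordinates $13,\dots,17$. For coordinate $13$ the correction is $\alpha_1\alpha_5+\alpha_2\alpha_4$; applying $\sigma$ a second time substitutes the already-swapped values $\alpha_1\leftrightarrow\alpha_4$, $\alpha_2\leftrightarrow\alpha_5$, which reproduces the identical expression, so the two copies cancel modulo $2$ and the coordinate returns to $\alpha_{13}$. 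The same cancellation occurs for coordinates $14$ and $15$. For the pair $(16,17)$, where $\sigma$ interchanges the two coordinates up to cubic corrections $f$ and $g$ in $\alpha_1,\dots,\alpha_6$, one checks that $g$ equals the composite $f\circ\sigma$, so again the two corrections coincide and cancel modulo $2$, returning the coordinates to $\alpha_{16}$ and $\alpha_{17}$. Therefore $\sigma^2 = \id$.

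Finally, since $\sigma^2=\id$, the map $\sigma$ is its own inverse and hence bijective, so it is an automorphism; and because $a_1^{\sigma}=b_1\ne a_1$ we have $\sigma\ne\id$, so its order is exactly $2$. The main obstacle throughout is purely the bookkeeping of the $\mathbb{F}_2$-valued quadratic and cubic correction terms: the linear part of both the homomorphism check and the computation of $\sigma^2$ is immediate, and all the real content lies in verifying that these cocycle-type terms are compatible with the swap and cancel in pairs modulo $2$.
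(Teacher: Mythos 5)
Your proposal is correct, but the route to the homomorphism property is genuinely different from the paper's. The paper works entirely inside the coordinate model $\mathbb{P}_3 \cong G_3$: it takes $\overline{z}=\overline{x}\cdot\overline{y}$, applies the coordinate formula of Remark~\ref{rem.sigma} to $\overline{x}$, $\overline{y}$, $\overline{z}$, and verifies coordinate by coordinate (using Table~\ref{tab:mult.p3}) that $\overline{z}^{\sigma}=\overline{x}^{\sigma}\cdot\overline{y}^{\sigma}$ --- exactly the ``cross-check'' you mention in passing. You instead invoke the universal property of the presentation \ref{rel.1}--\ref{rel.8}: since the defining relations are symmetric under the swap $a_i\leftrightarrow b_i$, the swap extends to an endomorphism. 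This is cleaner and shorter, but it shifts the burden onto two points you should not gloss over: (i) the relations are stated in terms of the derived symbols $u_{ij},v_{ij},p_{ij},z_{ijk},t_{ijk}$, so one must actually compute the images of the defining words, e.g.\ $p_{ij}^{\sigma}=[b_i,a_j]=p_{ji}^{-1}=p_{ij}$ only after using $p_{ij}=p_{ji}$ and $p_{ij}^{2}=1$ from \ref{rel.4}, and similarly $z_{ijk}^{\sigma}=[p_{ji}^{-1},b_k]=t_{jik}^{-1}=t_{ijk}$ uses centrality and \ref{rel.6}--\ref{rel.7}; your phrase ``up to the symmetry and involutivity in \ref{rel.4}'' points at this but does not carry it out; (ii) the argument lives in the presented group $G_n$, whereas the paper's statements about the action on normal forms require the isomorphism $G_3\cong\mathbb{P}_3$ of Proposition~\ref{prop.iso}, which you correctly route through Lemma~\ref{lemma.sigma}. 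For the order, the paper simply declares $\overline{x}^{\sigma^2}=\overline{x}$ trivial from Equation~\ref{eq.sigma2}; your explicit cancellation of the quadratic and cubic correction terms modulo $2$ supplies the detail the paper omits, though once $\sigma$ is known to be an endomorphism it would suffice to observe that $\sigma^2$ fixes every generator. Net effect: your argument buys a conceptual explanation (the presentation is $\sigma$-symmetric) at the cost of a small amount of unwritten commutator bookkeeping, while the paper's buys a fully explicit but longer verification inside $\mathbb{P}_3$.
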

	\begin{proof}
		Let $\overline{x}, \overline{y}, \overline{z} = \overline{x}\cdot \overline{y} \in \mathbb{P}_{3} \cong G$ be written in the form $\overline{x} = (x_1, \ldots,x_{17})$, $\overline{y} = (y_1, \ldots,y_{17})$ and $\overline{z} = (z_1, \ldots,z_{17})$ , respectively. By Remark \ref{rem.sigma}, we obtain the images of these elements by $\sigma$. We suppose that $\overline{x}^\sigma = (x_1^{'}, \ldots,x_{17}^{'})$, $\overline{y}^\sigma = (y_1^{'}, \ldots,y_{17}^{'})$ and $\overline{z}^\sigma = (z_1^{'}, \ldots,z_{17}^{'})$ , respectively. 
		
		Using the product defined in Table \ref{tab:mult.p3}, we determine the products $\overline{x}\cdot \overline{y}$ and $\overline{x}^\sigma \cdot \overline{y}^\sigma$. We denote $\overline{w} = \overline{x}^\sigma \cdot \overline{y}^\sigma = (w_1, \ldots,w_{17})$. First, we need to prove that $z_i^{'} = w_i$, for all $i=1,\ldots,17$. Indeed:
		\begin{eqnarray*}
			z_1^{'} &=& z_4 = x_4 + y_4 = x_1^{'} + y_1^{'} = w_1;\\
			z_7^{'} &=& z_{10} = x_{10} + y_{10} + x_5 y_4 = x_7^{'} + y_7^{'} + x_2^{'} y_1^{'} = w_7; \\
			z_{13}^{'} &=&  z_{13} + z_1 z_5 + z_2 z_4 = x_{13} + y_{13} + \cancel{x_4 y_2} + \cancel{x_5 y_1} + x_1 x_5 + x_1 y_5 + \cancel{x_5 y_1} + y_1 y_5 +\\
			&\,& + x_2 x_4 + x_2 y_4 + \cancel{x_4 y_2} + y_2 y_4 = x_{13}^{'} + y_{13}^{'} + x_{4}^{'}y_{2}^{'} + x_{5}^{'}y_{1}^{'} = w_{13}; \\
			z_{16}^{'} &=&  z_{17} + z_1 z_5 z_6 + z_2 z_4 z_6 + z_3 z_4 z_5 = 
			x_{17} + y_{17} + x_{13}y_6 + x_{14}y_5 + x_{15}y_4  + \cancel{x_4 x_5 y_3} + \\
			&\,& + \cancel{x_4 x_6 y_2} + \cancel{x_5 x_6 y_1} + \cancel{x_4 y_2 y_6} + \cancel{x_4 y_3 y_5} + \cancel{x_5 y_1 y_6} + \cancel{x_5 y_3 y_4} + \cancel{x_6 y_1 y_5} + \cancel{x_6 y_2 y_4} +\\
			&\,& + (x_1 x_5 x_6 + x_1 x_6 y_5 + \cancel{x_5 x_6 y_1} + \cancel{x_6 y_1 y_5} + x_1 x_5 y_6 + x_1 y_5 y_6 + \cancel{x_5 y_1 y_6} + y_1 y_5 y_6) +\\ 
			&\,& + (x_2 x_4 x_6 + x_2 x_6 y_4  + \cancel{x_4 x_6 y_2} + \cancel{x_6 y_2 y_4} + x_2 x_4 y_6 + x_2 y_4 y_6 + \cancel{x_4 y_2 y_6} +  y_2 y_4 y_6) +\\ 
			&\,& + (x_3 x_4 x_5 + x_3 x_5 y_4  + \cancel{x_4 x_5 y_3} + \cancel{x_5 y_3 y_4} + x_3 x_4 y_5 + x_3 y_4 y_5 + \cancel{x_4 y_3 y_5} +  y_3 y_4 y_5) =\\ 
			&=& (x_{17} + x_1 x_5 x_6 + x_2 x_4 x_6 + x_3 x_4 x_5) + (y_{17} + y_1 y_5 y_6 + y_2 y_4 y_6 + y_3 y_4 y_5) + \\
			&\,& + (x_{13} + x_1 x_5 + x_2 x_4)y_6 + (x_{14} + x_1 x_6 + x_3 x_4)y_5 + (x_{15} + x_2 x_6 + x_3 x_5)y_4 + \\
			&\,& + x_1 y_5 y_6 + x_2 y_4 y_6 + x_3 y_4 y_5 = x_{16}^{'} + y_{16}^{'} + x_{13}^{'}y_{3}^{'} + x_{14}^{'}y_{2}^{'} + x_{15}^{'}y_{1}^{'} + x_{4}^{'}y_{2}^{'}y_{3}^{'} + \\
			&\,& + x_{5}^{'}y_{1}^{'}y_{3}^{'} + x_{6}^{'}y_{1}^{'}y_{2}^{'} = w_{16}. 
		\end{eqnarray*}
		The other terms are obtained in a similar way. At last, by Equation \ref{eq.sigma2}, it is trivial that $\overline{x}^{\sigma^2} = \overline{x}$. Therefore, $\sigma$ is an automorphism of $G$ of order $2$.
	\end{proof}
	
	
	In analogy with the previous case, we now compute the action of $\tau$ on an element of $G$ written in normal form.
	
	
	\begin{lemma}\label{lemma.tau}
		Let  $\tau$ be as defined in Definition $\ref{eq.tau}$ and $x \in G$ be written as in  $\ref{eq.normalform2}$. 
		Then:
		\begin{equation} \label{}
			x^\tau = a_{1}^{\alpha_1 - \alpha_4}a_{2}^{\alpha_2 - \alpha_5}a_{3}^{\alpha_3 - \alpha_6} b_{1}^{-\alpha_4}b_{2}^{-\alpha_5}b_{3}^{-\alpha_6} u_{12}^{\alpha_{7}^{'}}u_{13}^{\alpha_{8}^{'}}u_{23}^{\alpha_{9}^{'}
			}v_{12}^{\alpha_{10}}v_{13}^{\alpha_{11}}v_{23}^{\alpha_{12}}p_{12}^{\alpha_{13}^{'}}p_{13}^{\alpha_{14}^{'}}p_{23}^{\alpha_{15}^{'}} z_{123}^{\alpha_{16}^{'}} t_{123}^{\alpha_{17}},
		\end{equation}
		where \\
		$\alpha_{7}^{'} = \alpha_{7} + \alpha_{10} + \alpha_{13} + \alpha_{2}\alpha_{4}$, $\alpha_{8}^{'} = \alpha_{8} + \alpha_{11} + \alpha_{14} + \alpha_{3}\alpha_{4}$, $\alpha_{9}^{'} = \alpha_{9} + \alpha_{12} + \alpha_{15} + \alpha_{3}\alpha_{5}$, \linebreak
		$\alpha_{13}^{'} = \alpha_{13} + \alpha_{4}\alpha_{5} $, $\alpha_{14}^{'} = \alpha_{14} + \alpha_{4}\alpha_{6}$, $\alpha_{15}^{'} = \alpha_{15} + \alpha_{5}\alpha_{6}$ and  $\alpha_{16}^{'} = \alpha_{16} + \alpha_{17} + \alpha_{4}\alpha_{5}\alpha_{6} $.
	\end{lemma}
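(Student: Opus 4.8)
The plan is to follow the strategy used for Lemma~\ref{lemma.sigma}: apply $\tau$ factorwise to the normal form \ref{eq.normalform2} of $x$, and then reduce the resulting (non-normal) word back to normal form by means of the product formula of Lemma~\ref{lemma.product}. Concretely, I first apply Definition~\ref{eq.tau} to each factor. Since $[a_i,b_i]=1$ by relation~\ref{rel.1}, the generators $a_i$ and $b_i$ commute, so $(b_i^{\tau})^{\alpha_{3+i}} = (a_i^{-1}b_i^{-1})^{\alpha_{3+i}} = a_i^{-\alpha_{3+i}}b_i^{-\alpha_{3+i}}$. Likewise, because $u_{ij},v_{ij},z_{123},t_{123}$ are central (relations~\ref{rel.3} and~\ref{rel.8}) and all exponents involved are $0$ or $1$, I have $(u_{ij}v_{ij})^{\alpha}=u_{ij}^{\alpha}v_{ij}^{\alpha}$, $(u_{ij}p_{ij})^{\alpha}=u_{ij}^{\alpha}p_{ij}^{\alpha}$ and $(z_{123}t_{123})^{\alpha}=z_{123}^{\alpha}t_{123}^{\alpha}$. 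Moving all central factors to the right, this expresses
\[
x^{\tau} \;=\; a_1^{\alpha_1}a_2^{\alpha_2}a_3^{\alpha_3}\, g_b^{\tau}\, p_{12}^{\alpha_{13}}p_{13}^{\alpha_{14}}p_{23}^{\alpha_{15}}\,\nu,
\]
where $g_b^{\tau}=a_1^{-\alpha_4}b_1^{-\alpha_4}a_2^{-\alpha_5}b_2^{-\alpha_5}a_3^{-\alpha_6}b_3^{-\alpha_6}$ and $\nu=u_{12}^{\alpha_7+\alpha_{10}+\alpha_{13}}u_{13}^{\alpha_8+\alpha_{11}+\alpha_{14}}u_{23}^{\alpha_9+\alpha_{12}+\alpha_{15}}v_{12}^{\alpha_{10}}v_{13}^{\alpha_{11}}v_{23}^{\alpha_{12}}z_{123}^{\alpha_{16}+\alpha_{17}}t_{123}^{\alpha_{17}}$ collects the contributions that are already central.

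The core of the computation is to bring $g_b^{\tau}$ into the matrix normal form of Remark~\ref{rem.normalform}. I would do this by applying Lemma~\ref{lemma.product} twice to the product of the three matrix-form elements $a_i^{-\alpha_{3+i}}b_i^{-\alpha_{3+i}}$, each of which has a single nonzero column. The result is that $g_b^{\tau}$ equals
\[
\begin{pmatrix}
-\alpha_4 & -\alpha_5 & -\alpha_6\\
-\alpha_4 & -\alpha_5 & -\alpha_6\\
\alpha_4\alpha_5 & \alpha_4\alpha_6 & \alpha_5\alpha_6
\end{pmatrix}
\]
times the central factor $z_{123}^{\alpha_4\alpha_5\alpha_6}$; the third row (the $p$-exponents) comes from the terms $\alpha_4\beta_2,\alpha_4\beta_3,\alpha_5\beta_3$ of Lemma~\ref{lemma.product}, while the cubic factor $z_{123}^{\alpha_4\alpha_5\alpha_6}$ arises from the $\varepsilon_z$-term $\alpha_{12}^{3}\beta_3$ of the second multiplication. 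I would then multiply $a_1^{\alpha_1}a_2^{\alpha_2}a_3^{\alpha_3}$ by this matrix using Lemma~\ref{lemma.product} once more: the first two rows simply add, producing the exponents $\alpha_i-\alpha_{3+i}$ of $a_i$ and $-\alpha_{3+i}$ of $b_i$, while the central correction $\mu$ of Lemma~\ref{lemma.product} contributes precisely $u_{12}^{\alpha_2\alpha_4}u_{13}^{\alpha_3\alpha_4}u_{23}^{\alpha_3\alpha_5}$ (from the terms $\alpha_2\beta_1,\alpha_3\beta_1,\alpha_3\beta_2$ with $\beta_1=-\alpha_4$, $\beta_2=-\alpha_5$, reduced modulo~$2$). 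Collecting the $p$-exponents of $g_b^{\tau}$ with $p_{12}^{\alpha_{13}}p_{13}^{\alpha_{14}}p_{23}^{\alpha_{15}}$ and merging every central contribution into $\nu$ then yields exactly the stated values of $\alpha_7',\dots,\alpha_{16}'$ and leaves the $v$- and $t$-exponents unchanged.

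I expect the main obstacle to be the bookkeeping of the degree-three part. In the two applications of Lemma~\ref{lemma.product} used to assemble $g_b^{\tau}$, one must verify that the only surviving cubic term is $z_{123}^{\alpha_4\alpha_5\alpha_6}$; in particular the two contributions $\alpha_{12}^{3}\beta_6$ and $\alpha_4\alpha_5\beta_3$ to $\varepsilon_t$ both equal $\alpha_4\alpha_5\alpha_6$ and hence cancel modulo~$2$, so the $t$-exponent is unaffected, matching $\alpha_{17}'=\alpha_{17}$. One must also check that in the two remaining right-multiplications (by $a_1^{\alpha_1}a_2^{\alpha_2}a_3^{\alpha_3}$ on the left and by the $p$-block on the right) the terms $\varepsilon_z,\varepsilon_t$ vanish, because in each case one factor has zero second and third rows (respectively zero first-six coordinates), so that no new cubic corrections appear. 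Once these cancellations are confirmed, the remaining identifications are the routine modulo-$2$ collection of the linear terms, exactly as in the proof of Lemma~\ref{lemma.sigma}.
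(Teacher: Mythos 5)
Your proposal is correct and follows essentially the same route as the paper's proof: apply $\tau$ factorwise to the normal form, then renormalize the non‑central part $a_1^{\alpha_1}a_2^{\alpha_2}a_3^{\alpha_3}(a_1^{-\alpha_4}b_1^{-\alpha_4})(a_2^{-\alpha_5}b_2^{-\alpha_5})(a_3^{-\alpha_6}b_3^{-\alpha_6})$ via the multiplication rule of Lemma~\ref{lemma.product}, tracking the $u_{ij}$, $p_{ij}$ and $z_{123}$ corrections and the mod‑$2$ cancellation of the two $\alpha_4\alpha_5\alpha_6$ contributions to the $t$‑exponent. The only difference is organizational (you apply Lemma~\ref{lemma.product} in matrix form as a black box, whereas the paper carries out the same commutator moves by hand), and your intermediate values all agree with the paper's.
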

	
	\begin{proof}
		Consider $x\in G$ as described in the statement and apply $\tau$ to obtain the following result:
		\begin{equation} \label{eq.tau2}
			x^\tau = a_{1}^{\alpha_1}a_{2}^{\alpha_2}a_{3}^{\alpha_3} (a_{1}^{-\alpha_4} b_{1}^{-\alpha_4})(a_{2}^{-\alpha_5} b_{2}^{-\alpha_5})(a_{3}^{-\alpha_6} b_{3}^{-\alpha_6})\mu,
		\end{equation}
		where\\
		\[\mu = u_{12}^{\alpha_{7}}u_{13}^{\alpha_{8}}u_{23}^{\alpha_{9}
		}(u_{12}^{\alpha_{10}}v_{12}^{\alpha_{10}})(u_{13}^{\alpha_{11}}v_{13}^{\alpha_{11}})(u_{23}^{\alpha_{12}}v_{23}^{\alpha_{12}})(u_{12}^{\alpha_{13}} p_{12}^{\alpha_{13}})(u_{13}^{\alpha_{14}} p_{13}^{\alpha_{14}})(u_{23}^{\alpha_{15}} p_{23}^{\alpha_{15}}) z_{123}^{\alpha_{16}} (z_{123}^{\alpha_{17}} t_{123}^{\alpha_{17}}). \]
		
		We denote $g_1 = a_{1}^{\alpha_1}a_{2}^{\alpha_2}a_{3}^{\alpha_3} a_{1}^{-\alpha_4}$, $g_2 = b_{1}^{-\alpha_4} a_{2}^{-\alpha_5} b_{2}^{-\alpha_5} a_{3}^{-\alpha_6}$, $g_3 = b_{3}^{-\alpha_6}$ and so, we can write $x^\tau$ as
		\[x^\tau = g_1 g_2 g_3 \mu.\]
		
		To obtain the desired result, we will apply the method from Lemma \ref{lemma.product} in $g_1$, $g_2$ and $g_2 g_3$ and thus present the elements in the established order. In fact:
		\begin{enumerate}
			\item $g_1 = a_{1}^{\alpha_1 - \alpha_4}a_{2}^{\alpha_2}a_{3}^{\alpha_3} u_{12}^{\alpha_2 \alpha_4} u_{13}^{\alpha_3 \alpha_4};$
			\item $g_2 = \underbrace{b_{1}^{-\alpha_4} a_{2}^{-\alpha_5}} b_{2}^{-\alpha_5} a_{3}^{-\alpha_6} = a_{2}^{-\alpha_5}  b_{1}^{-\alpha_4}  \underbrace{p_{12}^{\alpha_4 \alpha_5} b_{2}^{-\alpha_5}} a_{3}^{-\alpha_6} = a_{2}^{-\alpha_5}  b_{1}^{-\alpha_4}   b_{2}^{-\alpha_5} \underbrace{p_{12}^{\alpha_4 \alpha_5} a_{3}^{-\alpha_6}} = \\$
			$= a_{2}^{-\alpha_5}  b_{1}^{-\alpha_4}  \underbrace{b_{2}^{-\alpha_5} a_{3}^{-\alpha_6}} p_{12}^{\alpha_4 \alpha_5} z^{\alpha_4 \alpha_5 \alpha_6} = a_{2}^{-\alpha_5} \underbrace{b_{1}^{-\alpha_4}    a_{3}^{-\alpha_6}} b_{2}^{-\alpha_5} p_{12}^{\alpha_4 \alpha_5} p_{23}^{\alpha_5 \alpha_6} z^{\alpha_4 \alpha_5 \alpha_6} =\\  $
			$= a_{2}^{-\alpha_5} a_{3}^{-\alpha_6}  b_{1}^{-\alpha_4} \underbrace{p_{13}^{\alpha_4 \alpha_6} b_{2}^{-\alpha_5}} p_{12}^{\alpha_4 \alpha_5} p_{23}^{\alpha_5 \alpha_6} z^{\alpha_4 \alpha_5 \alpha_6} =\\$
			$= a_{2}^{-\alpha_5} a_{3}^{-\alpha_6}  b_{1}^{-\alpha_4} b_{2}^{-\alpha_5}  p_{12}^{\alpha_4 \alpha_5} p_{13}^{\alpha_4 \alpha_6} p_{23}^{\alpha_5 \alpha_6} z^{\alpha_4 \alpha_5 \alpha_6} t^{\alpha_4 \alpha_5 \alpha_6};$
			\item $g_2 g_3 = a_{2}^{-\alpha_5} a_{3}^{-\alpha_6}  b_{1}^{-\alpha_4} b_{2}^{-\alpha_5} \underbrace{ p_{12}^{\alpha_4 \alpha_5} p_{13}^{\alpha_4 \alpha_6} p_{23}^{\alpha_5 \alpha_6} b_{3}^{-\alpha_6}} z^{\alpha_4 \alpha_5 \alpha_6} t^{\alpha_4 \alpha_5 \alpha_6}=\\ $
			$= a_{2}^{-\alpha_5} a_{3}^{-\alpha_6}  b_{1}^{-\alpha_4} b_{2}^{-\alpha_5} b_{3}^{-\alpha_6} p_{12}^{\alpha_4 \alpha_5} p_{13}^{\alpha_4 \alpha_6} p_{23}^{\alpha_5 \alpha_6}  z^{\alpha_4 \alpha_5 \alpha_6} \cancel{t^{2\alpha_4 \alpha_5 \alpha_6}}.$
			
			Since $G_3$ is nilpotent of class 3 and satisfies the Properties $(1)-(9)$, we get,
			
			\begin{eqnarray*}
				g_1 g_2 g_3 &=&  a_{1}^{\alpha_1 - \alpha_4}a_{2}^{\alpha_2}\underbrace{a_{3}^{\alpha_3} a_{2}^{-\alpha_5}} a_{3}^{-\alpha_6}  b_{1}^{-\alpha_4} b_{2}^{-\alpha_5} b_{3}^{-\alpha_6} p_{12}^{\alpha_4 \alpha_5} p_{13}^{\alpha_4 \alpha_6} p_{23}^{\alpha_5 \alpha_6} u_{12}^{\alpha_2 \alpha_4} u_{13}^{\alpha_3 \alpha_4}z^{\alpha_4 \alpha_5 \alpha_6} \\
				&=& a_{1}^{\alpha_1 - \alpha_4}a_{2}^{\alpha_2} a_{2}^{-\alpha_5}a_{3}^{\alpha_3} a_{3}^{-\alpha_6}  b_{1}^{-\alpha_4} b_{2}^{-\alpha_5} b_{3}^{-\alpha_6} p_{12}^{\alpha_4 \alpha_5} p_{13}^{\alpha_4 \alpha_6} p_{23}^{\alpha_5 \alpha_6} u_{12}^{\alpha_2 \alpha_4} u_{13}^{\alpha_3 \alpha_4}u_{23}^{\alpha_3 \alpha_5} z^{\alpha_4 \alpha_5 \alpha_6}.
			\end{eqnarray*}
			
		\end{enumerate}
		
		Therefore, \[x^\tau = a_{1}^{\alpha_1 - \alpha_4}a_{2}^{\alpha_2 - \alpha_5}a_{3}^{\alpha_3 - \alpha_6} b_{1}^{-\alpha_4}b_{2}^{-\alpha_5}b_{3}^{-\alpha_6} \mu^{'}, \]
		
		where
		\[\mu^{'} = u_{12}^{\alpha_{7}^{'}}u_{13}^{\alpha_{8}^{'}}u_{23}^{\alpha_{9}^{'}
		}v_{12}^{\alpha_{10}}v_{13}^{\alpha_{11}}v_{23}^{\alpha_{12}} p_{12}^{\alpha_{13}^{'}} p_{13}^{\alpha_{14}^{'}} p_{23}^{\alpha_{15}^{'}} z_{123}^{\alpha_{16}^{'}} t_{123}^{\alpha_{17}},\]
		
		and $\alpha_{i}^{'}$ are as we wanted.
	\end{proof}
	
	As a consequence, using the isomorphism $G_3 \cong \mathbb{P}_3$, 
	the action of $\tau$ can be described in terms of the exponents as follows.
	
	\begin{remark}\label{rem.tau}
		Let $\tau$ be given by Definition \ref{eq.tau}, and let $\overline{\alpha} = (\alpha_1, \ldots, \alpha_{17}) \in \mathbb{P}_3$. 
		It follows directly from Proposition \ref{prop.iso} and Lemma \ref{lemma.tau} that:  \begin{eqnarray*}\label{eq.tau2}
			\overline{\alpha}^{\tau} &=& (\alpha_{1} - \alpha_{4}, \alpha_{2} - \alpha_{5}, \alpha_{3} - \alpha_{6},-\alpha_{4},-\alpha_{5},-\alpha_{6},\alpha_{7} + \alpha_{10} + \alpha_{13} + \alpha_{2}\alpha_{4},\\ &\;& \alpha_{8} + \alpha_{11} + \alpha_{14} + \alpha_{3}\alpha_{4},  \alpha_{9} + \alpha_{12} + \alpha_{15} + \alpha_{3}\alpha_{5}, \alpha_{10},\alpha_{11},\alpha_{12}, \alpha_{13} + \alpha_{4}\alpha_{5},\\ 
			&\;&\alpha_{14} + \alpha_{4}\alpha_{6},\alpha_{15} + \alpha_{5}\alpha_{6},\alpha_{16} + \alpha_{17} + \alpha_{4}\alpha_{5}\alpha_{6},\alpha_{17}).
		\end{eqnarray*}
	\end{remark}
	
	Using $\tau$ as an auxiliary step, we define $\rho = \sigma \circ \tau$, 
	which will play the role of the second generator in the $S_3$-action. 
	
	\begin{remark}\label{remark:rho}
		Let $\sigma$ and $\tau$ as above. Note that, if we consider $\rho = \sigma \circ\tau$, we can express $\rho$ in the following way:
		
		\begin{equation}\label{eq.rho}
			a_{i}^{\rho} = b_{i}, b_{i}^{\rho} = a_{i}^{-1}b_{i}^{-1}, u_{ij}^{\rho} = v_{ij}, v_{ij}^{\rho} = u_{ij}v_{ij}, p_{ij}^\rho =v_{ij} p_{ij}, z_{ijk}^\rho = t_{ijk}, t_{ijk}^\rho = z_{ijk}t_{ijk}.  
		\end{equation}
	\end{remark}
	
	With $\rho$ defined in terms of $\sigma$ and $\tau$, the following proposition summarizes their main properties.
	
	\begin{proposition}\label{prop.rho}
		Let the maps $\sigma$ and $\tau$ be as defined in Definitions $\ref{eq.sigma}$ and $\ref{eq.tau}$, respectively. Then $\tau$ and $\rho = \sigma \circ\tau$ are automorphisms of $G$ of order $2$ and $3$, respectively.
	\end{proposition}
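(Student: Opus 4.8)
The plan is to work throughout inside $\mathbb{P}_3 \cong G$ via the isomorphism of Proposition~\ref{prop.iso}, representing $\tau$ by its coordinate action (Remark~\ref{rem.tau}) and multiplying elements with the rule of Table~\ref{tab:mult.p3}, exactly as was done for $\sigma$ in Proposition~\ref{prop.sigma}. Since $\sigma$ is already known to be an automorphism of order $2$, it suffices to establish three things: that $\tau$ is an automorphism of order $2$; that consequently $\rho = \sigma\circ\tau$ is an automorphism, being a composition of automorphisms; and that $\rho^3 = \mathrm{id}$ while $\rho \ne \mathrm{id}$, so that $\rho$ has order exactly $3$.

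First I would prove that $\tau$ is a homomorphism. Given $\overline{x},\overline{y}\in\mathbb{P}_3$ with $\overline{z}=\overline{x}\,\overline{y}$, I would verify $\overline{z}^{\,\tau}=\overline{x}^{\,\tau}\,\overline{y}^{\,\tau}$ coordinate by coordinate. For the six $\mathbb{Z}$-coordinates this is immediate, since there $\tau$ acts by the integral linear substitution $x_i\mapsto x_i-x_{i+3}$, $x_{i+3}\mapsto -x_{i+3}$ (for $i=1,2,3$), which commutes with coordinatewise addition. For the eleven $\mathbb{F}_2$-coordinates (positions $7$ through $17$) I would expand each side using Table~\ref{tab:mult.p3} together with the correction terms of Remark~\ref{rem.tau} and check that the two expansions agree modulo $2$; as in Proposition~\ref{prop.sigma}, the match comes from systematic cancellation of the mixed products. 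I expect the bookkeeping for coordinate~$17$ (the $t_{123}$-component), where many degree-two and degree-three monomials must be paired off, to be the main obstacle: it is purely mechanical but lengthy, so I would display one or two representative coordinates in full and assert that the rest follow identically.

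Next I would settle the orders. Applying the formula of Remark~\ref{rem.tau} twice yields $\tau^2=\mathrm{id}$: the first six coordinates return by the linear cancellation $(\alpha_i-\alpha_{i+3})-(-\alpha_{i+3})=\alpha_i$, and each $\mathbb{F}_2$-coordinate returns because its correction term is added a second time and $-1=1$ in $\mathbb{F}_2$; for instance position $13$ gives $\alpha_{13}+\alpha_4\alpha_5$ and then $+(-\alpha_4)(-\alpha_5)=\alpha_{13}\pmod 2$, while positions $10$–$12$ and $17$ are fixed by $\tau$ outright and the remaining positions follow analogously. In particular $\tau$ is its own inverse, hence bijective, and together with the homomorphism property this shows $\tau$ is an automorphism; since $\tau\ne\mathrm{id}$ it has order $2$. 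Consequently $\rho=\sigma\circ\tau$ is an automorphism.

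Finally, to show $\rho^3=\mathrm{id}$ it is enough, because $\rho$ is a homomorphism and $G=\langle a_1,a_2,a_3,b_1,b_2,b_3\rangle$, to check the identity on these generators using the explicit form of $\rho$ in Remark~\ref{remark:rho}. Here $a_i\mapsto b_i\mapsto a_i^{-1}b_i^{-1}\mapsto a_i$ and $b_i\mapsto a_i^{-1}b_i^{-1}\mapsto a_i\mapsto b_i$, where the middle step uses $[a_i,b_i]=1$ (relation~\ref{rel.1}), so that $(a_i^{-1}b_i^{-1})^{-1}=b_i a_i$ and $(a_i^{-1}b_i^{-1})^{\rho}=b_i^{-1}(b_i a_i)=a_i$. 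Thus $\rho^3$ fixes every generator, hence $\rho^3=\mathrm{id}$; and $\rho\ne\mathrm{id}$ since $a_i^{\rho}=b_i\ne a_i$, so $\rho$ has order exactly $3$, completing the proof.
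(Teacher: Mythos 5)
Your argument is correct, and the treatment of $\tau$ coincides with the paper's: both verify the homomorphism property coordinatewise in $\mathbb{P}_3$ via Table~\ref{tab:mult.p3} and Remark~\ref{rem.tau}, displaying a few representative coordinates, and both obtain $\tau^2=\mathrm{id}$ by applying the coordinate formula twice. Where you genuinely diverge is in the verification that $\rho$ has order $3$. The paper computes $\overline{x}^{\rho}$, $\overline{x}^{\rho^2}$ and $\overline{x}^{\rho^3}$ on an arbitrary element of $\mathbb{P}_3$, tracking all seventeen coordinates; you instead observe that $\rho^3$ is an endomorphism and check it only on the generators, via the cycle $a_i\mapsto b_i\mapsto a_i^{-1}b_i^{-1}\mapsto a_i$, which is enough because two homomorphisms agreeing on a generating set coincide. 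Your route is shorter and less error-prone (one small remark: the identity $(a_i^{-1}b_i^{-1})^{-1}=b_ia_i$ holds in any group, so the appeal to $[a_i,b_i]=1$ there is unnecessary, though harmless). The paper's longer computation is not wasted, however: the explicit coordinate formulas for $\overline{x}^{\rho}$ and $\overline{x}^{\rho^2}$ derived in its proof are reused immediately afterwards to compute $\overline{b}=\overline{a}^{\rho}$ and $\overline{c}=\overline{a}^{\rho^2}$ in the verification of the triality identity, so if you adopt your shortcut you would still need to produce those formulas separately for the subsequent theorem.
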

	\begin{proof}
		Let $\overline{x}, \overline{y}, \overline{z} = \overline{x}\cdot \overline{y} \in \mathbb{P}_{3} \cong G$ be expressed as $\overline{x} = (x_1, \ldots,x_{17})$, $\overline{y} = (y_1, \ldots,y_{17})$ and $\overline{z} = (z_1, \ldots,z_{17})$ , respectively. According to Remark \ref{rem.tau}, we determine the images of these elements under $\tau$. We assume that $\overline{x}^\tau = (x_1^{'}, \ldots,x_{17}^{'})$, $\overline{y}^\tau = (y_1^{'}, \ldots,y_{17}^{'})$ and $\overline{z}^\tau = (z_1^{'}, \ldots,z_{17}^{'})$ , respectively. 
		
		By referring to the Table \ref{tab:mult.p3}, we calculate the products $\overline{x}\cdot \overline{y}$ and $\overline{x}^\tau \cdot \overline{y}^\tau$. We denote $\overline{w} = \overline{x}^\tau \cdot \overline{y}^\tau = (w_1, \ldots,w_{17})$. We need to prove that $z_i^{'} = w_i$, for all $i=1,\ldots,17$ to show that $\tau$ is a homomorphism of $G_3$. Below, we provide the calculations for some of the terms; the remaining terms can be obtained in a similar manner.
		\begin{eqnarray*}
			z_1^{'} &=& z_1 - z_4 = (x_1 + y_1) - (x_4 + y_4) = x_1^{'} + y_1^{'} = w_1;\\
			z_4^{'} &=& -z_4 = -x_4 - y_4 = x_4^{'} + y_4^{'} = w_4; \\
			z_7^{'} &=& z_{7} + z_{10} + z_{13} + z_2 z_4 = x_7 + y_7 + x_{10} + y_{10} + x_{13} + y_{13}  + x_2 y_1+ \cancel{x_4 y_2} + \\
			&\,& + x_5 y_1  + x_5 y_4 + x_2 x_4 + x_2 y_4 + \cancel{x_4 y_2} + y_2 y_4= x_7^{'} + y_7^{'} + x_2^{'} y_1^{'}= w_7;\\
			z_{14}^{'} &=&  z_{14} + z_4 z_6  = x_{14} + y_{14}  + x_4 y_3 + x_6 y_1 + (x_4 + y_4)(x_6 + y_6) =\\
			&=&  x_{14}^{'} + y_{14}^{'} + x_{4}^{'}y_{3}^{'} + x_{6}^{'}y_{1}^{'} = w_{14}; \\
			z_{16}^{'} &=& z_{16} + z_{17} + z_4 z_5 z_6  = x_{16} + y_{16} + x_{17} + y_{17} + x_{13}y_3 + x_{14}y_2 + x_{15}y_1  + \\
			&\,& + x_4 y_2 y_3 + x_5 y_1 y_3 + x_6 y_1 y_2 + x_{13} y_6 + x_{14} y_5 + x_{15} y_4 + x_4 x_5 y_3 + x_4 x_6 y_2 + \\
			&\,& + x_5 x_6 y_1  + x_4(y_2 y_6 + y_5 y_3) + x_5(y_1 y_6 + y_4 y_3) +  x_6(y_1 y_5 + y_4 y_2) + \\ 
			&\,& +(x_4 + y_4)(x_5 + y_5)(x_6 + y_6) =   \\
			&=& x_{16}^{'} + y_{16}^{'} + x_{13}^{'}y_{3}^{'} + x_{14}^{'}y_{2}^{'} + x_{15}^{'}y_{1}^{'} + x_{4}^{'}y_{2}^{'}y_{3}^{'}  + x_{5}^{'}y_{1}^{'}y_{3}^{'} + x_{6}^{'}y_{1}^{'}y_{2}^{'} = w_{16}. 
		\end{eqnarray*}
		
		Moreover,  by Remark \ref{rem.tau}, we have
		
		\vspace{-0.3in}
		
		\begin{eqnarray*}\label{eq.tau2}
			\overline{x}^{\tau^2} &=& 
			(x_1 - x_4 + x_4, x_2 - x_5 + x_5, x_3 - x_6 + x_6, x_4, x_5, x_6, \\
			&\,&x_7 + x_{13}+ x_2 x_4 + (x_2 - x_5)(-x_4) + x_{13} + x_4 x_5, \\
			&\,&x_8 + x_{14}+ x_3 x_4 +  x_{14}   + (x_3 - x_6)(-x_4)  + x_4 x_6, \\
			&\,&x_9 + x_{15}+ x_3 x_5 + (x_3 - x_6)(-x_5) + x_{15} + x_5 x_6,\\
			&\,& x_{10}, x_{11}, x_{12}, x_{13} + x_4 x_5 + x_4 x_5, x_{14} + x_4 x_6 + x_4 x_6,\\
			&\,& x_{15} + x_5 x_6 + x_5 x_6, x_{16}  + x_{17} + x_4 x_5 x_6  + x_{17} + x_4 x_5 x_6, x_{17} ) = \overline{x}.
		\end{eqnarray*}
		\vspace{-0.25in}
		
		Thus $\overline{x}^{\tau^2} = \overline{x}$. Therefore, $\tau$ is an automorphism of $G$ with order $2$.
		
		It is clear that $\rho$ is a homomorphism of $G_3$. Now, to complete the proof, we apply the properties established in Remarks \ref{rem.sigma} and \ref{rem.tau} to explicitly determine $\rho$ and $\rho^2$, and verify that $\overline{x}^{\rho^3} = \overline{x}$, where $\overline{x}$ is the element previously defined.  
		
		This follows by considering $\overline{x}$ and its images  under $\rho$ and $\rho^2$, given by $\overline{x}^\rho = (x_1^{1}, \ldots,x_{17}^{1})$, and $\overline{x}^{\rho^2} = (x_1^{2}, \ldots,x_{17}^{2})$. We now proceed with the explicit computations:
		\vspace{-0.3in}
		
		\begin{eqnarray*}
			\overline{x}^\rho &=& \sigma(\tau(\overline{x})) = \sigma(x_{1} - x_{4}, x_{2} - x_{5}, x_{3} - x_{6},-x_{4},-x_{5},-x_{6},x_{7} + x_{10} + x_{13} + x_{2}x_{4},\\ &\;& x_{8} + x_{11} + x_{14} + x_{3}x_{4},  x_{9} + x_{12} + x_{15} + x_{3}x_{5}, x_{10},x_{11},x_{12},  x_{13} + x_{4}x_{5}, \\ &\;& x_{14} + x_{4}x_{6},x_{15} + x_{5}x_{6},x_{16} + x_{17} + x_{4}x_{5}x_{6},x_{17}) = \\
			&=& (-x_4, -x_5, -x_6, x_1 - x_4, x_2 - x_5, x_3 - x_6, x_{10}, x_{11}, x_{12}, x_7 + x_{10} + x_{13} + x_2 x_4, \\
			&\,& x_8 + x_{11} + x_{14} + x_3 x_4, x_9 + x_{12} + x_{15} + x_3 x_5, x_{13} + x_1 x_5 + x_2 x_4 + x_4 x_5, \\
			&\,& x_{14} + x_1 x_6 + x_3 x_4 + x_4 x_6, x_{15} + x_2 x_6 + x_3 x_5 + x_5 x_6, x_{17} + x_1 x_5 x_6 + x_2 x_4 x_6 + \\
			&\,& + x_3 x_4 x_5 + x_4 x_5 x_6, x_{16} + x_{17} + x_1 x_2 x_6 + x_2 x_3 x_4 + x_1 x_3 x_5 ) = (x_1^{1}, \ldots,x_{17}^{1}).
		\end{eqnarray*}
		
		\begin{eqnarray*}
			\overline{x}^{\rho^2} &=& \rho(x_1^{1}, \ldots,x_{17}^{1})  = (x_4 - x_1, x_5 - x_2, x_6 - x_3, -x_1, -x_2, -x_3, x_7 + x_{10} + x_{13} + \\ 
			&\,& + x_2 x_4, x_8 + x_{11} + x_{14} + x_3 x_4, x_9 + x_{12} + x_{15} + x_3 x_5, x_7, x_8, x_9, x_{13} + x_2 x_4 + \\ 
			&\,& + x_1 x_5 + x_1 x_2, x_{14} + x_3 x_4 + x_1 x_6 + x_1 x_3, x_{15} + x_3 x_5 + x_2 x_6 + x_2 x_3, x_{16} + x_{17} +\\
			&\,& + x_2 x_3 x_4 + x_3 x_4 x_5  + x_1 x_2 x_3 + x_1 x_2 x_6 + x_1 x_3 x_5 + x_1 x_5 x_6 + x_2 x_4 x_6, \\
			&\,& x_{16} + x_1 x_2 x_6 + x_2 x_3 x_4 + x_1 x_3 x_5) = (x_1^{2}, \ldots,x_{17}^{2}).\\
		\\
			\overline{x}^{\rho^3} &=& \rho(x_1^{2}, \ldots,x_{17}^{2}) = (-x_4^{2}, -x_5^{2}, -x_6^{2}, x_1^{2} - x_4^{2}, x_2^{2} - x_5^{2}, x_3^{2} - x_6^{2},  x_{10}^{2}, x_{11}^{2}, x_{12}^{2}, x_7^{2} + \\
			&\,& + x_{10}^{2} + x_{13}^{2} + x_2^{2} x_4^{2},  x_8^{2} + x_{11}^{2} + x_{14}^{2} + x_3^{2} x_4^{2}, x_9^{2} + x_{12}^{2} + x_{15}^{2} + x_3^{2} x_5^{2}, x_{13}^{2} + x_1^{2} x_5^{2} + \\
			&\,& + x_2^{2} x_4^{2} + x_4^{2} x_5^{2},  x_{14}^{2} + x_1^{2} x_6^{2} + x_3^{2} x_4^{2} + x_4^{2} x_6^{2}, x_{15}^{2} + x_2^{2} x_6^{2} + x_3^{2} x_5^{2} + x_5^{2} x_6^{2}, x_{17}^{2} + \\
			&\,& + x_1^{2} x_5^{2} x_6^{2} + x_2^{2} x_4^{2} x_6^{2}  + x_3^{2} x_4^{2} x_5^{2} + x_4^{2} x_5^{2} x_6^{2}, x_{16}^{2} + x_{17}^{2} + x_1^{2} x_2^{2} x_6^{2} + x_2^{2} x_3^{2} x_4^{2} +  \\
			&\,& + x_1^{2} x_3^{2} x_5^{2} ) = (x_1, \ldots,x_{17}) =  \overline{x}.
		\end{eqnarray*}
	\end{proof}
	
	


Using the automorphisms constructed above, we can establish that $G_3$ is a group with triality.

\begin{theorem}
	Consider the automorphisms $\sigma$ and $\rho$ given by Definitions $\ref{eq.sigma}$ and $\ref{eq.rho}$, respectively. Then  $G = G_3$ is a group with triality $S = <\sigma, \rho>$.
\end{theorem}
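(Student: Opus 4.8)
The plan is to verify the three defining conditions of a group with triality in turn. First, by Propositions~\ref{prop.sigma} and~\ref{prop.rho} we already know that $\sigma$ and $\rho=\sigma\circ\tau$ are automorphisms of $G$ with $\sigma^2=1$ and $\rho^3=1$. To obtain the relation $(\sigma\rho)^2=1$ I would exploit the auxiliary automorphism $\tau$: since $\rho=\sigma\tau$ and $\sigma^2=1$, the product $\sigma\rho$ collapses to $\tau$ (up to the composition convention), and Proposition~\ref{prop.rho} gives $\tau^2=1$; hence $(\sigma\rho)^2=1$. Consequently $S=\langle\sigma,\rho\rangle$ is a homomorphic image of the abstract group $\langle\sigma,\rho \mid \sigma^2=\rho^3=(\sigma\rho)^2=1\rangle\cong S_3$. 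As $\rho$ has order $3$ and $\sigma$ has order $2$, both $2$ and $3$ divide $|S|$, so $6\mid|S|\le 6$ and therefore $S\cong S_3$.

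It then remains to establish the triality identity $[x,\sigma]\,[x,\sigma]^{\rho}\,[x,\sigma]^{\rho^2}=1$ for every $x\in G$, and here I would pass to the coordinate model $G\cong\mathbb{P}_3$ of Proposition~\ref{prop.iso}. Writing $x=(x_1,\dots,x_{17})$, I would first compute $n(x):=[x,\sigma]=x^{-1}x^{\sigma}$ using the inverse formula from the proof of Lemma~\ref{lemma:p3group}, the description of $x^{\sigma}$ in Remark~\ref{rem.sigma}, and the multiplication rule of Remark~\ref{remark:product_P3}. A useful simplification is that the first six coordinates of $n(x)$ take the special form $(c_1,c_2,c_3,-c_1,-c_2,-c_3)$ with $c_i=x_{i+3}-x_i$; that is, the $a$-part and the $b$-part of $[x,\sigma]$ are opposite. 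Applying the explicit coordinate actions of $\rho$ and $\rho^2$ recorded in the proof of Proposition~\ref{prop.rho}, one checks directly that the first six coordinates of the triple product vanish, since they add with no correction and the three contributions cancel. The remaining task is to verify that coordinates $7$--$17$, computed modulo $2$ together with the correction terms $\tau_{ij}^{k}$ and $\tau_{ijk}^{\ell}$, also sum to zero.

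The main obstacle is precisely this last bookkeeping over the $\mathbb{F}_2$-part, and in particular coordinates $16$ and $17$ (the $z_{123}$- and $t_{123}$-components), whose product formulas in Table~\ref{tab:mult.p3} carry the cubic terms and are by far the most intricate. I would organize the computation by recording $n(x)$, then $n(x)^{\rho}$ and $n(x)^{\rho^2}$ as explicit $17$-tuples, and finally forming the two successive products via Remark~\ref{remark:product_P3}, cancelling terms in pairs modulo $2$ exactly as in the verifications already carried out in the proofs of Propositions~\ref{prop.sigma} and~\ref{prop.rho}. Since the automorphism property and the $S_3$-relations are already in hand, this direct verification completes the proof that $G_3$ is a group with triality $S=\langle\sigma,\rho\rangle\cong S_3$.
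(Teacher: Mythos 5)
Your proposal is correct and follows essentially the same route as the paper: orders of $\sigma$ and $\rho$ from Propositions~\ref{prop.sigma} and~\ref{prop.rho}, identification of $S$ with $S_3$, and then a direct coordinate verification of the triality identity in $\mathbb{P}_3$ by computing $\overline{a}=\overline{x^{-1}x^{\sigma}}$, $\overline{a}^{\rho}$, $\overline{a}^{\rho^2}$ and checking that their triple product vanishes against the formulas of Table~\ref{tab:assoc.P3} (the paper phrases this as verifying the linear system~\eqref{eq:system}). Your observation that the first six coordinates of $[x,\sigma]$ have the form $(c_1,c_2,c_3,-c_1,-c_2,-c_3)$ matches the paper's explicit computation of $\overline{a}$, and your argument that $(\sigma\rho)^2=\tau^2=1$ makes explicit what the paper leaves as ``direct computation.''
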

\begin{proof}
	Let $\sigma$ and $\rho$ as in the statement. By Propositions \ref{prop.sigma} and \ref{prop.rho},   $\sigma$ and $\rho$ are automorphisms of orders $2$ and $3$, respectively. Moreover, by direct computation, we observe that $S = <\sigma,\rho>$ is isomorphic to the symmetric group $S_3$ on three elements. 
	
	In order to prove that $G_3$ is a group with triality $S = \langle \sigma, \rho \rangle$, it remains to prove that for every $x \in G_3$, the Equation 
	
	\begin{equation}\label{eq.triality}
		(x^{-1}x^{\sigma})(x^{-1}x^{\sigma})^{\rho}(x^{-1}x^{\sigma})^{\rho^{2}} = 1
	\end{equation} 
	is satisfied.

	Let \( x \in G_3 \), and let \( x^{-1} \) and \( x^{\sigma} \) be, respectively, the inverse of \( x \) and its image under \( \sigma \). Since, by Proposition \ref{prop.iso}, \( {\mathbb P}_3 \) is isomorphic to \( G_3 \), we can represent these elements by elements of \( {\mathbb P}_3 \). 
	Moreover, as \( \mathbb{P}_3 = \mathbb{Z}^{6} \oplus \mathbb{F}_2^m \) with \( m = 3 {3 \choose 2} + 2 {3 \choose 3} \), we assume that the corresponding elements are, respectively, \( \overline{x} = (x_1,\dots,x_{17}) \), \( \overline{x^{-1}} = \overline{y} = (y_1,\dots,y_{17}) \), and \( \overline{x^\sigma} = (x_{1}^{'},\dots,x_{17}^{'}) \). According to Lemma \ref{lemma:p3group} and Remark \ref{rem.sigma} we have:
	
	\vspace{-0.1in}
	\begin{eqnarray*}
		\overline{y} &=& (-x_1, -x_2, -x_3, -x_4, -x_5,-x_6, x_7 + x_2 x_1, x_8 + x_3 x_1, x_9 + x_3 x_2, x_{10} + x_5 x_4,\\
		&\,&  x_{11} + x_6 x_4, x_{12} + x_6 x_5, x_{13} + x_4 x_2 + x_5 x_1, x_{14} + x_4 x_3 + x_6 x_1, x_{15} + x_5 x_3 + x_6 x_2,\\
		&\,& x_{16} + x_{13} x_3 + x_{14}x_2 + x_{15}x_1 + x_4 x_2 x_3 + x_5 x_1 x_3 + x_6 x_1 x_2, x_{17} + x_{13}x_6 + x_{14}x_5 \\
		&\,& + x_{15}x_4 + x_4 x_2 x_6 + x_5 x_4 x_3 + x_6 x_1 x_5);\\
		\overline{x^\sigma} &=& (x_4, x_5, x_6, x_1, x_2, x_3, x_{10}, x_{11}, x_{12}, x_{7}, x_8, x_9, x_{13} + x_1 x_5 + x_2 x_4, x_{14} + x_1 x_6 + x_3 x_4,\\
		&\,&  x_{15} + x_2 x_6 + x_3 x_5, x_{17} + x_1 x_5 x_6 + x_2 x_4 x_6 + x_3 x_4 x_5, x_{16} + x_1 x_2 x_6 + x_1 x_3 x_5 \\ &\,& + x_2 x_3 x_4).  \\
	\end{eqnarray*}
	
	\vspace{-0.2in}
	

	
	Let \( x, x^{-1} \), and \( x^\sigma \in G_3 \) be as defined above. 
	We define \( a = x^{-1}x^{\sigma} \), \( b = (x^{-1}x^{\sigma})^{\rho} \), and \( c = (x^{-1}x^{\sigma})^{\rho^{2}} \), with their corresponding elements in \( \mathbb{P}_3 \) denoted by \( \overline{a} = (a_1, \dots, a_{17}) \), \( \overline{b} = (b_1, \dots, b_{17}) \), and \( \overline{c} = (c_1, \dots, c_{17}) \).
	In order to explicitly determine these elements, we will use the values of \( \overline{y} \) and \( \overline{x^\sigma} \), applying the definition of multiplication in \( \mathbb{P}_3 \) as well as the automorphisms \( \rho \) and \( \rho^2 \).
	

	Our goal is to prove that \( abc = 1 \), or equivalently, \( \overline{a} \cdot \overline{b} \cdot \overline{c} = 0 \). To do this, it suffices to verify that the coordinates of \( \overline{a} \), \( \overline{b} \), and \( \overline{c} \) satisfy the Linear System \ref{eq:system}, which occurs since each product of three elements is explicitly computed in Table~\ref{tab:assoc.P3}, 
	from which the system is derived.

	\vspace{-0.4in}
	
	\begin{equation}
		\label{eq:system}
		\begin{aligned}
			\begin{cases}
				a_i + b_i + c_i = 0 \,\,\,\, (i = 1,...,6)\\
				a_7 + b_7 + c_7 + a_2 b_1 + (a_2 + b_2)c_1 = 0 \\
				a_8 + b_8 + c_8 + a_3 b_1 + (a_3 + b_3)c_1 = 0 \\
				a_9 + b_9 + c_9 + a_3 b_2 + (a_3 + b_3)c_2 = 0 \\
				a_{10} + b_{10} + c_{10} + a_5 b_4 + (a_5 + b_5)c_4 = 0 \\
				a_{11} + b_{11} + c_{11} + a_6 b_4 + (a_6 + b_6)c_4 = 0 \\
				a_{12} + b_{12} + c_{12} + a_6 b_5 + (a_6 + b_6)c_5 = 0 \\
				a_{13} + b_{13} + c_{13} + a_4 b_2 + a_5 b_1 + (a_4 + b_4)c_2 + (a_5 + b_5)c_1 = 0 \\
				a_{14} + b_{14} + c_{14} + a_4 b_3 + a_6 b_1 + (a_4 + b_4)c_3 + (a_6 + b_6)c_1 = 0 \\
				a_{15} + b_{15} + c_{15} + a_5 b_3 + a_6 b_2 + (a_5 + b_5)c_3 + (a_6 + b_6)c_2 = 0 \\
				a_{16} + b_{16} + c_{16} + a_{13}b_3 + a_{14}b_2 + a_{15}b_1 + a_4 b_2 b_3 + a_5 b_1 b_3 + a_6 b_1 b_2 \\ 
				\quad + (a_{13} + b_{13} + a_4 b_2 + a_5 b_1)c_3 + (a_4 + b_4)c_2 c_3  \\ 
				\quad + (a_{14} + b_{14} + a_4 b_3 + a_6 b_1)c_2 + (a_5 + b_5)c_1 c_3  \\ 
				\quad + (a_{15} + b_{15} + a_5 b_3 + a_6 b_2)c_1 + (a_6 + b_6)c_1 c_2 = 0 \\
				a_{17} + b_{17} + c_{17} + a_{13}b_6 + a_{14}b_5 + a_{15}b_4 + a_4 a_5 b_3 +  a_4 a_6 b_2 + a_5 a_6 b_1 \\ 
				\quad + (a_4 + b_4)(a_5 + b_5)c_3 +(a_4 + b_4)(a_6 + b_6)c_2 + (a_5 + b_5)(a_6 + b_6)c_1  \\ 
				\quad + a_{4}(b_2 b_6 + b_5 b_3) + a_{5}(b_1 b_6 + b_4 b_3) + a_{6}(b_1 b_5 + b_4 b_2)  \\
				\quad + (a_{13} + b_{13} + a_4 b_2 + a_5 b_1)c_6 + (a_4 + b_4)(c_2 c_6 + c_5 c_3) \\
				\quad + (a_{14} + b_{14} + a_4 b_3 + a_6 b_1)c_5 + (a_5 + b_5)(c_1 c_6 + c_4 c_3)  \\
				\quad + (a_{15} + b_{15} + a_5 b_3 + a_6 b_2)c_4 + (a_6 + b_6)(c_1 c_5 + c_4 c_2) = 0 
				
			\end{cases}
		\end{aligned}
	\end{equation}
	\vspace{-0.1in}
	
	Indeed, computing the values of \( \overline{a} \), \( \overline{b} \), and \( \overline{c} \), we obtain:
	
	\vspace{-0.3in}
	
	\begin{eqnarray*}
		\overline{a} &=& (-x_1 + x_4, -x_2 + x_5, -x_3 + x_6, -x_4 + x_1, -x_5 + x_2, -x_6 + x_3,\\
		&\,& x_7 + x_{10} + x_1 x_2 + x_2 x_4, x_8 + x_{11} + x_1 x_3 + x_3 x_4, x_9 + x_{12} + x_2 x_3 + x_3 x_5,\\
		&\,& x_7 + x_{10} + x_4 x_5 + x_1 x_5, x_8 + x_{11} + x_4 x_6 + x_1 x_6, x_9 + x_{12} + x_5 x_6 + x_2 x_6,\\
		&\,& 0, 0, 0, x_{16} + x_{17} + x_3 x_{13} + x_2 x_{14} + x_1 x_2 x_6 + x_1 x_{15} + x_2 x_3 x_4 + x_1 x_3 x_5 +\\
		&\,& +x_6 x_{13} + x_5 x_{14} + x_1 x_5 x_6 + x_4 x_{15} + x_3 x_4 x_5 + x_2 x_4 x_6 + x_4 x_5 x_6,\\
		&\,& x_{16} + x_{17} + x_6 x_{13} + x_5 x_{14} + x_4 x_{15} + x_3 x_{13} + x_2 x_3 x_4 + x_1 x_3 x_5 + x_2 x_{14} + x_1 x_{15}\\
		&\,&  + x_1 x_2 x_6 + x_4 x_5 x_6 + x_3 x_4 x_5 + x_2 x_4 x_6 + x_1 x_5 x_6 \ );\\
	\end{eqnarray*}
	\begin{eqnarray*}
		\overline{b} &=& (x_4 - x_1, x_5 - x_2, x_6 - x_3, -2x_1 + 2x_4, -2x_2 + 2x_5, -2x_3 + 2x_6, \\
		&\,& x_7 + x_{10} + x_4 x_5 + x_1 x_5, x_8 + x_{11} + x_4 x_6 + x_1 x_6, x_9 + x_{12} + x_5 x_6 + x_2 x_6, \\
		&\,& 0, 0, 0, x_4 x_5 + x_2 x_4 + x_1 x_5 + x_1 x_2, x_1 x_6 + x_4 x_6 + x_3 x_4 + x_1 x_3, x_3 x_5 + x_5 x_6 +\\ &\,& + x_2 x_6 + x_2 x_3, x_{16} + x_{17} + x_6 x_{13} + x_5 x_{14} + x_4 x_{15} + x_3 x_{13} + x_2 x_3 x_4 + x_1 x_3 x_5 + \\
		&\,&  + x_2 x_{14} + x_1 x_{15} + x_1 x_2 x_6 + x_4 x_5 x_6 + x_3 x_4 x_5 + x_2 x_4 x_6 + x_1 x_5 x_6, x_1 x_3 x_5 +  \\
		&\,& + x_1 x_2 x_3 + x_1 x_2 x_6 + x_3 x_4 x_5 + x_2 x_3 x_4 + x_2 x_4 x_6);
	\\   
		\overline{c} &=&  ( 2x_1 - 2x_4, 2x_2 - 2 x_5, 2x_3 - 2x_6, x_1 - x_4, x_2 - x_5, x_3 -x_6, 0,0,0,\\
		&\,& x_7 + x_{10} + x_1 x_2 + x_2 x_4, x_8 + x_{11} + x_1 x_3 + x_3 x_4, x_9 + x_{12} + x_2 x_3 + x_3 x_5, x_1 x_2 +\\ 
		&\,& + x_4 x_5 + x_2 x_4 + x_1 x_5, x_1 x_3 + x_4 x_6 + x_3 x_4 + x_1 x_6, x_2 x_3 + x_2 x_6 + x_3 x_5 + x_5 x_6,   \\
		&\,& x_2 x_4 x_6 + x_2 x_3 x_4 + x_1 x_2 x_6 + x_1 x_2 x_3 + x_4 x_5 x_6 + x_3 x_4 x_5 + x_1 x_5 x_6 + x_1 x_3 x_5,\\
		&\,& x_{16} + x_{17} + x_3 x_{13} + x_2 x_{14} + x_1 x_{15} + x_6 x_{13} + x_5 x_{14} + x_4 x_{15} + x_1 x_2 x_3 ).
	\end{eqnarray*}

	With simple calculations, we verify that the coordinates of \( \overline{a} \), \( \overline{b} \), and \( \overline{c} \) satisfy the above system, proving its consistency. 
	Thus, it follows that \( \overline{a} \cdot \overline{b} \cdot \overline{c} = 0 \), which implies \( abc = 1 \) in \( G_3 \). 
	Therefore, the triality condition given by Equation \eqref{eq.triality} holds for all \( x \in G_3 \), concluding that \( G_3 \) is a group with triality \( S = \langle \sigma, \rho \rangle \).
\end{proof}

\subsection{Groups $G_n$ with triality} \label{subsec:general.case}

In this subsection, we prove that the group $G_n$, defined by the construction above, is indeed a group with triality. We begin by introducing some notation. 

Let $I_n = \{1,\dots,n\}$ and $T = \{(i,j,k) \in I_n \mid 1 \leq i < j < k \leq n\}$. 
Clearly, $|T| = {n \choose 3} = \frac{n(n-1)(n-2)}{6}$. 
For each $\alpha = (i,j,k) \in T$, define
\[
A_{\alpha} = \langle a_i, a_j, a_k, b_i, b_j, b_k \rangle.
\]

We also set 

\[
A = \bigoplus_{\alpha \in T} A_{\alpha}.
\]

\begin{theorem}\label{theorem:gn.triality}
	Let $G_n$ be a nilpotent group of class~$3$ with $2n$ generators 
	\[
	\{a_1, \dots, a_n, b_1, \dots, b_n\}
	\]
	satisfying the relations \eqref{rel.1}--\eqref{rel.8}.  
	Let $T$, $A_{\alpha}$, and $A$ be as defined above.  
	If each $A_{\alpha}$ is a nilpotent group of class~$3$ with triality (satisfying the same relations as $G_3$), then there exists an embedding
	\[
	G_n \hookrightarrow A.
	\]
	As a consequence, $G_n$ is a group with triality.
\end{theorem}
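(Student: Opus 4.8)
The plan is to realize $G_n$ as a subdirect product of the copies $A_\alpha \cong G_3$ by means of the retractions that annihilate every generator whose index lies outside the triple $\alpha$. First, for each $\alpha = (i,j,k) \in T$ I would define a map $\pi_\alpha \colon G_n \to A_\alpha$ on the generating set by
\[
\pi_\alpha(a_s) = \begin{cases} a_s, & s \in \{i,j,k\},\\ 1, & s \notin \{i,j,k\},\end{cases}
\qquad
\pi_\alpha(b_s) = \begin{cases} b_s, & s \in \{i,j,k\},\\ 1, & s \notin \{i,j,k\},\end{cases}
\]
and check that it respects the defining relations \eqref{rel.1}--\eqref{rel.8}, so that it extends to a homomorphism. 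The verification is routine and rests on a single observation: every defining relation involves generators carrying at most three distinct indices. If all of these indices lie in $\{i,j,k\}$, the relation is mapped to the corresponding relation of $A_\alpha$, which holds because $A_\alpha$ satisfies the same relations as $G_3$. If some index $l$ occurring in the relation lies outside $\{i,j,k\}$, then $a_l$ and $b_l$ (hence every commutator $u_{lm}$, $v_{lm}$, $p_{lm}$, $z_{lmr}$, $t_{lmr}$ involving $l$) map to $1$, and both sides collapse to $1$. Assembling these maps gives a homomorphism
\[
\Phi = (\pi_\alpha)_{\alpha \in T} \colon G_n \longrightarrow A = \bigoplus_{\alpha \in T} A_{\alpha},
\]
where the sum is finite since $|T| = \binom{n}{3}$.

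The heart of the argument, and the step I expect to be the main obstacle, is the injectivity of $\Phi$. Here I would pass to the coordinate model $G_n \cong \mathbb{P}_n$ of Proposition~\ref{prop.iso}. Writing an element $g$ in normal form, its coordinates are indexed by single indices $s$ (the exponents of $a_s, b_s$), by pairs $\{i,j\}$ (the exponents of $u_{ij}, v_{ij}, p_{ij}$), and by triples $\{i,j,k\}$ (the exponents of $z_{ijk}, t_{ijk}$). Because $\pi_\alpha$ fixes the generators indexed within $\alpha$ and kills the rest, $\pi_\alpha(g)$ is precisely the subtuple of coordinates of $g$ supported on subsets of $\{i,j,k\}$. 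Consequently, $\Phi(g) = 1$ forces every coordinate of $g$ supported on a subset of some $\alpha \in T$ to vanish. Since $n \ge 3$, every single index, every pair, and every triple is contained in at least one $\alpha \in T$; hence all coordinates of $g$ vanish and $g = 1$. Thus $\Phi$ is the desired embedding $G_n \hookrightarrow A$.

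Finally, to deduce that $G_n$ is a group with triality, I would equip $G_n$ with the automorphisms $\sigma, \rho$ given by the formulas \eqref{eq.sigma} and \eqref{eq.rho} (now read on the full generating set), and let $S = \langle \sigma, \rho \rangle$ act on $A$ componentwise through the triality of each factor $A_\alpha \cong G_3$. Since $\sigma$ and $\rho$ are defined index by index, they commute with each retraction $\pi_\alpha$, so $\Phi$ is $S$-equivariant; that $\sigma,\rho$ are automorphisms of $G_n$ and that $S \cong S_3$ then follow either by the same computation as in the case $n=3$ or by transferring these identities through the injective equivariant map $\Phi$. As a finite direct sum of groups with triality $S$, the group $A$ satisfies the triality identity; applying $\Phi$ and using that it is an injective $S$-equivariant homomorphism, the identity $[g,\sigma][g,\sigma]^{\rho}[g,\sigma]^{\rho^{2}} = 1$ transfers from $A$ back to $G_n$ for every $g \in G_n$. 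This shows that $G_n$ is a group with triality, completing the proof.
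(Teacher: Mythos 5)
Your proposal is correct and follows essentially the same route as the paper: the componentwise retractions $\pi_\alpha$, the map into $\bigoplus_{\alpha\in T}A_\alpha$, injectivity from the fact that every normal-form coordinate is supported on at most three indices, and transfer of the triality identity back through the $S$-equivariant embedding. If anything, your injectivity step—phrased via the coordinate model $\mathbb{P}_n$, where $\pi_\alpha(g)$ is exactly the subtuple of coordinates supported on subsets of $\alpha$—is a more careful rendering of the paper's argument, which loosely says that ``any nontrivial element involves at most three indices.''
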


\begin{proof}
	Consider $G_n$, $A$ and $A_{\alpha}$ as defined in the theorem statement. 
	Since each $A_{\alpha}$ is a group with triality $S_{\alpha}=\langle \sigma_{\alpha}, \rho_{\alpha} \rangle$, 
	and $A$ is defined as the direct sum of such groups, it follows that $A$ is a group with triality.
	In fact, the direct sum $A=\displaystyle \bigoplus_{\alpha\in T}A_{\alpha}$ carries the triality action
	\[
	S=\langle \sigma,\rho\rangle \quad\text{with}\quad 
	\sigma=(\sigma_{\alpha})_{\alpha\in T},\;\; \rho=(\rho_{\alpha})_{\alpha\in T},
	\]
	acting componentwise.
	
	
	In order to construct the embedding, define
	\[
	\pi \colon G_n \longrightarrow A,\qquad
	\pi(g)=\big(\pi_\alpha(g)\big)_{\alpha\in T},
	\]
	where each $\pi_\alpha\colon G_n\to A_\alpha$ is defined on generators
	$x_i\in\{a_i,b_i\}$ by
	\[
	\pi_{\alpha}(x_i)=
	\begin{cases}
		x_i,& i\in\alpha,\\
		1,& i\notin\alpha,
	\end{cases}
	\]
	and extended uniquely to a homomorphism.
	
	In particular,
	\[
	\ker(\pi_\alpha)=\langle a_j,b_j\mid j\notin\alpha\rangle.
	\]
	
	The fact that $\ker \pi= 1$ follows from the equality 
	\[
	\ker \pi = \bigcap_{\alpha \in T}\ker(\pi_\alpha)
	\]
	and from the hypothesis that $G_n$ is nilpotent of class~$3$.
	Indeed, every element of $G_n$ can be written as a product of generators, their powers, 
	commutators of length two, and commutators of length three. 
	In particular, any nontrivial element involves at most three indices. 
	Hence there exists some $\alpha \in T$ containing these indices, so that 
	$\pi_\alpha$ does not send this element to $1$. 
	Therefore no nontrivial element lies in all kernels, and we conclude that 
	\[
	\bigcap_{\alpha \in T} \ker(\pi_\alpha) = 1.
	\]
	
	Thus $\pi$ is injective, and hence $G_n\hookrightarrow A$. 
	By the First Isomorphism Theorem, we conclude that
	\[
	G_n \cong \pi(G_n).
	\]

	Moreover, the componentwise action $S$ on $A$ preserves $\pi(G_n)$: indeed,
	for each $i$ we have $\sigma(\pi(a_i))=\pi(b_i)$ and $\sigma(\pi(b_i))=\pi(a_i)$
	(and similarly for $\rho$). Hence $\pi(G_n)$ is a group with triality.
	
	Since $G_n \cong \pi(G_n)$, we conclude that $G_n$ is also a group with triality.
\end{proof}

The following remarks will be useful in the subsequent sections.

\begin{remark}\label{rem:notation.embedding}
	For later convenience, we shall also write the embedding obtained in Theorem~\ref{theorem:gn.triality} as
	\[
	G_n \hookrightarrow \prod_{\alpha} G_3^{(\alpha)},
	\]
	since each component $A_{\alpha}$ is isomorphic to a copy of $G_3$. 
	This notation is equivalent to writing $A = \displaystyle\bigoplus_{\alpha} A_{\alpha}$.
\end{remark}

\begin{remark}\label{remark:additional.relations}
	Note that if we impose the additional relations $a_i^4 = b_i^4 = 1$ for all $i = 1,\dots,n$ 
	in the construction of $G_n$ and replace 
	\[
	\mathbb{P}_n = \mathbb{Z}^{2n} \oplus \mathbb{F}_2^m 
	\quad \text{by} \quad 
	\mathbb{P}_n^{(4)} = \mathbb{Z}_4^{\,2n} \oplus \mathbb{F}_2^m,
	\]
	we obtain the same results as in the case where the generators have infinite order, 
	such as the isomorphism between $G_n$ with $\mathbb{P}_n^{(4)}$ and the proof that $G_n$ is a group with triality. 
	
	In this case, the generators $a_i,b_i$ have order~$4$, 
	while all central elements (squares, commutators, and associators), 
	as well as the elements $p_{ij} = [a_i,b_j]$, have order~$2$. 
	Hence the group $G_n$ is finite of order
	\[
	|G_n| = 4^{2n} \times 2^{m} = 2^{4n + m},
	\]
	where $m = 3{n \choose 2} + 2{n \choose 3}$. 
	In particular, for $n = 3$, we obtain $|G_3| = 2^{23}$.
	
	For this group $G_n$ with the additional relations, we will determine in Section \ref{section:loop.corresponding} 
	the corresponding Moufang loop.
\end{remark}


\section{The Variety Generated by Code Loops}\label{section:variety.code.loops}

In this section, we introduce the variety ${\mathcal E}$, defined by identities satisfied by all code loops. The results presented guarantee the existence of a free loop in this variety and summarize fundamental properties of this loop, as well as its relation to code loops. Further details can be found in \cite{GP}.

\begin{definition}
	Let  ${\mathcal E}$ be the variety of Moufang loops with the following identities:
	\begin{align}
		&x^4 = 1, \,\left[x,y \right]^2 =1 , \, (x,y,z)^2 = 1, \nonumber \\
		&\left[x^2,y \right] = 1,\,    \left[\left[x,y\right],t \right] = 1,\, \left[(x,y,z),t \right] = 1, \label{identities.of.the.variety} \\
		&(x^2,y,z) = 1 ,  \,  (\left[x,y \right],z,t) = 1, \;\; ((x,y,z),t,s)=1. \nonumber
	\end{align}
\end{definition}

The following properties concerning associators, commutators, and squares hold in every Moufang loop.

\begin{proposition} \label{prop2.5}Let $F$ be a Moufang loop.
	\begin{enumerate}
		\item If $(x,y,z)^{2}=1$ and all the commutators and associators of $F$ are central, then
		\begin{eqnarray*}
			[xy,z]=[x,z][y,z](x,y,z).
		\end{eqnarray*}
		\item If the commutators and associators of $F$ are  central, then
		\begin{eqnarray*}
			(wx,y,z)=(w,y,z)(x,y,z).
		\end{eqnarray*}
		
		\item  If the squares and commutators of a Moufang loop $F$ are central, then
		\begin{eqnarray*}(xy)^{2}=x^{2}y^{2}[x,y].\end{eqnarray*}
	\end{enumerate}
\end{proposition}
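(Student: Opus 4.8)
The plan is to derive each identity directly from the defining relations $ab=(ba)(a,b)$ and $(ab)c=\bigl(a(bc)\bigr)(a,b,c)$, together with three standard facts about Moufang loops (see \cite{Bruck}): diassociativity, so that any two elements generate a subgroup and consequently the associator vanishes whenever two of its entries coincide; the alternating behaviour of the associator, so that a transposition of its arguments inverts it while a cyclic permutation fixes it; and the centrality hypotheses, which let every commutator and associator be moved across, and factored out of, an arbitrary product. Under these hypotheses the subgroup $C$ generated by all commutators and associators lies in $Z(F)$ and $F/C$ is an abelian group, so all the manipulations below take place ``modulo the central part''.

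For part (3) I would work inside the associative subloop $\langle x,y\rangle$, which is a group by diassociativity. Then $(xy)^2=x(yx)y$, and substituting the value of $yx$ coming from $xy=(yx)[x,y]$ and pulling the central commutator $[x,y]$ to the right collapses the product to $x^{2}y^{2}[x,y]$. No associator occurs because only two elements are involved.

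For part (1) the idea is to transport $z$ leftward through the product $xy$. Starting from $(xy)z$, one reassociation gives $x(yz)$ times the central associator $(x,y,z)$; commuting $z$ past $y$ produces $[y,z]$ and, after reassociating $x(zy)$ to $(xz)y$, the associator $(x,z,y)^{-1}$; commuting $z$ past $x$ produces $[x,z]$ and, reassociating $(zx)y$ to $z(xy)$, the associator $(z,x,y)$. Collecting the central factors yields $(xy)z=z(xy)\cdot[x,z][y,z]\,(x,y,z)(x,z,y)^{-1}(z,x,y)$, whence $[xy,z]=[x,z][y,z]\,(x,y,z)(x,z,y)^{-1}(z,x,y)$. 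The alternating property gives $(x,z,y)=(x,y,z)^{-1}$ and $(z,x,y)=(x,y,z)$, so the tail equals $(x,y,z)^{3}$; the hypothesis $(x,y,z)^{2}=1$ then reduces it to $(x,y,z)$, which is exactly where that extra assumption is needed.

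Part (2) asserts that the central associator is additive in its first argument, i.e.\ multilinear, and this I expect to be the main obstacle. A naive reassociation of $\bigl((wx)y\bigr)z$ and of $(wx)(yz)$ to the common right-associated product $w\bigl(x(yz)\bigr)$ expresses $(wx,y,z)$ as a product of associators that includes the ``mixed'' terms $(w,xy,z)$ and $(w,x,yz)$, in which an entire product occupies a single slot; unlike in part (1), these cannot be simplified by the alternating law alone, and disposing of them is tantamount to proving linearity in the remaining two slots as well. I would resolve this by invoking the general Moufang-loop expansion of the associator of a product (\cite{Bruck}), in which $(wx,y,z)$ equals $(w,y,z)(x,y,z)$ up to conjugations and iterated commutator/associator corrections, and then specializing: since every commutator and associator is central, all conjugations are trivial and all correction terms vanish, leaving exactly $(w,y,z)(x,y,z)$. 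The crux is thus to control these mixed-slot associators—equivalently, to upgrade linearity in one argument to full multilinearity—which is precisely where the centrality-driven vanishing of the correction terms is essential.
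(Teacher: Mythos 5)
The paper does not actually prove Proposition~\ref{prop2.5}: it is imported verbatim from \cite{GP} (whence the numbering ``2.5''), so there is no internal proof to compare yours against, and your attempt has to be judged on its own. Parts (1) and (3) of your outline are essentially correct. In (1) your bookkeeping gives $[xy,z]=[x,z][y,z]\,(x,y,z)(x,z,y)^{-1}(z,x,y)$, and with the skew-symmetry of a \emph{nuclear} associator (centrality gives nuclearity, which is what licenses both the alternating law and pulling the correction terms out of triple products) the tail is $(x,y,z)^3=(x,y,z)$; this also correctly explains why $(x,y,z)^2=1$ is hypothesized only in (1). In (3), note that substituting $yx=(xy)[x,y]^{-1}$ literally yields $(xy)^2=x^2y^2[x,y]^{-1}$; you still need the one-line observation that $[x,y]^2=[x^2,y]=1$ (bilinearity of the commutator in a class-$2$ group plus centrality of $x^2$) to replace $[x,y]^{-1}$ by $[x,y]$.

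The genuine gap is part (2). Your own analysis is accurate: computing $((wx)y)z$ two ways with central associators gives only the cocycle identity
\begin{equation*}
(w,x,yz)\,(wx,y,z)\;=\;(x,y,z)\,(w,xy,z)\,(w,x,y),
\end{equation*}
and eliminating the mixed-slot terms $(w,xy,z)$ and $(w,x,yz)$ is equivalent to the linearity you are trying to prove, so the argument is circular. Your proposed escape --- citing ``the general Moufang-loop expansion of the associator of a product'' in \cite{Bruck} --- is not a proof: no such ready-made expansion with identifiable ``correction terms that vanish under centrality'' is stated there, and in the code-loop literature the trilinearity of the associator for Moufang loops of class~$2$ is exactly the nontrivial content of this proposition, established by a genuine computation with the Moufang identities (equivalently, via the composition rules for the inner mappings $L_{x,y}$, which act on such a loop by multiplication by associators). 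As it stands, part (2) is reduced to an unverified black box rather than proved; you need either to carry out that Moufang-identity computation or to cite a precise statement (e.g.\ from \cite{GP} or the class-$2$ Moufang loop literature) that already contains it.
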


In what follows, we denote by $F_n$ a free loop in $\mathcal{E}$; its existence will be recalled after the next lemma.

\begin{lemma}\label{lemma.center}
	Let $F_{n}$ be a free loop in ${\mathcal E}$, with free generator set $\{x_{1},\dots,x_{n}\}$. Then for all $z \in {\mathcal Z}(F_{n})$, there are $\xi_{1},\dots,\xi_{n}$, $\xi_{ij}$, $\xi_{ijk}\in\{0,1\}$, with $i,j,k=1,\dots,n$ such that
	\begin{equation}\label{eq.center}
		z = \prod_{i=1}^{n}(x_{i}^{2})^{\xi_{i}}.\prod_{i<j}[x_{i},x_{j}]^{\xi_{ij}}.\prod_{i<j<k}(x_{i},x_{j},x_{k})^{\xi_{ijk}}. 
	\end{equation}
\end{lemma}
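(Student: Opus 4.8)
The plan is to identify the center $\mathcal{Z}(F_n)$ with the subloop $C$ generated by the squares, commutators, and associators of the free generators, and then to check that these generators exhaust $C$ in the shape required by \eqref{eq.center}. Concretely, set $C=\langle\, x_i^2,\ [x_i,x_j]\ (i<j),\ (x_i,x_j,x_k)\ (i<j<k)\,\rangle$. By the defining identities \eqref{identities.of.the.variety} of $\mathcal{E}$, every square, commutator, and associator is central and of order dividing $2$; hence $C$ is an elementary abelian $2$-group contained in $\mathcal{Z}(F_n)$, and any element of $C$ can be rewritten, by collecting commuting involutions and reducing exponents modulo $2$, in the ordered form on the right-hand side of \eqref{eq.center}. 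Thus it remains only to prove the reverse inclusion $\mathcal{Z}(F_n)\subseteq C$.

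First I would establish a normal form: every $w\in F_n$ can be written as $w=x_1^{\epsilon_1}\cdots x_n^{\epsilon_n}\,c$ with $\epsilon_i\in\{0,1\}$ and $c\in C$. The crucial intermediate claim is that $[u,v]$, $(u,v,w)$, and $w^2$ all lie in $C$ for every $u,v,w\in F_n$. This is proved by simultaneous induction on word length using Proposition~\ref{prop2.5}: part~(1) gives $[uv,t]=[u,t][v,t](u,v,t)$, part~(2) gives $(uv,t,s)=(u,t,s)(v,t,s)$, and part~(3) gives $(uv)^2=u^2v^2[u,v]$, so each such quantity reduces to a product of the corresponding quantities for shorter words, all lying in $C$ by induction (inverses cause no difficulty, since $x^4=1$ forces $x^{-1}=x^2x\equiv x\pmod C$, and Moufang diassociativity kills any associator with a repeated argument). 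Consequently the quotient $F_n/C$ is a Moufang loop in which all squares, commutators, and associators vanish, i.e.\ an elementary abelian $2$-group generated by the images $\bar x_1,\dots,\bar x_n$; choosing a coset representative yields the desired normal form.

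With the normal form available, take $z\in\mathcal{Z}(F_n)$ and write $z=u\,c$ with $u=x_1^{\epsilon_1}\cdots x_n^{\epsilon_n}$ and $c\in C$. Since $c$ is central, $u=zc^{-1}$ is central too, and the problem reduces to showing that a reduced generator word $u$ can be central only if all $\epsilon_i=0$. Suppose $\epsilon_j=1$ for some $j$ (here $n\ge 2$, the case $n=1$ being degenerate). I would exploit freeness: choose a nonabelian loop $M\in\mathcal{E}$ with elements $p,q$ satisfying $[p,q]\ne 1$ — such $M$ exists because $\mathcal{E}$ is generated by code loops, not all of which are commutative (for instance $M=F_2$ works, as $[p,q]\ne1$ there) — and let $\phi\colon F_n\to M$ be the homomorphism with $\phi(x_j)=p$, $\phi(x_k)=q$ for a fixed $k\ne j$, and $\phi(x_i)=1$ otherwise. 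Then $\phi(u)$ is a nontrivial word in $p,q$ involving $p$, and using Proposition~\ref{prop2.5}(1) together with diassociativity one checks that $\phi(u)$ fails to commute with $\phi(x_k)=q$ when $k\notin\{i:\epsilon_i=1\}$, and with $\phi(x_j)=p$ otherwise. Since $\phi([u,x_\ell])=[\phi(u),\phi(x_\ell)]\ne 1$ for the appropriate $\ell$, the element $[u,x_\ell]$ is nontrivial in $F_n$, contradicting the centrality of $u$. Hence all $\epsilon_i=0$, so $u=1$ and $z=c\in C$, which is exactly the form \eqref{eq.center}.

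The main obstacle is precisely this last step: centrality is not preserved by homomorphisms and so cannot be transported directly; the point is rather to detect \emph{non}-centrality through the commutator, which \emph{is} functorial, and to guarantee a witnessing non-commutation in the free loop via its universal property. An alternative to the homomorphism argument would be to quote the explicit basis of $F_n$ from \cite{GP}, which renders the independence of the squares, commutators, and associators — and hence the non-centrality of any nontrivial reduced generator word — immediate.
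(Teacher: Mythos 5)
The paper itself offers no proof of this lemma: Section~\ref{section:variety.code.loops} explicitly defers to \cite{GP} ("Further details can be found in \cite{GP}"), and the statement is quoted there without argument. So there is no in-paper proof to compare against, and your proposal has to stand on its own. It does: the decomposition into (i) $C\subseteq\mathcal{Z}(F_n)$ with elements of $C$ already in the shape \eqref{eq.center}, (ii) the normal form $w=x_1^{\epsilon_1}\cdots x_n^{\epsilon_n}c$ obtained by showing via Proposition~\ref{prop2.5} that all squares, commutators and associators of arbitrary elements lie in $C$ (so that $F_n/C$ is elementary abelian), and (iii) the detection of non-centrality of a nontrivial generator word through a homomorphism onto a noncommutative member of $\mathcal{E}$, is a complete and correct route, and your closing remark about functoriality of the commutator versus non-functoriality of centrality identifies exactly the right subtlety. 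Two small points deserve attention. First, the parenthetical "$M=F_2$ works, as $[p,q]\ne 1$ there" is circular as written — knowing $[x_1,x_2]\ne 1$ in the free object is equivalent to exhibiting a member of $\mathcal{E}$ with a noncommuting pair; your preceding justification (a concrete noncommutative code loop, e.g.\ $D_8$ or an extraspecial $2$-group, lies in $\mathcal{E}$) is the one to keep. Second, the case $n=1$ is not merely "degenerate" but an actual counterexample to the statement as written ($F_1\cong\mathbb{Z}_4$ is its own center, yet $x_1$ is not of the form \eqref{eq.center}); this is a defect of the lemma's phrasing rather than of your argument, and is harmless here since the paper only invokes the lemma for $n\ge 3$, but it should be stated as a hypothesis rather than waved away.
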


The structure of the center will be useful when determining the Moufang loop corresponding to $G_n$.

According to \cite[Theorem~2.7, p.~170]{GP}, a free loop of rank $n$ exists in the variety $\mathcal{E}$. 
We denote this loop by $\mathcal{F}_n$.  
Moreover, \cite[Corollary~2.9, p.~171]{GP} proves that $\mathcal{E}$ is generated by the set of all code loops. 
In what follows, we write $\mathcal{F}_n = \langle x_1,\dots,x_n\rangle$. The next proposition presents some properties related to this free loop.

\begin{proposition} [\cite{GP}, Corollary 2.8, p. 171]\par\noindent
	
	\begin{enumerate}
		\item For any code loop $L$ of rank $n$, there is a homomorphism $\varphi: \mathcal{F}_n \to L$ such that $\varphi(\mathcal{F}_n) = L$ and $\text{codim}_{\mathcal{Z}(\mathcal{F}_n)} \ker(\varphi) = 1.$
		\item For all $\mathbb{F}_2$-subspaces $T \subset \mathcal{Z}(\mathcal{F}_n)$ of codimension $1$, there exists a code loop $L(T) = \mathcal{F}_n / T.$
		\item The loop $L(T)$ is a group if and only if $T \supseteq U_{n} = (\mathcal{F}_{n}, \mathcal{F}_{n}, \mathcal{F}_{n}) = \mathcal{F}_{n} \cap U$.
	\end{enumerate}
	
\end{proposition}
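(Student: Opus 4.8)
The plan is to establish the three parts in turn, using the universal property of the free loop $\mathcal F_n$, the description of its center in Lemma~\ref{lemma.center}, and the multilinearity formulas of Proposition~\ref{prop2.5}. The basic observation, valid for every loop in $\mathcal E$, is that all squares, commutators and associators are central of order~$2$; hence $\mathcal Z(\mathcal F_n)$ is an elementary abelian $2$-group, i.e.\ an $\mathbb F_2$-vector space spanned by the $x_i^2$, the $[x_i,x_j]$ and the $(x_i,x_j,x_k)$, and $\mathcal F_n/\mathcal Z(\mathcal F_n)$ is elementary abelian of rank~$n$.

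For part~(1), fix a minimal generating set $y_1,\dots,y_n$ of the code loop $L$. As $L\in\mathcal E$, the universal property extends $x_i\mapsto y_i$ to a homomorphism $\varphi\colon\mathcal F_n\to L$, surjective because the $y_i$ generate. Let $A=\{1,c\}$ be the distinguished central subgroup of order~$2$, so that by Chein--Goodaire every square, commutator and associator of $L$ lies in $A$; thus $\varphi(\mathcal Z(\mathcal F_n))\subseteq A$. Using the square identity $(xy)^2=x^2y^2[x,y]$ of Proposition~\ref{prop2.5}(3), every square in $L$ is a product of generator-squares and commutators, so $c\in\varphi(\mathcal Z(\mathcal F_n))$ and therefore $\varphi(\mathcal Z(\mathcal F_n))=A$. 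Consequently $\varphi$ induces a surjection $\bar\varphi\colon\mathcal F_n/\mathcal Z(\mathcal F_n)\to L/A$ of elementary abelian $2$-groups, both of rank~$n$ (the target has rank~$n$ by minimality of $\{y_i\}$), hence an isomorphism; this forces $\ker\varphi\subseteq\varphi^{-1}(A)\subseteq\mathcal Z(\mathcal F_n)$. Applying rank--nullity over $\mathbb F_2$ to $\varphi|_{\mathcal Z(\mathcal F_n)}\colon\mathcal Z(\mathcal F_n)\to A$, whose image is one-dimensional and whose kernel is $\ker\varphi$, yields $\operatorname{codim}_{\mathcal Z(\mathcal F_n)}\ker\varphi=1$.

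For part~(2), any $\mathbb F_2$-subspace $T\subseteq\mathcal Z(\mathcal F_n)$ is a central and hence normal subloop, so $L(T)=\mathcal F_n/T$ is a Moufang loop. Setting $A=\mathcal Z(\mathcal F_n)/T$, the codimension~$1$ hypothesis makes $A$ central of order~$2$, and $L(T)/A\cong\mathcal F_n/\mathcal Z(\mathcal F_n)$ is an elementary abelian $2$-group; by definition $L(T)$ is a code loop.

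For part~(3), recall that a Moufang loop is a group precisely when all its associators are trivial. By Proposition~\ref{prop2.5}(2) the associators of $\mathcal F_n$ are central and multilinear, so the associator subloop $U_n=(\mathcal F_n,\mathcal F_n,\mathcal F_n)$ is exactly the $\mathbb F_2$-span of the generator associators $(x_i,x_j,x_k)$. An associator becomes trivial in $L(T)=\mathcal F_n/T$ if and only if it lies in $T$; hence $L(T)$ is associative, i.e.\ a group, if and only if every associator lies in $T$, that is, if and only if $U_n\subseteq T$. I expect the main difficulty to be the codimension count in part~(1): one must check simultaneously that $\varphi$ does not collapse the entire center---so that $\varphi(\mathcal Z(\mathcal F_n))$ is genuinely one-dimensional---and that $\ker\varphi$ is contained in $\mathcal Z(\mathcal F_n)$, both of which rest on identifying the image with $A$ and on the induced map $\bar\varphi$ being an isomorphism of rank-$n$ elementary abelian $2$-groups.
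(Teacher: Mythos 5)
The paper does not prove this proposition; it is imported verbatim from \cite{GP} (Corollary~2.8), so there is no in-paper argument to compare yours against. Judged on its own, your strategy --- universal property of $\mathcal F_n$, the description of $\mathcal Z(\mathcal F_n)$ from Lemma~\ref{lemma.center}, and the expansion identities of Proposition~\ref{prop2.5} --- is the natural one, and parts (2) and (3) are essentially complete (you reasonably sidestep the identification $U_n=\mathcal F_n\cap U$, whose notation is not defined in this paper; for the span claim you should note that linearity of the associator in \emph{all three} arguments follows from Proposition~\ref{prop2.5}(2) together with the skew-symmetry of Moufang associators).

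There is one genuine gap, in part (1), at the step ``so $c\in\varphi(\mathcal Z(\mathcal F_n))$.'' The identity $(xy)^2=x^2y^2[x,y]$ shows only that $L^2\subseteq\varphi(\mathcal Z(\mathcal F_n))$; it does not show that the nonidentity element $c$ of $A$ is a square. A code loop can have $L^2=1$ while some commutator or associator is nontrivial (and conversely), so your argument as written does not rule out $\varphi(\mathcal Z(\mathcal F_n))=1$, which would make the codimension $0$ rather than $1$. The clean repair is a counting argument: let $N\le L$ be the subloop generated by all $y_i^2$, $[y_i,y_j]$, $(y_i,y_j,y_k)$; then $N=\varphi(\mathcal Z(\mathcal F_n))\subseteq A$, $N$ is central, and $L/N$ is an elementary abelian $2$-group generated by $n$ elements, hence $|L/N|\le 2^n$. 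Since a code loop of rank $n$ has $|L|=2^{n+1}$, this forces $N\ne 1$, i.e.\ $N=A$, and the rest of your codimension count (the induced isomorphism $\bar\varphi$ and rank--nullity on $\varphi|_{\mathcal Z(\mathcal F_n)}$) goes through. Note that this fix, and indeed the truth of part (1), hinges on ``rank $n$'' meaning $|L|=2^{n+1}$ (equivalently $L/A\cong\mathbb F_2^n$); for degenerate examples such as elementary abelian $2$-groups the codimension statement fails, so that normalization from \cite{GP} must be assumed.
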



\section{The Moufang Loop corresponding to $G_n$}\label{section:loop.corresponding}

In Section \ref{section:group.with.triality}, we constructed a nilpotent group $G_n$ of class 3 with triality and $2n$ generators. As observed in Remark~\ref{remark:additional.relations}, to determine the corresponding Moufang loop we consider $G_n$ with the additional relations $a_i^4 = b_i^4 = 1$ for all $i=1,\dots,n$. Our main goal in this section is to show that the loop associated with this group is precisely the free loop $\mathcal{F}_n$ in the variety $\mathcal{E}$. First, we prove that $\mathcal{F}_n$ can be embedded into a direct product of copies of a free code loop of rank $3$.

\subsection{Embedding of \( \mathcal{F}_n \)}

Consider $M_3$ as a free code loop with minimal generators $a,b$ and $c$. Then, by the properties of code loops, we have that $$M_3 = 
\langle a, b, c, [a,b], [a,c], [b,c], a^2, b^2, c^2, (a,b,c) \rangle$$ and, consequently,  $|M_3| = 2^{10}$.
Let  $T  = \{(i,j,k) \ | \ 1\leq i < j < k \leq n\}$ be as defined before. For $\alpha = (i,j,k) \in T$, we denote  $M_{3}^{\alpha} = \langle x_i, x_j, x_k \rangle$ as a free  code loop in the variety $\mathcal{E}$.

In analogy with the group case (Theorem~\ref{theorem:gn.triality}), 
we now show that the free loop $\mathcal{F}_n$ admits an embedding into a direct product of copies of $M_3$.

\begin{lemma}\label{lemma:loop.embedded}
	Let $\mathcal{E}$ be the variety generated by the set of all code loops,  $\mathcal{F}_n \in \mathcal{E}$ a free loop with $n$ generators. If $M_3$ is a free code loop with $3$ generators, then $\mathcal{F}_n$ can be embedded in a direct product of copies of $M_3$. 
\end{lemma}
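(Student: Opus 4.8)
The plan is to mirror the group-theoretic argument of Theorem~\ref{theorem:gn.triality}, replacing the direct sum of copies of $G_3$ by a direct product of copies of $M_3$ and using the universal property of the free loop $\mathcal{F}_n$ in place of that of $G_n$. For each triple $\alpha=(i,j,k)\in T$ I would define a projection $\pi_\alpha\colon \mathcal{F}_n\to M_3^{\alpha}$ on the free generators by $\pi_\alpha(x_\ell)=x_\ell$ if $\ell\in\alpha$ and $\pi_\alpha(x_\ell)=1$ otherwise. Since $M_3^{\alpha}$ is a code loop and hence lies in $\mathcal{E}$, this assignment extends uniquely to a loop homomorphism by freeness of $\mathcal{F}_n$. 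Assembling these gives
\[
\pi\colon \mathcal{F}_n\longrightarrow \prod_{\alpha\in T}M_3^{\alpha},\qquad
\pi(w)=\bigl(\pi_\alpha(w)\bigr)_{\alpha\in T},
\]
and it suffices to prove that $\pi$ is injective, i.e. that $\ker\pi=\bigcap_{\alpha\in T}\ker\pi_\alpha=1$.

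The key ingredient is a normal form for elements of $\mathcal{F}_n$. Using the identities \eqref{identities.of.the.variety} together with Proposition~\ref{prop2.5} and Lemma~\ref{lemma.center}, every $w\in\mathcal{F}_n$ can be written uniquely as
\[
w=\prod_{i} x_i^{a_i}\cdot\prod_{i<j}[x_i,x_j]^{b_{ij}}\cdot\prod_{i<j<k}(x_i,x_j,x_k)^{c_{ijk}},
\]
where $a_i\in\{0,1,2,3\}$ and $b_{ij},c_{ijk}\in\{0,1\}$, since all squares, commutators, and associators are central, their higher products vanish, and no nontrivial factor involves more than three distinct indices. Under $\pi_\alpha$ with $\alpha=(i_0,j_0,k_0)$, every factor built from indices outside $\alpha$ is killed, so $\pi_\alpha(w)$ reduces to the corresponding normal-form element of $M_3^{\alpha}$ in the three surviving generators. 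Because $M_3^{\alpha}$ is a free code loop of rank~$3$ (so $M_3^{\alpha}\cong M_3$, with the generators, commutators, square, and associator among $i_0,j_0,k_0$ forming independent factors of its own normal form), the equation $\pi_\alpha(w)=1$ forces $a_{i_0}=a_{j_0}=a_{k_0}=0$, $b_{i_0j_0}=b_{i_0k_0}=b_{j_0k_0}=0$, and $c_{i_0j_0k_0}=0$.

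Finally, I would let $\alpha$ range over all of $T$. Since $n\ge 3$, every index $i$ lies in some triple, every pair $i<j$ lies in some triple, and every triple $(i,j,k)$ is itself in $T$; hence the vanishing conditions above accumulate to force all exponents $a_i$, $b_{ij}$, $c_{ijk}$ to be zero, whence $w=1$. This proves $\ker\pi=1$ and therefore $\mathcal{F}_n\hookrightarrow\prod_{\alpha}M_3^{\alpha}$. I expect the main obstacle to be the careful justification of the uniqueness of the normal form and of the claim that $\pi_\alpha$ sends the basis factors of $\mathcal{F}_n$ indexed within $\alpha$ to independent basis factors of $M_3^{\alpha}$; this is precisely where the freeness of $\mathcal{F}_n$ and the structural results of \cite{GP} (Lemma~\ref{lemma.center} and Proposition~\ref{prop2.5}) are essential, whereas the concluding index-counting argument is routine.
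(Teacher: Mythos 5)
Your proposal is correct and follows essentially the same route as the paper: the same componentwise projections $\pi_\alpha$ onto copies of $M_3$ indexed by triples, extension by freeness, and the observation that an element of the kernel, written via Lemma~\ref{lemma.center} as a product of squares, commutators, and associators, must have all exponents killed because each such factor survives in at least one coordinate $M_3^{\alpha}$. The only cosmetic difference is that the paper first shows $\ker\varphi\subseteq\mathcal{Z}(\mathcal{F}_n)$ and then applies the central normal form, whereas you invoke a normal form for arbitrary elements (whose \emph{uniqueness} in $\mathcal{F}_n$ is not actually needed — only its existence, together with the faithfulness of the normal form in $M_3$, which has order $2^{10}$ — so the argument is not circular).
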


\begin{proof} Let  $T  = \{(i,j,k)| 1\leq i < j < k \leq n\}$ and $L_n =  \displaystyle \prod_{\alpha \in T} M_{3}^\alpha$, where $M_{3}^{\alpha} = \langle x_i, x_j, x_k \rangle$ is a free  code loop in the variety $\mathcal{E}$, for each $\alpha = (i,j,k) \in T$. Consider the map $\varphi: \mathcal{F}_n \rightarrow L_n$ defined by $\varphi(x_i) = \displaystyle \prod_{\alpha \in T}\varphi_{\alpha}(x_i)$, where 
	$\varphi_{\alpha}(x_i) = \begin{cases}
		x_i, & i \in \alpha; \\
		1, & i \notin \alpha .
	\end{cases}$
	
	Note that $L_n$ is a code loop of the variety $\mathcal{E}$. While it is not a free loop, it contains a free structure within itself.

	Since $\mathcal{F}_n$ is free, any choice of image for $x_i$ can be extended to a homomorphism.
	
	
	We need to prove that $\ker(\varphi) = 1$. 
	First, observe that $\ker(\varphi) \subset \mathcal{Z}(\mathcal{F}_n)$. 
	Indeed, if $x \in \ker(\varphi)$ and $y \in \mathcal{F}_n$, then $xy = yx$, 
	because $\varphi_{\alpha}(x) = 1$ for all $\alpha = (i,j,k) \in T$, 
	and thus 
	\[
	\varphi(xy) = (1,1,\dots,1)\varphi(y) = \varphi(y)(1,1,\dots,1) = \varphi(yx).
	\]
	
	Let $x \in ker(\varphi)$. According to Lemma \ref{lemma.center}, there exist coefficients $\xi_{i}$, $\xi_{ij}$, $\xi_{ijk}\in\{0,1\}$ such that:

	\begin{equation}
		x = \displaystyle \prod_{1 \leq i \leq n}(x_{i}^{2})^{\xi_{i}}. \displaystyle \prod_{1 \leq i<j \leq n}[x_{i},x_{j}]^{\xi_{ij}}. \displaystyle \prod_{1\leq i<j<k \leq n}(x_{i},x_{j},x_{k})^{\xi_{ijk}}. \label{eq16}
	\end{equation}
	
	When applying $\varphi$ to $x \in ker(\varphi) $, each coordinate $\varphi_{\alpha}(x)$ takes the form:

	
	\begin{equation}
		\varphi_{\alpha}(x) 
		=  (x_{i}^{2})^{\xi_{i}} (x_{j}^{2})^{\xi_{j}} (x_{k}^{2})^{\xi_{k}}
		[x_i, x_j]^{\xi_{ij}} [x_i, x_k]^{\xi_{ik}} [x_j, x_k]^{\xi_{jk}}
		(x_{i},x_{j},x_{k})^{\xi_{ijk}}.
	\end{equation}
	
	If there exists a triple $(i,j,k)$ such that $\xi_{ijk} = 1$, choose $\beta = (i,j,k)$ and consider the projection map $p_{\beta}: L_n \rightarrow M_{3}^{\beta}$. There exists a unique homomorphism $g_{\beta}:\mathcal{F}_n \rightarrow M_{3}^{\beta}$ such that $g_{\beta} = p_{\beta}\circ \varphi$. This gives us the commutative diagram:
	
	\[
	\begin{array}{ccc}
		\mathcal{F}_n  & {\overset{\varphi}{\longrightarrow}} & L_n  \\
		& \underset{\hspace{-0.1in} g_{\beta}}{\searrow} & \downarrow{p_{\beta}} \\
		&  & M_3 ^{\beta} \\
	\end{array}
	\]
	Since $\varphi(x) = 1$, it follows that $g_{\beta}(x) = 1$.

	To show that $x=1$, we will consider two cases:
	
	\textbf{Case 1:} Suppose   $$x = (x_{i}^{2})^{\xi_{i}} (x_{j}^{2})^{\xi_{j}} (x_{k}^{2})^{\xi_{k}}  [x_i, x_j]^{\xi_{ij}} [x_i, x_k]^{\xi_{ik}} [x_j, x_k]^{\xi_{jk}} (x_{i},x_{j},x_{k}).$$

	Then, calculating $g_{\beta}(x)$, we get   $$1 = g_{\beta}(x) = p_{\beta}(\varphi(x)) = (x_i ^2)^{\beta{i}} (x_j ^2)^{\beta{j}} (x_k ^2)^{\beta{k}} [x_i, x_j]^{\beta{ij}}[x_i,x_k]^{\beta{ik}}[x_j,x_k]^{\beta{jk}}(x_{i},x_{j},x_{k}) = x,$$ since  the corresponding exponents are equal.
	
	In this case, the result follows directly.
	
	\textbf{Case 2:} If there are other terms in the expression of $x$ involving indices of more than three generators, suppose there is another triple $\gamma = (i^{'} , j^{'} , k^{'})$ such that $\xi_{i^{'}j^{'}k^{'}} = 1$. Assume, without loss of generality, that: $$x = (x_{i}^{2})^{\xi_{i}} (x_{j}^{2})^{\xi_{j}} (x_{k}^{2})^{\xi_{k}}  [x_i, x_j]^{\xi_{ij}} [x_i, x_k]^{\xi_{ik}} [x_j, x_k]^{\xi_{jk}} (x_{i},x_{j},x_{k})(x_{i^{'}},x_{j^{'}},x_{k^{'}}).$$
	
	In order to conclude that  $x=1$ ,  we  calculate $g_{\beta}(x)$ and $g_{\gamma}((x_{i^{'}},x_{j^{'}},x_{k^{'}}))$. Applying $g_{\beta}$ to $x$ we get: $$(x_i ^2)^{\beta{i}} (x_j ^2)^{\beta{j}} (x_k ^2)^{\beta{k}} [x_i, x_j]^{\beta{ij}}[x_i,x_k]^{\beta{ik}}[x_j,x_k]^{\beta{jk}}(x_{i},x_{j},x_{k}) = 1.$$
	
	Thus, $x$ can be reduced to $x = (x_{i^{'}},x_{j^{'}},x_{k^{'}}).$ At last, we apply $g_{\gamma} = p_{\gamma}\circ \varphi$ to $x = (x_{i^{'}},x_{j^{'}},x_{k^{'}})$ and determine $g_{\gamma}(x) = x = 1$. 
	
	Note that if there are additional terms involving  other generators, it will be necessary to construct homomorphisms $g_{\beta}$ until $x$ is fully reduced to the identity element. 

	If no triple $\beta= (i,j,k)$ exists where $\xi_{ijk} = 1$, we can focus on elements where $\xi_{ij} = 1$ or $\xi_i = 1$ and the analysis will proceed in a similar manner.
\end{proof}

This lemma shows that the free loop $\mathcal{F}_n$ can be described in terms of copies of $M_3$. 
To continue the analogy with the group case, we now determine explicitly the Moufang loop $M_3$ corresponding to $G_3$.

\subsection{Determination of the Moufang Loop \( M_3 \) Corresponding to \( G_3 \)}Let us first review some results provided in \cite{Doro} and \cite{GrishkovZavarnitsine06}.
Let $G$ be a \textit{group with triality} $S = <\sigma, \rho>$. Doro \cite{Doro} proved that  $G$ can be decomposed as follows: 
\( G = H M^{\rho^2} \), where 
\( H = \{ x \in G \mid x^\sigma = x \} \) and \( M = \{ x^{-1} x^\sigma \mid x \in G \} \); 
moreover, \( (M^{\rho^2}, \star) \) is a Moufang loop with multiplication 
\begin{equation}\label{eq:loopdoro}
	x \star y = z  \iff xy = hz,  \,\,\, \mbox{for}\,\, h \in H,\, x,y,z \in M^{\rho^2}.
\end{equation}

The following theorem, due to Grishkov and Zavarnitsine, provides an explicit description of the Moufang loop structure associated with a group with triality. 

\begin{theorem}[\cite{GrishkovZavarnitsine06}, Theorem 1, p.445] \label{the:GZ} Let $G$ be a group with triality and $M = \{x^{-1}x^{\sigma}\,|\,x \in G\}$. Then the set $M$ is a Moufang loop with respect to the multiplication 
	\begin{equation}
		m\cdot n = m^{-\rho}nm^{-\rho^{2}}, \, \, \forall \, \,  m,n \in M.
	\end{equation}
	Moreover, this Moufang loop $(M,\cdot)$ is isomorphic to Doro's loop $(M^{\rho^2}, \star)$ with multiplication given by $(\ref{eq:loopdoro})$.
	
\end{theorem}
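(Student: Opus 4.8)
The plan is to deduce the theorem from Doro's result, which already gives that $(M^{\rho^2},\star)$ is a Moufang loop, rather than verifying a Moufang identity on $(M,\cdot)$ from scratch. The bridge is the map $\phi\colon M\to M^{\rho^2}$, $\phi(m)=m^{\rho^2}$, which is a bijection since $\rho^2$ is an automorphism of $G$ restricting to a bijection $M\to M^{\rho^2}$. If I can show that $\phi$ transports $\cdot$ to $\star$, that is, $(m\cdot n)^{\rho^2}=m^{\rho^2}\star n^{\rho^2}$ for all $m,n\in M$, then $(M,\cdot)$ inherits both the loop structure and the Moufang property from Doro's loop, and $\phi$ is the required isomorphism; both assertions of the theorem follow at once.

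Before this, I would record the two elementary facts that drive every computation. First, every $m\in M$ satisfies $m^\sigma=m^{-1}$: if $m=x^{-1}x^\sigma$ then $m^\sigma=(x^\sigma)^{-1}x^{\sigma^2}=(x^\sigma)^{-1}x=m^{-1}$. Second, the triality identity applied to an $x$ with $m=[x,\sigma]$ reads $m\,m^\rho\,m^{\rho^2}=1$ on $M$, equivalently $m^{\rho^2}=m^{-\rho}m^{-1}$. I would also fix the commutation convention coming from $S\cong S_3$: from $(\sigma\rho)^2=1$ we get $\sigma\rho\sigma=\rho^{-1}$, hence $(g^\rho)^\sigma=(g^\sigma)^{\rho^2}$ and $(g^{\rho^2})^\sigma=(g^\sigma)^{\rho}$ for all $g\in G$. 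These identities, together with Doro's decomposition $G=H\,M^{\rho^2}$ with unique factorization (which makes $\star$ well defined and gives identity element $1$), are the only inputs.

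The core step is the verification $(m\cdot n)^{\rho^2}=m^{\rho^2}\star n^{\rho^2}$. Unwinding Doro's definition, this amounts to two claims: \emph{closure}, that $m\cdot n=m^{-\rho}n\,m^{-\rho^2}\in M$ so that its $\rho^2$-image lies in $M^{\rho^2}$; and that the \emph{correction factor} $h:=m^{\rho^2}n^{\rho^2}\big((m\cdot n)^{\rho^2}\big)^{-1}$ lies in $H$, i.e. $h^\sigma=h$. For the first, one checks $(m\cdot n)^\sigma=(m\cdot n)^{-1}$ directly from $m^\sigma=m^{-1}$, $n^\sigma=n^{-1}$ and the convention above (this places $m\cdot n$ in the $\sigma$-inverted set); to land inside $M$ proper I would exhibit an explicit preimage $c\in G$ with $c^{-1}c^\sigma=m\cdot n$, using the identification $M\cong H\backslash G$. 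For the second, one substitutes $m^{\rho^2}=m^{-\rho}m^{-1}$, $n^{\rho^2}=n^{-\rho}n^{-1}$ and the analogous expression for $(m\cdot n)^{\rho^2}$, applies $\sigma$, and reduces $h^\sigma$ to $h$ using the triality relation repeatedly.

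The main obstacle is precisely this last computation: it is not formal, because it genuinely requires the triality relation $m\,m^\rho\,m^{\rho^2}=1$ (the weaker fact $m^\sigma=m^{-1}$ alone is insufficient), and it is sensitive to the $\sigma$--$\rho$ ordering conventions, so a single misplaced exponent makes the identity fail. A purely internal alternative, checking the loop axioms and, say, the middle Moufang identity directly on $(M,\cdot)$, is possible but strictly longer, since it reproves by hand what Doro's theorem already supplies, and it would still hinge on the same triality relation. For this reason I would carry out the Doro-transport route, isolating the $\sigma$-invariance of $h$ as a single lemma and proving it by a careful term-by-term reduction.
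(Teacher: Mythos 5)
First, note that the paper itself offers no proof of this statement: it is quoted verbatim from Grishkov--Zavarnitsine (Theorem~1 of \cite{GrishkovZavarnitsine06}), so there is no internal argument to compare yours against. Your overall strategy --- transport Doro's loop structure through the bijection $m\mapsto m^{\rho^2}$ from $M$ to $M^{\rho^2}$, reducing the theorem to (i) closure of $M$ under $m\cdot n=m^{-\rho}nm^{-\rho^2}$ and (ii) the $\sigma$-invariance of the correction factor $h=m^{\rho^2}n^{\rho^2}\big((m\cdot n)^{\rho^2}\big)^{-1}$ --- is the natural one, and your preliminary identities ($m^\sigma=m^{-1}$, $mm^{\rho}m^{\rho^2}=1$ on $M$, $\rho\sigma=\sigma\rho^2$) are all correct.

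The gap is that the two decisive steps are not carried out, and the method you indicate for the central one does not suffice. Writing things out, $(m\cdot n)^{\rho^2}=m^{-1}n^{\rho^2}m^{-\rho}$, so $h=m^{\rho^2}n^{\rho^2}m^{\rho}n^{-\rho^2}m$ and $h^{\sigma}=m^{-\rho}n^{-\rho}m^{-\rho^2}n^{\rho}m^{-1}$. Substituting $m^{\rho^2}=m^{-\rho}m^{-1}$ and $n^{\rho^2}=n^{-\rho}n^{-1}$ (the only relations you propose to use) and cancelling the common prefix $m^{-\rho}$, the identity $h=h^{\sigma}$ becomes $m^{-1}n^{-\rho}n^{-1}m^{\rho}nn^{\rho}m=n^{-\rho}mm^{\rho}n^{\rho}m^{-1}$; these are distinct reduced words (of lengths $7$ and $5$) in the free group on $m,m^{\rho},n,n^{\rho}$, so the equality is \emph{not} a consequence of the triality relations for $m$ and $n$ alone. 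What is genuinely needed is the triality identity applied to further elements of $G$: for instance, with $m=[x,\sigma]$ and $n=[y,\sigma]$ one has $[xy,\sigma]=m^{y}n\in M$, and the relations $(m^{y}n)(m^{y}n)^{\rho}(m^{y}n)^{\rho^2}=1$ (and their relatives, equivalently the fact that $\{w\sigma : w\in M\}$ is a class of involutions with prescribed products) are what force both $m\cdot n\in M$ and $h^{\sigma}=h$. Your proposal never engages with these mixed relations, and the closure step is likewise left as ``exhibit an explicit preimage'' without one being produced; as written, the argument does not close.
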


\begin{definition}
	For a group with triality $G$, we denote by $L(G)$ the Moufang loop associated to $G$,
	that is,
	\[
	L(G) := \{ x^{-1}x^\sigma \mid x \in G \},
	\]
	with multiplication
	\[
	m\cdot n = m^{-\rho} n m^{-\rho^2}, \qquad m,n\in L(G).
	\]
\end{definition}

\begin{remark}\label{remark.triality} Now, let \( G_3 \) be a nilpotent group of class 3 with 6 generators $ \{a_1, a_2, a_3, b_1,$ $ b_2, b_3\}$, where each generator satisfies \( a_i^4 = b_i^4 = 1 \) and the relations \ref{rel.1} through \ref{rel.8} given in Section \ref{section:group.with.triality}. 
	We suppose $u_{ij} = [a_i, a_j],$ $v_{ij} = [b_i, b_j]$, $p_{ij} = [a_i, b_j]$ for $i < j$ and define $z_{123}$, $t_{123}$ accordingly by these relations. Additionally, the automorphisms $\sigma$ and $\rho$ are defined by Equations \ref{eq.sigma} and \ref{eq.rho}. We have previously proven that $G_3$ is a group with triality with respect to $S= <\sigma, \rho>$. 
	
	Furthermore, the squares of the generators, as well as the commutators and associators, are central of order \(2\), 
	which contributes to the order \( |G_3| = 2^{23} \).
\end{remark}


\begin{theorem}
	Let \( G_3 \) be a group with triality with respect to $S= <\sigma, \rho>$, as defined in Remark $\ref{remark.triality}$.
	Let \( M_3 \) defined as:
	\[
	M_3 = \{x^{-1} x^\sigma \mid x \in G_3\},
	\]
	with respect to the  multiplication law:
	\[
	m \cdot n = m^{-\rho} n m^{-\rho^2}, \quad \text{for all } m, n \in M_3.
	\]
	The set \( M_3 \) is the corresponding Moufang loop to $G_3$ and has the following properties:
	\begin{enumerate}
		\item The loop \( M_3 \) is generated by:
		\[
		\{x_1, x_2, x_3, [x_1, x_2], [x_1, x_3], [x_2, x_3], (x_1, x_2, x_3), x_1^2, x_2^2, x_3^2\},
		\]
		where \( x_i = a_i^{-1} a_i^\sigma \) for \( i = 1, 2, 3 \).
		\item The order of \( M_3 \) is \( |M_3| = 2^{10} \).
		
		\item \( M_3 \) is a free loop in the variety \( \mathcal{E} \) generated by code loops.
	\end{enumerate}
	
\end{theorem}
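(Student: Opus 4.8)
The plan is to prove the three assertions in the order (2), (1), (3), since the cardinality computation feeds directly into the generation and freeness arguments. By Theorem~\ref{the:GZ} we already know that $M_3$ is a Moufang loop, so only its order, a generating set, and its freeness remain to be established.

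First I would compute $|M_3|$. Since $\sigma$ is an automorphism, the map $f\colon G_3\to G_3$, $f(x)=x^{-1}x^{\sigma}$, satisfies $f(x)=f(x')$ if and only if $x'x^{-1}=(x'x^{-1})^{\sigma}$, that is, if and only if $x'\in Hx$, where $H=\{g\in G_3 : g^{\sigma}=g\}$ is the fixed subgroup of $\sigma$. Hence the fibres of $f$ are exactly the left cosets of $H$, and $|M_3|=[G_3:H]$. Using the coordinate description of $\sigma$ in Remark~\ref{rem.sigma}, the condition $\overline{\alpha}^{\sigma}=\overline{\alpha}$ forces $\alpha_{i+3}=\alpha_i$ for $i=1,2,3$ and $\alpha_{9+i}=\alpha_{6+i}$ for $i=1,2,3$, together with the single relation $\alpha_{16}+\alpha_{17}=\alpha_1\alpha_2\alpha_3$, while $\alpha_{13},\alpha_{14},\alpha_{15}$ remain free; counting gives $|H|=4^{3}\cdot 2^{3}\cdot 2^{3}\cdot 2=2^{13}$. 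Since $|G_3|=2^{23}$, this yields $|M_3|=2^{23}/2^{13}=2^{10}$, proving (2).

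For (1), I would work in coordinates, where $x_i=a_i^{-1}b_i$ corresponds to the vector with $-1$ in position $i$ and $+1$ in position $i+3$. The elements of $M_3$ have the shape read off from $\overline{a}=\overline{x^{-1}x^{\sigma}}$ computed earlier: the first three coordinates are the negatives of the next three, and the $p_{ij}$-coordinates vanish. Using the loop product $m\cdot n=m^{-\rho}nm^{-\rho^{2}}$ together with the formulas for $\rho$, $\rho^{2}$ and inversion in $\mathbb{P}_3$, I would compute the loop squares $x_i^{2}$, the loop commutators $[x_i,x_j]$, and the loop associator $(x_1,x_2,x_3)$, identifying each with the central coordinates $u_{ij},v_{ij},z_{123},t_{123}$. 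Checking that these seven central elements are $\mathbb{F}_2$-independent shows that $\langle x_1,x_2,x_3\rangle$ already has order $2^{3}\cdot 2^{7}=2^{10}=|M_3|$, whence the ten listed elements generate $M_3$. Finally, for (3), the structure found in (1) — the three generators having order $4$ while all squares, commutators and associators are central of order $2$ — shows directly that $M_3$ satisfies every defining identity of $\mathcal{E}$, so $M_3\in\mathcal{E}$. As $M_3=\langle x_1,x_2,x_3\rangle$ is $3$-generated and $\mathcal{F}_3$ is free of rank $3$ in $\mathcal{E}$, there is a surjective homomorphism $\psi\colon \mathcal{F}_3\twoheadrightarrow M_3$ sending the free generators to the $x_i$. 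By Lemma~\ref{lemma.center}, $Z(\mathcal{F}_3)$ is spanned over $\mathbb{F}_2$ by the three squares, three commutators and one associator, so $|Z(\mathcal{F}_3)|=2^{7}$ and $\mathcal{F}_3/Z(\mathcal{F}_3)\cong(\mathbb{Z}/2)^{3}$, giving $|\mathcal{F}_3|=2^{10}=|M_3|$. A surjection between finite loops of equal order is a bijection, so $\psi$ is an isomorphism and $M_3\cong\mathcal{F}_3$ is free in $\mathcal{E}$.

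The main obstacle will be the coordinate computation in (1) of the loop square, commutator and associator through $m\cdot n=m^{-\rho}nm^{-\rho^{2}}$, which requires composing the $\rho$- and $\rho^{2}$-actions, inversion, and three successive products in $\mathbb{P}_3$. This is long but entirely routine, in the same spirit as the verification of the triality identity; once it is in place, the $\mathbb{F}_2$-independence check and the order comparison close the argument immediately.
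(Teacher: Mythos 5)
Your proposal is correct and follows essentially the same route as the paper: both compute $|H|=2^{13}$ for the $\sigma$-fixed subgroup (you by solving the fixed-point equations in coordinates, the paper by exhibiting generators of $H$), deduce $|M_3|=2^{23}/2^{13}=2^{10}$, identify the ten generators, and check the identities of $\mathcal{E}$. Your closing step for freeness --- the surjection $\mathcal{F}_3\twoheadrightarrow M_3$ combined with the upper bound $|\mathcal{F}_3|\le 2^{3}\cdot 2^{7}=2^{10}$ coming from Lemma~\ref{lemma.center} --- is in fact more explicit than the paper's, which simply asserts that $M_3$ is freely generated in $\mathcal{E}$.
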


\begin{proof}
	Let \( M_3 = \{ x^{-1} x^\sigma \mid x \in G_3\} \), where \( G_3 \) is a group with triality with generators \( \{a_1, a_2, a_3, b_1, b_2, b_3\} \) satisfying \( a_i^4 = b_i^4 = 1 \), and automorphisms \( \sigma \) and \( \rho \) such that \( \sigma^2 = \rho^3 = (\sigma \rho)^2 = 1 \).
	
	Since the multiplication in \( M_3 \) is defined by:
	\[
	m \cdot n = m^{-\rho} n m^{-\rho^2}, \quad \text{for all } m, n \in M_3,
	\]
	
	\noindent and the  elements of \( M_3 \) are given by \( x^\sigma x^{-1} \), for all $x \in G_3$  they can be  explicitly described as:
	\[
	x_i = a_i^{-1} a_i^\sigma \quad \text{for } i = 1, 2, 3.
	\]

	Therefore, we can conclude that the loop \( M_3 \) is generated by the elements:
	\[
	\{x_1, x_2, x_3, [x_1, x_2], [x_1, x_3], [x_2, x_3], (x_1, x_2, x_3), x_1^2, x_2^2, x_3^2\}.
	\]

	In order to determine the order of \( M_3 \), consider the subgroup:
	\[
	H = \{h \in G_3 \mid h^\sigma = h\}.
	\]
	
	With direct calculation, we show that this subgroup has order \( |H| = 2^{13} \), as it is generated by: 
	\[
	\{a_i b_i, (a_i b_i)^2, p_{ij}, u_{ij} v_{ij}, t z\}, \quad \text{where } t = t_{123} \text{ and } z = z_{123}.
	\]

	
	Since \( |G_3| = 2^{23} \), it follows that:
	\[
	|M_3| = \frac{|G_3|}{|H|} = \frac{2^{23}}{2^{13}} = 2^{10}.
	\]
	Indeed, by Doro's result, we can decompose $G_3 = H.M^{\rho^2}$, where $M=M_3$, and $(M^{\rho^2}, \star)$ is a Moufang loop  with the  multiplication law given by \ref{eq:loopdoro}.  Moreover, as established by Grishkov and Zavarnitsine in Theorem \ref{the:GZ}, the loops $(M^{\rho^2}, \star)$ and $(M_3, \cdot)$ are isomorphic.

	The structure of \( M_3 \) satisfies the defining identities of the variety \( \mathcal{E} \), namely: \( x^4 = 1 \), \( [x, y]^2 = 1 \), and \( (x, y, z)^2 = 1 \), as well as the centrality conditions. Therefore, \( M_3 \) is a Moufang loop of order \( 2^{10} \), freely generated in the variety \( \mathcal{E} \).
\end{proof}

We conclude that the loop $M_3$ obtained from $G_3$ has the expected structure of a free code loop of rank three. 
This result allows us to obtain the general correspondence between $G_n$ and $\mathcal{F}_n$. 
Indeed, by Theorem \ref{theorem:gn.triality}, $G_n$ can be embedded in a direct product of copies of $G_3$. 
By Lemma~\ref{lemma:loop.embedded}, the free loop $\mathcal{F}_n$ in the variety $\mathcal{E}$ is also embedded in a direct product of copies of $M_3$, 
which are the Moufang loops associated to the corresponding copies of $G_3$. 

\begin{remark}\label{rem:embedding.induced}
	The embedding $G_n \hookrightarrow \displaystyle\prod_\alpha G_3^{(\alpha)}$ naturally induces 
	an embedding $L(G_n)\hookrightarrow \displaystyle\prod_\alpha M_3^{(\alpha)}$, 
	where the automorphisms $\sigma$ and $\rho$ act componentwise on the direct product.
\end{remark}

As a consequence, we obtain the following result:

\begin{corollary}\label{cor:main.result}
	Let \( G_n \) be the nilpotent group of class \( 3 \) with triality and with the additional relations given in Remark $\ref{remark:additional.relations}$, and let \( \mathcal{F}_n \) be the free loop with \( n \) generators in the variety \( \mathcal{E} \) generated by code loops. Then \( \mathcal{F}_n \) is the Moufang loop corresponding to \( G_n \).
\end{corollary}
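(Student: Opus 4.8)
The plan is to compare two embeddings into the same direct product and show that their images coincide. Write $P = \prod_{\alpha \in T} M_3^{(\alpha)}$ for the direct product of the free code loops of rank $3$. On the one hand, Lemma~\ref{lemma:loop.embedded} gives an embedding $\varphi \colon \mathcal{F}_n \hookrightarrow P$ with $\varphi(x_i)$ equal to the element whose $\alpha$-component is $x_i$ when $i \in \alpha$ and $1$ otherwise. On the other hand, Theorem~\ref{theorem:gn.triality} together with Remark~\ref{rem:embedding.induced} gives an embedding $j \colon L(G_n) \hookrightarrow P$ induced by $G_n \hookrightarrow \prod_\alpha G_3^{(\alpha)}$, with the triality $S=\langle \sigma,\rho\rangle$ acting componentwise. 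I would show that $\varphi(\mathcal{F}_n) = j(L(G_n))$ as subloops of $P$, from which $\mathcal{F}_n \cong L(G_n)$ follows at once.

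First I would record that $L(G_n) \in \mathcal{E}$. Each factor $M_3^{(\alpha)} \cong M_3$ lies in $\mathcal{E}$ by the preceding theorem, and since $\mathcal{E}$ is a variety it is closed under direct products and subloops; hence $P \in \mathcal{E}$, and its subloop $L(G_n)$ lies in $\mathcal{E}$ as well. Next, exactly as for $M_3$, the loop $L(G_n) = \{ g^{-1} g^\sigma : g \in G_n \}$ is generated by the elements $x_i = a_i^{-1} a_i^\sigma$, $i = 1, \dots, n$, since all commutators, associators, and squares appearing in a generating set are loop-theoretic consequences of the $x_i$. Because $\mathcal{F}_n$ is free in $\mathcal{E}$ on $\{x_1, \dots, x_n\}$, the assignment $x_i \mapsto x_i$ extends uniquely to a loop homomorphism $\psi \colon \mathcal{F}_n \to L(G_n)$, which is surjective since the $x_i$ generate $L(G_n)$.

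The key step is to compute $j \circ \psi$ on generators. Since $\pi_\alpha(a_i) = a_i$ for $i \in \alpha$ and $\pi_\alpha(a_i) = 1$ for $i \notin \alpha$, the $\alpha$-component of $j(\psi(x_i)) = j(x_i)$ is the generator $x_i$ of $M_3^{(\alpha)}$ when $i \in \alpha$ and is trivial otherwise. This is precisely $\varphi(x_i)$. Thus $j \circ \psi$ and $\varphi$ are loop homomorphisms $\mathcal{F}_n \to P$ that agree on the free generators, so $j \circ \psi = \varphi$. As $\varphi$ is injective by Lemma~\ref{lemma:loop.embedded}, $\psi$ is injective; being also surjective, $\psi$ is an isomorphism, giving $\mathcal{F}_n \cong L(G_n)$.

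The main obstacle is the verification underlying Remark~\ref{rem:embedding.induced}: one must be sure that the loop operation $m \cdot n = m^{-\rho} n m^{-\rho^2}$ transports through the group embedding $G_n \hookrightarrow \prod_\alpha G_3^{(\alpha)}$ to the componentwise loop operations on $P$, i.e. that $j$ really is a homomorphism of loops compatible with the componentwise $S$-action, and that $L(G_n)$ is generated by the $x_i$ alone rather than requiring auxiliary central elements that might fail to transport cleanly. Both points are structural consequences of the triality construction, but they must be checked with care; as a sanity check one can also compare orders through $|L(G_n)| = |G_n|/|H_n|$, where $H_n = \{h \in G_n : h^\sigma = h\}$, against the order of $\mathcal{F}_n$ read off from the generating set of Lemma~\ref{lemma.center}.
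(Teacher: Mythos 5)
Your proposal is correct and follows essentially the same route as the paper's proof: both compare the embedding $\varphi\colon\mathcal{F}_n\hookrightarrow\prod_{\alpha}M_3^{(\alpha)}$ from Lemma~\ref{lemma:loop.embedded} with the embedding of $L(G_n)$ induced by Theorem~\ref{theorem:gn.triality} and Remark~\ref{rem:embedding.induced}, factor $\varphi$ through the map $\mathcal{F}_n\to L(G_n)$ given by the universal property, and deduce injectivity from that of $\varphi$ and surjectivity from the fact that the $x_i$ generate $L(G_n)$. Your version is slightly more explicit in verifying that $L(G_n)\in\mathcal{E}$ and that the two maps agree on free generators (the paper simply asserts $\eta\circ\pi=\varphi$), but the argument is the same.
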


\begin{proof}
	By the universal property of $\mathcal{F}_n$, there exists a unique homomorphism 
	\[
	\pi:\mathcal{F}_n \to L(G_n)
	\]
	sending each free generator to its image in $L(G_n)$. 
	
	By Lemma~\ref{lemma:loop.embedded}, 
	$\mathcal{F}_n$ embeds in a direct product of copies of $M_3$. So, let $\varphi:\mathcal{F}_n \hookrightarrow \prod_{\alpha} M_3^{(\alpha)}$ be this embedding. Moreover, let
	$\eta:L(G_n)\hookrightarrow \prod_{\alpha} M_3^{(\alpha)}$ be the embedding described as in Remark \ref{rem:embedding.induced}, 
	chosen so that $\eta\circ\pi=\varphi$.  
	Since both $\varphi$ and $\eta$ are injective, $\pi$ is injective. 
	Moreover, since the images of the generators of $\mathcal{F}_n$ generate $L(G_n)$, $\pi$ is surjective.  
	Therefore, $\pi$ is an isomorphism, and $\mathcal{F}_n \cong L(G_n)$.
	
	\[
	\begin{minipage}{0.45\textwidth}
		\centering
		\begin{tikzcd}[column sep=large, row sep=large]
			G_n \arrow[r, hook, "\iota"] \arrow[d] 
			& \displaystyle\prod_{\alpha} G_3^{(\alpha)} \arrow[d] \\
			L(G_n) \arrow[r, hook, "\eta"] 
			& \displaystyle\prod_{\alpha} M_3^{(\alpha)}
		\end{tikzcd}
	\end{minipage}
	\hfill
	\begin{minipage}{0.45\textwidth}
		\centering
		\begin{tikzcd}[column sep=large, row sep=large]
			\mathcal{F}_n \arrow[dr, hook, "\varphi"'] \arrow[r, "\pi"] 
			& L(G_n) \arrow[d, hook, "\eta"] \\
			& \displaystyle\prod_{\alpha} M_3^{(\alpha)}
		\end{tikzcd}
		\\[0.5em]
		\text{with } $\eta \circ \pi = \varphi$
	\end{minipage}
	\]

\end{proof}

	This result completes the construction we aimed to establish: a nilpotent group \( G_n \) of class \( 3 \) with triality, whose corresponding Moufang loop is precisely the free loop \( \mathcal{F}_n \) in the variety \( \mathcal{E} \) generated by code loops.
	
	In the next section, we compare this construction with the one given by Nagy \cite{Nagy}, showing how our result generalizes his.


	
	\section{Comparison with Nagy’s construction}\label{section:comparison}
	
	In \cite{Nagy}, the author constructs a specific group with triality, denoted here by $G'_n$, starting from a given code, 
	whose associated loop is the code loop determined by that code. 
	In this sense, his approach is concrete: each choice of parameters produces one code loop. 
	We omit here the details of his construction. 
	
	Our approach is different in that we build a universal group with triality $G_n$, 
	whose associated loop $\mathcal{F}_n = L(G_n)$ is the free object in the variety $\mathcal{E}$ generated by code loops. 
	From this free loop, every code loop of rank $n$ can be obtained as a quotient 
	$\mathcal{F}_n \twoheadrightarrow L_n$ determined by a characteristic vector $\lambda$. 
	
	The relation between the two constructions can be summarized in the following diagram:
	
	\[
	\begin{tikzcd}[row sep=large, column sep=huge]
		G_n 
		\arrow[r, "\phi"] 
		\arrow[d, "\psi"'] 
		& G'_n 
		\arrow[r, "\theta"] 
		\arrow[d, "g_{\lambda}"'] 
		& L'_n 
		\arrow[dl, "f_{\lambda}", "\cong"'] 
		\\
		\mathcal{F}_n 
		\arrow[r, two heads, "\pi_{\lambda}"'] 
		& L_n &
	\end{tikzcd}
	\]
	
	Here $\psi: G_n \to \mathcal{F}_n$ is the natural map obtained from Corollary~\ref{cor:main.result}, 
	and $\pi_{\lambda}: \mathcal{F}_n \twoheadrightarrow L_n$ is the canonical quotient determined by the characteristic vector $\lambda$.

	On the other side, according to the construction given by Nagy~\cite{Nagy}, 
	the homomorphism $\theta: G'_n \to L'_n$ produces a code loop $L'_n$, 
	and the isomorphism $f_{\lambda}: L'_n \to L_n$ identifies this loop with the one obtained from $\mathcal{F}_n$.

	Thus $g_{\lambda} := f_{\lambda}\circ\theta$ is a natural map $G'_n \to L_n$. 
	Finally, by the universality of $G_n$, there exists a homomorphism $\phi: G_n \to G'_n$, 
	chosen so that the diagram commutes:
	\[
	g_{\lambda}\circ \phi = \pi_{\lambda}\circ \psi.
	\]

	This shows that both constructions lead to the same code loop $L_n$.  
	From our side, $G_n$ gives the free loop $\mathcal{F}_n$, and every code loop $L_n$ is a quotient of $\mathcal{F}_n$.  
	From Nagy’s side, $G'_n$ produces directly a code loop $L'_n$, which is isomorphic to $L_n$.  
	Moreover, this establishes that our construction actually generalizes Nagy’s: 
	while $G'_n$ corresponds to one particular code loop, 
	$G_n$ is the free group with triality from which all such examples arise.  
	In other words, $G'_n$ appears as an image of $G_n$, 
	and Nagy’s code loops appear as quotients of $\mathcal{F}_n$.

	\subsection*{Acknowledgments}
	The authors A. Grishkov, R. M. Pires, and M. Rasskazova thank the National Council for Scientific and Technological Development CNPq (grant 406932/\\2023-9). A. Grishkov was supported by FAPESP (grant 2024/14914-9), CNPq (grant 307593/2023-1), and in accordance with the state task of the IM SB RAS, project FWNF-2022-003. 
	
	
	

	
	
	
	
	
	
	


	\newpage
	\section{Appendix}\label{appendice:tables}
	
	\begin{table}[h]
		\begin{tabular}{|l|l|l|l|}
			\hline
			& $\overline{\alpha}$           & $\overline{\beta}$           & $\overline{\alpha}\cdot \overline{\beta}$                                                                                                                                    \\ \hline
			$1$                                  & $\alpha_1$               & $\beta_1$               & $\alpha_1 + \beta_1$                                                                                                        \\ \hline
			$\vdots$                             & $\vdots$                 & $\vdots$                & $\vdots$                                                                                                                    \\ \hline
			$n$                                  & $\alpha_n$               & $\beta_n$               & $\alpha_n + \beta_n$                                                                                                        \\ \hline
			$n+1$                                & $\alpha_{n+1}$           & $\beta_{n+1}$           & $\alpha_{n+1}+ \beta_{n+1}$                                                                                                                                                                                                                       \\ \hline
			$\vdots$                             & $\vdots$                 & $\vdots$                & $\vdots$                                                                                                                    \\ \hline
			$2n$                                 & $\alpha_{2n}$            & $\beta_{2n}$            & $\alpha_{2n}+\beta_{2n}$                                         \\ \hline
			$2n+1$                               & $\alpha_{12}^1$          & $\beta_{12}^1$          & $\alpha_{12}^1+\beta_{12}^1 + \alpha_{2}\beta_{1}$                                                                          \\ \hline
			$\vdots$                             & $\vdots$                 & $\vdots$                & $\vdots$                                                                                                                                                                               \\ \hline
			$2n + {n \choose 2}$                 & $\alpha_{n-1\,n}^1$      & $\beta_{n-1\,n}^1$      & $\alpha_{n-1\,n}^1 +\beta_{n-1\,n}^1 + \alpha_{n}\beta_{n-1}$                                                                                                                          \\ \hline
			$2n + {n \choose 2} +1$              & $\alpha_{12}^2$          & $\beta_{12}^2$          & $\alpha_{12}^2 + \beta_{12}^2 + \alpha_{2+n}\beta_{1+n}$                                                                    \\ \hline
			$\vdots$                             & $\vdots$                 & $\vdots$                & $\vdots$                                                                                                                    \\ \hline
			$2n + 2{n \choose 2}$                 & $\alpha_{n-1\,n}^2$      & $\beta_{n-1\,n}^2$      & $\alpha_{n-1\,n}^2 + \beta_{n-1\,n}^2 + \alpha_{2n}\beta_{2n-1}$                                                                                                                       \\ \hline
			$2n + 2{n \choose 2}+1$              & $\alpha_{12}^3$          & $\beta_{12}^3$          & $\alpha_{12}^3+\beta_{12}^3+\alpha_{1+n}\beta_{2}+\alpha_{2+n}\beta_{1}$                                                                                                               \\ \hline
			$\vdots$                             & $\vdots$                 & $\vdots$                & $\vdots$                                                                                                                    \\ \hline
			$2n + 3{n \choose 2}$                & $\alpha_{n-1\,n}^3$      & $\beta_{n-1\,n}^3$      & $\alpha_{n-1\,n}^3+\beta_{n-1\,n}^3+\alpha_{2n-1}\beta_{n}+\alpha_{2n}\beta_{n-1}$                                          \\ \hline
			$2n + 3{n \choose 2}+1$              & $\alpha_{123}^1$         & $\beta_{123}^1$         & \begin{tabular}[c]{@{}l@{}}$\alpha_{123}^1+\beta_{123}^1 +  \alpha_{12}^{3}\beta_{3}+\alpha_{13}^{3}\beta_{2}+\alpha_{23}^{3}\beta_{1}$\\ $+ \alpha_{1+n}\beta_{2}\beta_{3}+\alpha_{2+n}\beta_{1}\beta_{3}+\alpha_{3+n}\beta_{1}\beta_{2}$\end{tabular}                                                       \\ \hline
			$\vdots$                             & $\vdots$                 & $\vdots$                & $\vdots$                                                                                                                    \\ \hline
			$2n + 3{n \choose 2}+{n\choose 3}$   & $\alpha_{n-2\,n-1\,n}^1$ & $\beta_{n-2\,n-1\,n}^1$ & \begin{tabular}[c]{@{}l@{}}$\alpha_{n-2\,n-1\,n}^1+\beta_{n-2\,n-1\,n}^1 +  \alpha_{n-2\,n-1}^{3}\beta_{n}$\\ $+\alpha_{n-2\,n}^{3}\beta_{n-1}+\alpha_{n-1\,n}^{3}\beta_{n-2}$\\ $+\alpha_{2n-2}\beta_{n-1}\beta_{n}+\alpha_{2n-1}\beta_{n-2}\beta_{n}$\\ $+\alpha_{2n}\beta_{n-2}\beta_{n-1}$\end{tabular}                                                                                                                                                                                                                                              \\ \hline
			$2n + 3{n \choose 2}+{n\choose 3}+1$ & $\alpha_{123}^2$         & $\beta_{123}^2$         & \begin{tabular}[c]{@{}l@{}}$\alpha_{123}^2+\beta_{123}^2 +  \alpha_{12}^{3}\beta_{3+n}+\alpha_{13}^{3}\beta_{2+n}$\\ $+\alpha_{23}^{3}\beta_{1+n}+\alpha_{1+n}\alpha_{2+n}\beta_{3}+\alpha_{1+n}\alpha_{3+n}\beta_{2}$\\ $+\alpha_{2+n}\alpha_{3+n}\beta_{1} + \alpha_{1+n}(\beta_{2}\beta_{3+n} + \beta_{2+n}\beta_{3})$\\ $+\alpha_{2+n}(\beta_{1}\beta_{3+n} + \beta_{1+n}\beta_{3}) $\\ $ + \alpha_{3+n}(\beta_{1}\beta_{2+n} + \beta_{1+n}\beta_{2})$\end{tabular}                                                                                   \\ \hline
			$\vdots$                             & $\vdots$                 & $\vdots$                & $\vdots$                                                                                                                                                                               \\ \hline
			$2n + 3{n \choose 2}+2{n\choose 3}$  & $\alpha_{n-2\,n-1\,n}^2$ & $\beta_{n-2\,n-1\,n}^2$ & \begin{tabular}[c]{@{}l@{}}$\alpha_{n-2\,n-1\,n}^2+\beta_{n-2\,n-1\,n}^2 +  \alpha_{n-2\,n-1}^{3}\beta_{2n}$\\ $+\alpha_{n-2\,n}^{3}\beta_{2n-1}+\alpha_{n-1\,n}^{3}\beta_{2n-2}$\\ $+\alpha_{2n-2}\alpha_{2n-1}\beta_{n
				}
				$\\ $+\alpha_{2n-2}\alpha_{2n}\beta_{n-1}+\alpha_{2n-1}\alpha_{2n}\beta_{n-2} $\\ $+ \alpha_{2n-2}(\beta_{n-1}\beta_{2n} + \beta_{2n-1}\beta_{n})$\\ $+\alpha_{2n-1}(\beta_{n-2}\beta_{2n} + \beta_{2n-2}\beta_{n}) $\\ $ +\alpha_{2n}(\beta_{n-2}\beta_{2n-1} + \beta_{2n-2}\beta_{n-1})$\end{tabular} \\ \hline
		\end{tabular}
		\caption{Multiplication in $\mathbb{P}_n$}
		\label{tab:mult.pn}
	\end{table}


	\begin{table}[]
		\begin{tabular}{|l|l|l}
			\cline{1-2}
			& $(\overline{\alpha}\cdot \overline{\beta})\cdot \overline{\gamma}$                    &  \\ 
			
			\cline{1-2}
			$1$         &       $\alpha_1+ \beta_1 + \gamma_1$    &  \\ 
			
			\cline{1-2}
			
			$\vdots$                             & $\vdots$ \\ 
			
			\cline{1-2}
			$n$         &  $\alpha_n+ \beta_n + \gamma_n$         &  \\ 
			
			\cline{1-2}
			$n + 1$         &   $\alpha_{n+1}+ \beta_{n+1} + \gamma_{n+1}$         &  \\ 
			
			\cline{1-2}
			
			$\vdots$                             & $\vdots$ \\ 
			
			\cline{1-2}
			$2n$         &   $\alpha_{2n}+\beta_{2n} + \gamma_{2n}$       &  \\ 
			
			\cline{1-2}
			$2n+1$         &   $\alpha_{12}^1+\beta_{12}^1 + \alpha_{2}\beta_{1} + \gamma_{12}^1  + \alpha_{2}\gamma_{1} + \beta_{2}\gamma_{1}$        &  \\ 
			
			\cline{1-2}
			
			$\vdots$                             & $\vdots$ \\ 
			
			\cline{1-2}   
			
			$2n + {n \choose 2}$                 & \begin{tabular}[c]{@{}l@{}}$ \alpha_{n-1\, n}^1 + \beta_{n-1\,n}^1 + \alpha_{n} \beta_{n-1}$\\ $ + \gamma_{n-1\,n}^1 + \alpha_{n} \gamma_{n-1} + \beta_n \gamma_{n-1}$\end{tabular}                                                                                                                                                                                                                                                                                                      &  \\ \cline{1-2}
			$2n + {n \choose 2} +1$              & \begin{tabular}[c]{@{}l@{}}$\alpha_{12}^2 + \beta_{12}^2 + \alpha_{2+n}\beta_{1+n} + \gamma_{12}^2$\\ $ + \alpha_{n+2} \gamma_{n+1} + \beta_{n+2} \gamma_{n+1}$\end{tabular}                                                                                                                                                                                                                                                                                                                                                                &  \\ \cline{1-2}
			$\vdots$                             & $\vdots$ \\ \cline{1-2}
			$2n + 2{n \choose 2}$                 & \begin{tabular}[c]{@{}l@{}}$\alpha_{n-1\,n}^2 + \beta_{n-1\,n}^2 + \alpha_{2n} \beta_{2n-1}$\\ $ + \gamma_{n-1\,n}^2 + \alpha_{2n} \gamma_{2n-1} + \beta_{2n} \gamma_{2n-1} $\end{tabular}                                                                                                                                                                                                                                                                                                                                                                                                                                    &  \\ \cline{1-2}
			
			$2n + 2{n \choose 2}+1$              & \begin{tabular}[c]{@{}l@{}}$\alpha_{12}^3 + \beta_{12}^3 + \alpha_{1+n} \beta_2 + \alpha_{2+n} \beta_1$\\ $ + \gamma_{12}^3 + \alpha_{n+1} \gamma_2 + \beta_{n+1} \gamma_2 + \alpha_{n+2} \gamma_1 + \beta_{n+2} \gamma_1$
			\end{tabular}                                                                                                                                                                                                                                                         &  \\ \cline{1-2}
			$\vdots$                             & $\vdots$ \\ \cline{1-2}
			$2n + 3{n \choose 2}$                &                         
			\begin{tabular}[c]{@{}l@{}} $\alpha_{n-1\,n}^3+\beta_{n-1\,n}^3 +\alpha_{2n-1}\beta_{n}$    \\ $+\alpha_{2n}\beta_{n-1}  +\gamma_{n-1\,n}^3 +\alpha_{2n-1}\gamma_{n}  $\\ 
				$+ \beta_{2n-1}\gamma_{n}   +\alpha_{2n}\gamma_{n-1} +\beta_{2n}\gamma_{n-1}$\\ \end{tabular}
			
			&  \\ \cline{1-2}
			
			$2n + 3{n \choose 2}+1$              & \begin{tabular}[c]{@{}l@{}}$\alpha_{123}^1 + \beta_{123}^1+\alpha_{12}^3 \beta_3 + \alpha_{13}^2 \beta_2 + \alpha_{23}^3 \beta_1$\\ $ + \alpha_{1+n} \beta_2 \beta_3 + \alpha_{2+n} \beta_1 \beta_3 + \alpha_{3+n} \beta_1 \beta_2 + \gamma_{123}^1$\\ $+(\alpha_{12}^3 + \beta_{12}^3 + \alpha_{1+n}\beta_2 + \alpha_{2+n}\beta_1) \gamma_3 + (\alpha_{13}^3$\\ $ + \beta_{13}^3 + \alpha_{1+n} \beta_3 + \alpha_{3+n}\beta_1) \gamma_2$\\ $+(\alpha_{23}^3 + \beta_{23}^3 + \alpha_{2+n} \beta_3 + \alpha_{3+n} \beta_2) \gamma_1$\\ $ + \alpha_{n+1} \gamma_2 \gamma_3 + \beta_{n+1} \gamma_2 \gamma_3 + \alpha_{n+2} \gamma_1 \gamma_3 + \beta_{n+2} \gamma_1 \gamma_3$\\ $+ \alpha_{n+3} \gamma_1 \gamma_2 + \beta_{n+3} \gamma_1 \gamma_2$\end{tabular}                                                                                                                                              &  \\ \cline{1-2}
			$\vdots$                             & $\vdots$ \\ \cline{1-2}
			$2n + 3{n \choose 2}+{n\choose 3}$   & \begin{tabular}[c]{@{}l@{}}$\alpha_{n-2\,n-1\,n}^1 + \beta_{n-2\,n-1\,n}^1 + \alpha_{n-2\,n-1}^3 \beta_n $\\ $ + \alpha_{n-2\,n}^3 \beta_{n-1} + \alpha_{n-1\,n}^3 \beta_{n-2} + \alpha_{n-2} \beta_{n-1} \beta_{n}$\\ $ + \alpha_{2n - 1} \beta_{n-2} \beta_{n} + \alpha_{2n} \beta_{n-2} \beta_{n-1} + \gamma_{n-2\,n-1\,n}^1 $\\ $+(\alpha_{n-2\,n-1}^3 + \beta_{n-2\,n-1}^3 + \alpha_{2n-2}\beta_{n-1} + \alpha_{2n-1} \beta_{n-2})\gamma_n$\\ $ +(\alpha_{n-2\,n}^3 + \beta_{n-2\,n}^3 + \alpha_{2n-2}\beta_n + \alpha_{2n}\beta_{n-2})\gamma_{n-1}$\\ $+ (\alpha_{n-1\,n}^3 + \beta_{n-1\,n}^3 + \alpha_{2n-1}\beta_n + \alpha_{2n}\beta_{n-1})\gamma_{n-2}$\\ $+(\alpha_{2n-2} + \beta_{2n-2}) \gamma_{n-1} \gamma_n + (\alpha_{2n-1} + \beta_{2n-1}) \gamma_{n-2} \gamma_n$\\ $ + (\alpha_{2n} + \beta_{2n}) \gamma_{n-2} \gamma_{n-1} $\end{tabular}                                                                                                                         &  \\ \cline{1-2}
		\end{tabular}\\
		
		\caption{Multiplication in $\mathbb{P}_n$: general formulas for the product of three elements (Part I).}
		\label{tab:assoc.pn.parte1}
		
	\end{table}
	
	\begin{table}[]
		\begin{tabular}{|l|l|l}
			\cline{1-2}
			& $(\overline{\alpha}\cdot \overline{\beta})\cdot \overline{\gamma}$                                                          &  \\ \cline{1-2}
			$2n + 3{n \choose 2}+{n\choose 3}+1$ & \begin{tabular}[c]{@{}l@{}}$\alpha_{123}^2 + \beta_{123}^2 + \alpha_{12}^3 \beta_{3+n} + \alpha_{13}^2 \beta_{2+n} + \alpha_{23}^3 \beta_{1+n}$\\ $ + \alpha_{1+n} \alpha_{2+n}\beta_3 + \alpha_{1+n}\alpha_{3+n}\beta_2 $\\ $+ \alpha_{2+n} \alpha_{3+n} \beta_1 + \alpha_{1+n}(\beta_2 \beta_{3+n} + \beta_{2+n} \beta_{3})$\\ $ + \alpha_{2+n}(\beta_1 \beta_{3+n} + \beta_{1+n} \beta_3) + \alpha_{3+n} (\beta_1 \beta_{2+n} + \beta_{1+n}\beta_2)$\\ $ + \gamma_{123}^2 +(\alpha_{12}^3 + \beta_{12}^3 + \alpha_{1+n}\beta_2 + \alpha_{2+n}\beta_1) \gamma_{3+n}$\\ $ + (\alpha_{13}^3 + \beta_{13}^3 + \alpha_{1+n} \beta_3 + \alpha_{3+n} \beta_1) \gamma_{2+n}$\\ $ +(\alpha_{23}^3 + \beta_{23}^3 + \alpha_{2+n}\beta_3 + \alpha_{3+n}\beta_2) \gamma_{1+n} + (\alpha_{n+1} $\\ $+ \beta_{n+1})(\alpha_{n+2} + \beta_{n+2)}\gamma_3 +(\alpha_{n+1} + \beta_{n+1})(\alpha_{n+3} $\\ $+ \beta_{n+3)}\gamma_2 + (\alpha_{n+2} + \beta_{n+2})(\alpha_{n+3} + \beta_{n+3})\gamma_1 +(\alpha_{n+1}$\\ $ + \beta_{n+1})(\gamma_2 \gamma_{3+n} + \gamma_{2+n}\gamma_3) + (\alpha_{2+n}$\\ $ + \beta_{2+n})(\gamma_1 \gamma_{3+n} + \gamma_{1+n}\gamma_3)$\\ $+ (\alpha_{3+n} + \beta_{3+n})(\gamma_1 \gamma_{2+n} + \gamma_{1+n}\gamma_2)$ \end{tabular}                                                                                              &  \\ \cline{1-2}
			$\vdots$                             & $\vdots$ \\ \cline{1-2}
			$2n + 3{n \choose 2}+2{n\choose 3}$  & \begin{tabular}[c]{@{}l@{}}$ \alpha_{(n-2)(n-1)n}^2 + \beta_{(n-2)(n-1)n}^2 + \alpha_{(n-2)(n-1)}^3 \beta_{2n} + \alpha_{(n-2)n}^3 \beta_{2n-1}$\\ $ + \alpha_{(n-1)n}^3 \beta_{2n-2} + \alpha_{2n-2}\alpha_{2n-1}\beta_{n} + \alpha_{2n-2}\alpha_{2n}\beta_{n-1}$\\ $ + \alpha_{2n-1}\alpha_{2n}\beta_{n-2} + \alpha_{2n-2}(\beta_{n-1}\beta_{2n} + \beta_{2n-1}\beta_{n}) $\\ $+ \alpha_{2n-1}(\beta_{n-2}\beta_{2n} + \beta_{2n-2}\beta_{n}) + \alpha_{2n}(\beta_{n-2}\beta_{2n-1} $\\ $+ \beta_{2n-2}\beta_{n-1}) + \gamma_{(n-2)(n-1)n}^2$ \\ $+ \alpha_{(n-2)(n-1)}^3 \gamma_{2n} + \beta_{(n-2)(n-1)}^3 \gamma_{2n} $\\ $+ \alpha_{(n-2)n}^3 \gamma_{2n-1} + \beta_{(n-2)n}^3 \gamma_{2n-1} + \alpha_{(n-1)n}^3 \gamma_{2n-2}$\\ $ + \beta_{(n-1)n}^3 \gamma_{2n-2} + \alpha_{2n-2} \beta_{n-1}\gamma_{2n} + \alpha_{2n-1} \beta_{n-2} \gamma_{2n}$\\ $ + \alpha_{2n-2}\beta_{n}\gamma_{2n-1} + \alpha_{2n} \beta_{n-2} \gamma_{2n-1} $\\ $+ \alpha_{2n-1}\beta_{n}\gamma_{2n-2} + \alpha_{2n} \beta_{n-1}\gamma_{2n-2}$ \\ $+ (\alpha_{2n-2} + \beta_{2n-2})(\alpha_{2n-1}$\\ $ + \beta_{2n-1})\gamma_{n} + (\alpha_{2n-2} + \beta_{2n-2})(\alpha_{2n} + \beta_{2n})\gamma_{n-1} $\\ $+ (\alpha_{2n-1} + \beta_{2n-1})(\alpha_{2n} + \beta_{2n})\gamma_{n-2} + (\alpha_{2n-2} $\\ $+ \beta_{2n-2})(\gamma_{n-1}\gamma_{2n} + \gamma_{2n-1}\gamma_{n}) + (\alpha_{2n-1}$\\ $ + \beta_{2n-1})(\gamma_{n-2}\gamma_{2n} + \gamma_{2n-2}\gamma_{n}) $\\ $+(\alpha_{2n} + \beta_{2n})(\gamma_{n-2}\gamma_{2n-1} + \gamma_{2n-2}\gamma_{n-1}) $ \end{tabular} &  \\ \cline{1-2}
		\end{tabular}
		\caption{Multiplication in $\mathbb{P}_n$: general formulas for the product of three elements (Part II).}
		\label{tab:assoc.pn.parte2}
	\end{table}

	
	
\end{document}